\newtheorem{thm}{Theorem}[section]
\newtheorem{lem}[thm]{Lemma}
\newtheorem{cor}[thm]{Corollary}
\theoremstyle{definition}
\newtheorem{defin}[thm]{Definition}
\newtheorem{rmk}[thm]{Remark}
\DeclareMathOperator{\spt}{spt}
\DeclareMathOperator{\Id}{Id}
\renewcommand{\epsilon}{\varepsilon}
\newcommand{\R}{\mathbb{R}}
\DeclareMathOperator*{\affdim}{affdim}
\newcommand{\Leb}[1]{\lvert {#1}\rvert_{\mathcal{L}}}
\def\Omegabar{\bar{\Omega}}
\def\xbar{\bar{x}}
\def\Dbar{\bar{D}}
\def\pbar{\bar{p}}
\def\xhat{{x}}
\def\phat{{p}}
\def\xbarhat{{\xbar}}
\def\pbarhat{{\pbar}}
\def\mountaintilde{\tilde{m}}
\def\affinevalley{\mathcal{A}^{n-k}}
\def\affinecvalley{\mathcal{A}_{x_e}^{n-k}}
\def\curvedvalley{\mathcal{A}_{\ctil}^{n-k}}
\def\sublevelset{S}
\def\halfspace{H}
\def\util{\tilde{u}}
\def\ctil{\tilde{c}}
\def\base{\mathcal{W}}
\DeclareMathOperator{\aff}{aff}
\DeclareMathOperator{\cl}{cl}
\def\cyl{\mathcal{Q}}
\DeclareMathOperator{\interior}{int}
\def\cone{\mathcal{K}}
\def\ebar{\bar{e}}
\def\gbar{\bar {g}}
\def\conebar{\bar{\mathcal{\cone}}}
\def\cylbar{\bar{\cyl}}
\def\M{M}
\def\Mbar{\bar{M}}
\def\E{E}
\def\Ebar{\bar{E}}
\newcommand{\constOne}{\beta}
\newcommand{\constThree}{C_0}
\newcommand{\constFour}{C_1}
\newcommand{\coord}[2]{\left[#1\right]_{#2}}
\newcommand{\Omegacoord}[1]{\coord{\Omega}{#1}}
\newcommand{\Omegabarcoord}[1]{\coord{\Omegabar}{#1}}
\newcommand{\subdiffcoord}[1]{\coord{\partial_cu({#1})}{#1}}
\newcommand{\cExp}[2]{exp^c_{#1}({#2})}
\newcommand{\ctilExp}[2]{exp^{\ctil}_{#1}({#2})}
\newcommand{\MTWcoord}[4]{-(c_{#1#2, \bar{p}\bar{q}}-c_{#1#2, \bar{r}}c^{\bar{r}, s}c_{s, \bar{p}\bar{q}})c^{\bar{p}, #3}c^{\bar{q}, #4}}
\def\subzero{\sublevelset_0}
\def\subzerocoord{\coord{\subzero}{\xbar_0}}
\def\subzerohat{{\sublevelset}_0}
\def\subzerohatcoord{\coord{\subzerohat}{\xbarhat_0}}
\newcommand{\euclidean}[2]{{#1}\cdot{#2}}
\newcommand{\surface}[2][n-1]{\mathcal{H}^{#1}{\left(#2\right)}}
\DeclareMathOperator{\conv}{ conv}
\author{Young-Heon Kim\thanks{Department of Mathematics, University of British Columbia, Vancouver, Canada. Email: yhkim@math.ubc.ca} and Jun Kitagawa\thanks{Department of Mathematics, University of British Columbia, Vancouver, Canada,
and
Pacific Institute for Mathematical Sciences, Vancouver, Canada. Email: kitagawa@math.ubc.ca}}
\title{On the degeneracy of optimal transportation
\thanks{
Y.H.Kim's research is partially supported by 
Natural Sciences and Engineering
Research Council of Canada Grants 371642-09 as well as the  Sloan Research Fellowship 2012--2014.  \ Any opinions, findings and conclusions or recommendations
expressed in this material are those of authors and do not reflect the views
%neither of the United States' National Science Foundation nor
 of the
Natural Sciences and Engineering Research Council of Canada or of the  Alfred P. Sloan Foundation.\break
J. Kitagawa is partially supported by a Pacific Institute for the Mathematical Sciences Postdoctoral Fellowship.
\copyright 2012 by the authors.
}}
\begin{document}
\maketitle

\begin{abstract}
We extend a dimensional upper bound on how much an optimal transport map can degenerate for the quadratic transportation cost, originally due to Caffarelli, to cost functions that satisfy the curvature condition of Ma, Trudinger, and Wang. 
\end{abstract}

\section{Introduction}\label{section: introduction}

The present paper addresses how much an optimal transport map can degenerate in terms of the dimension of certain sets. In optimal transport theory, one considers two probability measures $\rho$ and $\bar \rho$, on domains $\Omega$ and $\bar \Omega$, respectively. There is a given transportation cost function $c: \Omega \times \bar \Omega \to \R$. One is interested in the optimal mapping $T$, a measurable map that minimizes the total transportation cost
\begin{align*}
\int_{\Omega} c(x, F(x)) d\rho (x)
\end{align*}
among all $F: \Omega \to \bar \Omega$ with $F_\#\rho = \bar \rho$.  For certain type of cost functions, including the distance squared cost on Riemannian manifolds, the existence and uniqueness of $T$,  is well established (see \cite{Bre91, Caf96, GM96, McC01, MTW05} ), and it is well known that the map $T$ is associated with a potential function, say $u$, called the \emph{Brenier solution}, and the map $T$ can be found from the (almost everywhere defined) gradient of $u$. 
In particular, 
if $c(x, \xbar) = -\euclidean{x}{\xbar}$ on $\Omega \times \bar \Omega \in \R^n \times \R^n$, then we have  $\nabla u (x)$ for almost every $x$, where $u$ is a convex function. This function $u$ solves the classical Monge-Amp\`ere equation (at least in an appropriate weak sense),
\begin{align*}
 \det(D^2 u(x)) = \frac{\rho(x)}{\bar \rho(\nabla u(x))}.
\end{align*}

Under appropriate geometric conditions, especially the Ma-Trudinger-Wang condition of the cost function and $c$-convexity of the domains, and when the source and target measures $\rho$ and $\bar \rho$ are bounded above and below, the optimal map $T$ is continuous, and $u$ is differentiable: see \cite{Urb97, Del91, Caf90, Caf92, Caf92a, MTW05, TW09, Loe09, Liu09, LT10, LTW10, FKM11}.  These geometric conditions are in fact necessary for regularity \cite{MTW05, Loe09}. Also, without both bounds on $\rho$ and $\bar \rho$, regularity can no longer be guaranteed: see \cite{Wan95}. 

 In this paper, our main interest is in the case when the map $T$ is not continuous, or equivalently when the potential function $u$ is not differentiable. We use the affine dimension of the subdifferential $\partial u$, to measure the degeneracy of the map $T$. Roughly speaking, this dimension measures how many directions the function $u$ will be non-differentiable, equivalently,  how much the optimal map $T$ can split a point and spread its image in the target domain, and thus how strong the discontinuity of $T$ can be. See Definitions~\ref{def:subdiff} and~\ref{def: c-subdiff} in Section~\ref{section: setup}.

%
%The regularity theory of the 
%It is a well known result by Caffarelli (see~\cite{Caf92}) that if $\phi$ is a globally Lipschitz convex function whose Monge-Amp\'{e}re measure has both upper and lower (strictly positive) bounds on some domain, it must be strictly convex on the interior of the domain.

%However, there is another related result under a weaker hypothesis. 
For the Euclidean transportation cost $c(x, \xbar) = -\euclidean{x}{\xbar}$ %\marginpar{$(x,\xbar)$ is a bad notation here}
 on $\R^n$,   it is known due to Caffarelli~\cite{Caf93} that if the source measure $\rho$ is bounded from above on $\Omega$ and the target measure $\bar\rho$ is bounded from below on its support $\spt \nu$, %$\Omegabar$ (\BlueComment{should be just say the supports instead?}), 
then for each point $x$ where $\partial u(x) \cap (\spt {\nu})^{\interior} \ne \emptyset$,  
% $u$ is a convex function whose Monge-Amp\'{e}re measure has a strictly positive \emph{lower} bound on some domain, 
the affine dimension of $\partial u (x)$ is \emph{strictly} less than $\frac{n}{2}$. A Pogorelov type counterexample, also presented in~\cite{Caf93}, shows that this bound is sharp. What Caffarelli actually showed is the same dimensional bound for the contact set, i.e. the set of points having the same image by the optimal map, which is equivalent to the bound on the subdifferential by reversing the role of the target and the source. 
% By reversing the roles of the domain of definition of $u$ and the image of this domain under the subdifferential of $u$, this result can be stated as an upper bound on the affine dimension of the subdifferential of $u$ at any point, provided there is an \emph{upper} bound on the Monge-Amp\'{e}re measure of $u$.
%the affine dimension of any contact set intersecting the interior of this domain is \emph{strictly} less than $\frac{n}{2}$. A Pogorelov type counterexample, also presented in~\cite{Caf93}, shows that this bound is sharp. By reversing the roles of the domain of definition of $u$ and the image of this domain under the subdifferential of $u$, this result can be stated as an upper bound on the affine dimension of the subdifferential of $u$ at any point, provided there is an \emph{upper} bound on the Monge-Amp\'{e}re measure of $u$.
%

We extend the dimension estimate of Caffarelli to the case where the cost function satisfies the Ma-Trudinger-Wang~\eqref{A3w}  condition \cite{MTW05, TW09}. %This condition is necessary condition for regularity of optimal maps \cite{Loeper}, for a certain class of cost functions where the existence and uniqueness  is known. %It is shown to be necessary condition for regularity of optimal maps, as shown by Loeper \cite{Loeper}.
%Also, under this condition,  regularity of optimal maps is verified, extending the classical result of Caffarelli on the Monge-Amp\`ere equation \cite{Trudinger Wang, Loeper, Figalli Kim McCann}.  
The main theorem we prove in this paper is the following. The relevant definitions and conditions concerning $c$-convex geometry are given in Section~\ref{section: setup} below. The method presented here is a geometric approach, and it also provides a new proof for the case of the Euclidean quadratic cost function.
 \begin{thm}\label{thm: A3w subdifferential bound}
Consider two open sets $\Omega\subset \M$ and $\Omegabar\subset\Mbar$ in Riemannian manifolds $\M$ and $\Mbar$, and fix two probability measures $\mu$ on $\Omega$, and $\nu$ on $\Omegabar$ with bounded supports $\spt{\mu}$ and  $\spt{\nu}$. Assume $\mu$ is absolutely continuous with respect to Lebesque measure. 
%, such that their supports $\spt{\mu}$ and $\spt{\nu}$ have nonempty interior. 
%}\BlueComment{Do we need this last condition? I'm not sure if $\spt{\mu}$ has to have that, and because of the condition below if $\spt{\nu}$ had empty interior the theorem would be vacuous so it's still covered.} %, with $\spt{\nu}=\Omegabar^{\cl}$. }
%(Notice that we do not assume $\spt{\mu} = \Omega^{\cl}$.) 
Suppose that $c$ satisfies\eqref{A0}--\eqref{A2} and~\eqref{A3w} and that $\Omega$ and $\Omegabar$  are $c$-convex with respect to each other. Further assume that %$\bar \Omega$ is $c$-convex with respect to $\Omega$, and 
$\Omega$ is strongly $c$-convex with respect to $\spt{\nu}$. Finally, let $u$ be the Brenier solution of the optimal transportation problem with cost $c$ from $\mu$ to $\nu$, which  satisfies 
 \begin{equation}\label{eqn: upper bound}
\Leb{\partial_cu(E) \cap \spt \nu }\leq \Lambda \Leb{E} 
%\Leb{\partial_c u(E)}\leq \Lambda \Leb{E}
\end{equation}
for any measurable $E\subset \Omega$ and some constant $\Lambda >0$.

Then, for any $x\in \Omega$ such that $\partial_c u(x)\cap (\spt{\nu})^{\interior} \neq \emptyset$
%$\partial_c u(x)\cap \Omegabar\neq \emptyset$, 
\begin{equation*}
%\affdim{\left(\subdiffcoord{x}\right)}<\frac{n}{2}.
\affdim{\left(\partial u (x)\right)}<\frac{n}{2}.
\end{equation*}
where $\affdim$ is the affine dimension of a convex set.
\end{thm}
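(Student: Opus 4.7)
The plan is to argue by contradiction, adapting Caffarelli's Euclidean section/duality argument to the $c$-convex geometry afforded by the MTW condition. Suppose for contradiction that $x_0\in\Omega$ admits some $\bar y_0\in\partial_c u(x_0)\cap(\spt\nu)^{\interior}$ and that the $k$-dimensional affine piece $V\subset\partial u(x_0)$ has dimension $k\geq n/2$. I would first pass to the local chart $\bar y\mapsto -D_x c(x_0,\bar y)$ near $\bar y_0$; by \eqref{A1}--\eqref{A2} this is a diffeomorphism, and by Loeper's theorem (a consequence of \eqref{A3w} together with the $c$-convexity of the domains) it sends $\partial_c u(x_0)$ to a convex set. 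Subtracting the $c$-support function $-c(\cdot,\bar y_0)$ from $u$ turns $u$ into a semiconvex function $\tilde u$ near $x_0$ whose classical subdifferential at $x_0$ coincides with $\partial u(x_0)\supset V$, so $V$ is realized as a $k$-dimensional convex subset of $\spt\nu$ in these dual coordinates.

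For $h>0$ small, I would then introduce the $c$-section
\begin{equation*}
\sublevelset_h := \{x\in\Omega : u(x)+c(x,\bar y_0) < u(x_0)+c(x_0,\bar y_0) + h\}.
\end{equation*}
Strong $c$-convexity of $\Omega$ with respect to $\spt\nu$ keeps $\sublevelset_h$ relatively compact in $\Omega$ and $c$-convex for $h$ below a threshold. A normalizing $c$-affine transformation plus John's lemma yields an ellipsoid $\mathcal{E}_h$ comparable to $\sublevelset_h$, with semi-axes $a_1(h),\ldots,a_n(h)$. The Loeper maximum principle applied to the section and its $c$-subdifferential image forces $\partial_c u(\sublevelset_h)\cap\spt\nu$ to contain a bi-Lipschitz image of an ellipsoid with semi-axes comparable to $h/a_i(h)$, giving the scale-invariant duality
\begin{equation*}
\Leb{\partial_c u(\sublevelset_h)\cap\spt\nu} \gtrsim \frac{h^n}{\Leb{\sublevelset_h}}.
\end{equation*}
Combined with hypothesis \eqref{eqn: upper bound} this yields $\Leb{\sublevelset_h}\gtrsim h^{n/2}$.

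On the other hand, the $k$-dimensional piece $V$ supplies $k$ independent $c$-support inequalities for $u$ at $x_0$, forcing $\tilde u-\tilde u(x_0)$ to grow at least linearly along $k$ independent directions; consequently $k$ of the axes $a_i(h)$ are $\lesssim h$, whence $\Leb{\sublevelset_h}\lesssim h^k R(h)^{n-k}$ where $R(h)$ bounds the remaining transverse semi-axes. Comparing with the lower bound gives $R(h)^{n-k}\gtrsim h^{n/2-k}$, which for $k>n/2$ blows up as $h\downarrow 0$, contradicting $\sublevelset_h\subset\Omega$. For the borderline case $k=n/2$ (needed for the strict inequality in the theorem) I would iterate the construction: treat a rescaling of $\sublevelset_h$ as a new normalized domain, extract an analogous $k$-dimensional piece in the subdifferential at a renormalized point, and pass to a limit to obtain a violating configuration.

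The main obstacle is the scale-invariant $c$-duality $\Leb{\sublevelset_h}\cdot\Leb{\partial_c u(\sublevelset_h)\cap\spt\nu}\gtrsim h^n$, which is a quantitative upgrade of Loeper's theorem valid uniformly as $h\downarrow 0$; getting this right requires careful interplay of \eqref{A3w}, the strong $c$-convexity hypothesis, and a well-chosen $c$-affine normalization so that the implicit constants do not degenerate with the scale. The borderline case $k=n/2$ is also delicate, since the naive volume comparison only yields a non-strict bound and strictness must be obtained via a limiting/iteration argument.
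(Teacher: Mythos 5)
Your plan is genuinely different from the paper's. The paper's argument does not use sections $\sublevelset_h$, John ellipsoids, or any scale-invariant duality estimate: instead, after shifting to a boundary point $x_e$ of the contact set $\sublevelset_0$ (Lemma~\ref{lem: choosing a good point}), it builds explicit cone-shaped sets $\cyl_d$ and $\cylbar_d$ whose opening angles depend on the parameter $d$, checks the three-case boundary inequality directly via Loeper's maximum principle and quantitative Taylor estimates, and then feeds the explicit volumes into~\eqref{eqn: upper bound}. The whole point of that construction is to avoid the estimate your plan pivots on.

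That estimate is the genuine gap in your proposal. The scale-invariant duality
\begin{equation*}
\Leb{\sublevelset_h}\cdot\Leb{\partial_c u(\sublevelset_h)\cap\spt\nu}\gtrsim h^n
\end{equation*}
(or equivalently, that $\partial_c u(\sublevelset_h)\cap\spt\nu$ contains an ellipsoid with semi-axes $\sim h/a_i(h)$) is a quantitative Aleksandrov-type lower bound, and this does \emph{not} follow from Loeper's maximum principle plus $c$-convexity of sections. In the Euclidean case it can be proved by the cone comparison/divergence argument, and under~\eqref{NNCC} the relevant dual Aleksandrov bounds were established in the Figalli--Kim--McCann framework; but under~\eqref{A3w} alone no such estimate is available in the setting of this paper. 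This is exactly the obstruction the introduction singles out: Caffarelli's proof ``relies on combining the arithmetic--geometric mean inequality with the divergence theorem,'' and under~\eqref{A3w} ``there seems to be no easy way to use the divergence theorem to continue the proof.'' Asserting the duality as a consequence of Loeper's theorem therefore skips precisely the step the paper's construction was designed to circumvent. Until you can either prove the Aleksandrov lower bound under~\eqref{A3w} or replace it, the chain from~\eqref{eqn: upper bound} to $\Leb{\sublevelset_h}\gtrsim h^{n/2}$ does not close.

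A secondary issue: the theorem's conclusion is the strict inequality $\affdim(\partial u(x))<n/2$, so the borderline $k=n/2$ is essential, not peripheral. You only gesture at an iteration/blow-up scheme there, and making that rigorous (uniform constants across rescalings of the $c$-normalization, compactness of the limiting configuration, and a reason the limit is still a Brenier solution satisfying~\eqref{eqn: upper bound}) is itself substantial and not supplied. The paper gets strictness ``for free'' from its construction, because the volume comparison produces $(f(d)/d)^{n-2k}\lesssim d$, which already yields a contradiction at $n-2k=0$ as $d\to 0$; your approach does not have an analogous built-in margin.
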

Note that the condition $\partial_c u(x)\cap (\spt{\nu})^{\interior} \neq \emptyset$ in the above theorem is necessary, since otherwise we have an easy example (see for example, \cite{CY09}) in the Euclidean case $c(x, \xbar) = -\euclidean{x}{\xbar}$, where   $\spt \mu = \{ x \in \R^2 \mid |x| \le 1\}$, $\spt \nu =\{ x \in \R^2 \mid 1\leq |x| \leq 2\}$, and the Brenier solution $u$ satisfies  $\partial u (0) = \{ x \mid |x| \leq1\}$, thus, $\affdim(\partial u (0))=2$. 
%\marginpar{dim $n$ or $2$?}

We also comment here on some variants of the condition~\eqref{A3w}. The first is~\eqref{A3s}, which is for example, satisfied by distance squared cost functions on the round sphere~\cite{Loe10} and its perturbation and quotients~\cite{Vil08, FR09, DG10, DG11, KM10, LV10, FRV10, FRV11, FRV12}. Another variation is~\eqref{NNCC} which is for example, satisfied by the distance squared cost on the Euclidean space and the products of round spheres~\cite{KM10} (see also~\cite{FKM10}). Both~\eqref{A3s} and~\eqref{NNCC} are strictly stronger versions of~\eqref{A3w}, and one does not imply the other.  See Remark~\ref{rmk: nncc/a3s} for the relevant definitions.

It is worth mentioning that under~\eqref{A3s} or~\eqref{NNCC}, the result in Theorem~\ref{thm: A3w subdifferential bound} is fairly easy to prove. If one assumes~\eqref{A3s}, the conditions in Theorem~\ref{thm: A3w subdifferential bound} above are enough to apply the methods in~\cite{Loe09} to obtain that the $c$-subdifferential is a singleton at each point in the domain. On the other hand, assuming~\eqref{NNCC}, by using the results of~\cite[Section 6]{FKM09} one can directly extend the method used in the proof of the main theorem in~\cite{Caf93} to obtain the desired estimate. %a more general cost function.

%It is worth mentioning that under~\eqref{A3s} or~\eqref{NNCC} (which are strengthenings of the key condition~\eqref{A3w}, see Section~\ref{section: setup} for the relevant definitions), the analogue of Caffarelli's result is fairly easy to prove. If one assumes~\eqref{A3s}, the conditions in Theorem~\ref{thm: A3w subdifferential bound} below are enough to apply the methods in~\cite{Loe09} to obtain that the $c$-subdifferential is a singleton at each point in the domain. On the other hand, assuming~\eqref{NNCC}, by using the results of~\cite[Section 6]{FKM09} one can directly extend the method used in the proof of the main theorem in~\cite{Caf93} to a more general cost function.

However, under~\eqref{A3w} alone, it is not clear how to extend Caffarelli's proof. The main difficulty lies in the fact that Caffarelli's proof relies on combining the arithmetic--geometric mean inequality with the divergence theorem. In the case of a more general cost function $c$, there is a corresponding positive definite matrix that the arithmetic--geometric mean can be applied to, but  there seems to be no easy way to use the divergence theorem to continue the proof. 

The present paper is organized as follows: 
In Section~\ref{section: setup} we give various definitions and some standard conditions related to $c$-convex geometry and the optimal transportation problem. In Section~\ref{section: euclidean} we first provide a proof of the main result, but with the Euclidean cost function $c(x, \xbar)= -\euclidean{x}{\xbar}$ (Theorem~\ref{thm: euclidean thm}). This allows us to highlight the geometric framework of the proof without the technical difficulties posed by a nonlinear cost function. In Section~\ref{section: general c}, we provide the preliminary setup for the proof of Theorem~\ref{thm: A3w subdifferential bound}  for a general cost function $c$, and in the following Section~\ref{section: main proof} we give the actual proof. %we finally give the proof of  Theorem~\ref{thm: A3w subdifferential bound}.}
In Section~\ref{section: a3w example} in the Appendix,
we demonstrate an example of a cost function (which originally appeared in~\cite{TW09} and was communicated to the authors by Neil Trudinger)  that satisfies~\eqref{A3w}, but does not satisfy~\eqref{A3s} nor~\eqref{NNCC} .

%\section*{Acknowledgement}

\section{$c$-convex geometry}\label{section: setup} %{\bf \red This section needs careful revision regarding DASM, $c$-exp, etc}
We suppose that $\Omega$ and $\Omegabar$ are two bounded, open domains contained in two Riemannian manifolds $\M$ and $\Mbar$ respectively (the bar does not represent the closure of a set, we will use the superscript $E^{\cl}$ to denote this instead).  
%We point out that in typical cases $\M=\Mbar$. 
We will also use $x$ and $\xbar$ to denote points in $\Omega$ and $\Omegabar$ respectively, while $D$ and $\Dbar$ will either denote the differential of a function with respect to the $x$ or $\xbar$ variables respectively.
\begin{rmk}\label{rmk: coordinates notation}
Suppose we have fixed a coordinate basis on an $n$-dimensional vector space $V$. Then, for a point, say $p\in V$, we will use superscripts to denote the coordinate entries, and the notation $p'=(p^1,\ldots, p^k)$ and $p''=(p^{k+1},\ldots, p^n)$ to denote the first $k$ and last $n-k$ coordinate entries with respect to this basis respectively.
Moreover, by an abuse of notation, we will sometimes write $\xbar'$ and $\xbar''$ to denote the $n$-dimensional vectors $(\xbar', 0)$ and $(0, \xbar'')$. %\marginpar{maybe we can move this\\ disclamer to remark 2.1?}.   

Also, we will write $\lvert v \rvert$ and  $\euclidean{v}{w}$ %$\left( \cdot, \cdot\right)$
 to denote the standard Euclidean norm and inner product for vectors $v, w \in \R^n$. When the entries are vectors from arbitrary vector spaces (the two vectors may be contained in different spaces), it is understood that there are fixed coordinate systems (which will be clear from context) on the vector spaces, and we are identifying each vector with the point in $\R^n$ that gives its coordinate representation.

Finally, if $V$ is a vector space, the notation $\langle f,v\rangle$ will denote the action of $f\in V^*$ on the vector $v\in V$.
\end{rmk}

We now state a number of conditions, standard in the theory of optimal transportation (see~\cite{MTW05} and~\cite{TW09}, also \cite{Loe09, KM10, LV10,  FRV11} for more details on these conditions). Much of the notation that follows is similar in exposition to what is used in~\cite{Kit12} by the second author.

\noindent\underline{\textbf{Regularity of cost}}:
\begin{equation}\label{A0}\tag{A0}
c\in C^4(\Omega^{\cl}\times \Omegabar^{\cl}).
\end{equation}

\noindent\underline{\textbf{Twist}}:

$c$ satisfies condition~\eqref{A1} if the mappings
\begin{equation}
\begin{array}{rl}
\xbar & \mapsto -Dc(x_0, \xbar) \\
x & \mapsto -\Dbar c(x, \xbar_0)
\end{array} \label{A1}\tag{A1}
\end{equation}
are injective for each $x_0 \in\Omega$ and for each $\xbar_0 \in \Omegabar$.
Here, $D$ (resp. $\bar D$) denotes the differential in the $x$ (resp. $\xbar$) variable. 

We denote the inverses of the above mappings by $\cExp{x_0}{\cdot}$ and $\cExp{\xbar_0}{\cdot}$.
Notice that for the cost $c(x, \xbar) = -\euclidean{x}{\xbar}$ on $\R^n\times \R^n$, these mappings are both just the identity map.

\noindent\underline{\textbf{Nondegeneracy}}:

$c$ satisfies condition~\eqref{A2} if, for any $x\in\Omega$ and $\xbar\in\Omegabar$, the linear mapping given by
\begin{equation}
\begin{array}{rl}
-\Dbar D c(x, \xbar): T_{\xbar}\Omegabar \to T_{-Dc(x, \xbar)}\left(T^*_{x}\Omega\right)\cong T^*_x\Omega
\end{array}\label{A2}\tag{A2}
\end{equation}
is invertible
(hence, so is its adjoint mapping, $-D\Dbar c(x, \xbar): T^*_{\xbar}\Omegabar \to T_x\Omega$).

The next condition, originally introduced by Ma, Trudinger and Wang \cite{MTW05, TW09}, is crucial in the regularity theory of optimal maps and is actually a necessary condition for regularity as shown by Loeper in~\cite{Loe09}.

\noindent\underline{\textbf{MTW}}:

$c$ satisfies condition~\eqref{A3w} if, for all $x\in\Omega$, $\xbar\in\Omegabar$, and any $V\in T_{x}\Omega$ and $\eta\in T^*_{x}\Omega$ such that $\langle \eta,V\rangle=0$, we have
\begin{equation}\label{A3w}\tag{MTW}
%-\frac{\partial^4}{\partial s^2 \partial t^2}  c( \cExp{x_0}{V} , \cExp{\xbar_0}{\eta})
\MTWcoord{i}{j}{k}{l}(x, \xbar)V^iV^j\eta_k\eta_l\geq 0.
\end{equation}
Here all derivatives are with respect to fixed coordinate systems on $\M$ and $\Mbar$, with regular indices denoting coordinate derivatives of $c$ with respect to the $x$ variable, and indices with a bar above denoting coordinate derivatives with respect to the $\xbar$ derivative. Also, a pair of raised indices denotes the matrix inverse.
\begin{rmk}\label{rmk: nncc/a3s}
There are two common, stronger versions of condition~\eqref{A3w}.

We say that $c$ satisfies condition~\eqref{A3s} if there exists some $\delta_0>0$ such that for all $x\in\Omega$, $\xbar\in\Omegabar$, and any $V\in T_{x}\Omega$ and $\eta\in T^*_{x}\Omega$ such that $\langle{\eta},{V}\rangle=0$, we have
\begin{equation}\label{A3s}\tag{$\textrm{MTW}_+$}
\MTWcoord{i}{j}{k}{l}(x, \xbar)V^iV^j\eta_k\eta_l\geq \delta_0\lvert V\rvert^2\lvert\eta\rvert^2.
\end{equation}

We say that $c$ satisfies condition~\eqref{NNCC} (see \cite{KM10})  if for all $x\in\Omega$, $\xbar\in\Omegabar$, and \emph{any} $V\in T_{x}\Omega$ and $\eta\in T^*_{x}\Omega$, we have
\begin{equation}\label{NNCC}\tag{NNCC}
\MTWcoord{i}{j}{k}{l}(x, \xbar)V^iV^j\eta_k\eta_l\geq 0
\end{equation}
(note that~\eqref{NNCC} removes the ``orthogonality condition'' of $\langle\eta,V\rangle=0$).

The condition \eqref{A3w} looks complicated but, as we will see below, it leads to some elegant geometric implications: see Lemma~\ref{lem: DASM} and Corolloraries~\ref{cor:c-convex c-subdifferential} and \ref{cor: local to global}.
\end{rmk}
We now give the definition of a $c$-convex function, along with its $c$-subdifferential.
\begin{defin}\label{def: c-convex functions}
 We say that a real valued function $u$ defined on $\Omega$ is \emph{$c$-convex} if for each point $x_0\in\Omega$, there exists at least one $\xbar_0\in\Omegabar$ and $\lambda_0\in\R$ such that
 \begin{align*}
 -c(x_0, \xbar_0)+\lambda_0&=u(x_0),\\
 -c(x, \xbar_0)+\lambda_0&\leq u(x)
\end{align*}
for all $x\in\Omega$. %We say that $u$ is \emph{strictly $c$-convex} if the second inequality above is strict whenever $x\neq x_0$, and 
Any function of the form $-c(\cdot, \xbar_0)+\lambda_0$ satisfying the above relations is said to be a \emph{$c$-function that is supporting to $u$ from below at $x_0$}.
\end{defin}

\begin{defin}\label{def:subdiff}
For a semiconvex function $u$, its {\em subdifferental} at $x_0$ is defined by
\begin{align*}
 \partial u (x_0) = \{ p \in T^*_x \Omega  \mid   u(\exp_{x_0}{(v)} ) \ge u (x_0) + \langle v, p\rangle + o (|v|) \},
\end{align*}
where $\exp_{x_0}$ is the usual Riemannian exponential map at $x_0$.%for $x+p$ we used local coordinates at $x$. 
 \end{defin}

\begin{defin}\label{def: c-subdiff}
If $u$ is a $c$-convex function and $x_0\in\Omega$, we define its \emph{$c$-subdifferential at $x_0$} by 
\begin{align*}
 \partial_c u(x_0):&=\{\xbar\in\Omegabar\mid -c(\cdot, \xbar)+\lambda\text{ is a }c\text{-function}\\
 &\qquad \text{supporting to }u\text{ from below at } x_0 \text{, for some } \lambda\in\R\}.
\end{align*}
Note there is the immediate inclusion, $\partial_c u (x) \subset \cExp{x} {\partial u (x)}$ for each $x \in \Omega$. 
If $E\subset \Omega$, we write
\begin{align*}
 \partial_cu(E):=\bigcup_{x\in E}{\partial_cu(x)}.
\end{align*}
\end{defin}
\begin{rmk}\label{rmk: sets in cotangent coordinates}
Given any $x\in\Omega$ and $\xbar\in\Omegabar$, we denote $\E\subset\Omega$ and $\Ebar\subset\Omegabar$ represented in the cotangent spaces at $x$ and $\xbar$ respectively, by
\begin{align*}
\coord{\E}{\xbar}:&=-\Dbar c(\E, \xbar),\\
\coord{\Ebar}{x}:&=-D c(x,\Ebar).
\end{align*}
\end{rmk}
\begin{defin}\label{def: c-convex sets}
If $E\subset\Omega$ and $\xbar\in\Omegabar$, we say \emph{$E$ is (strongly) $c$-convex with respect to $\xbar$} if $\coord{E}{\xbar}$ is a (strongly) convex subset of $T^*_{\xbar}\Omegabar$.

If $E\subset\Omega$ and $\Ebar\subset\Omegabar$, we say \emph{$E$ is (strongly) $c$-convex with respect to $\Ebar$} if $E$ is (strongly) $c$-convex with respect to every $\xbar\in\Ebar$.

Analogous definitions hold with the roles of $x$ and $\xbar$, and $E$ and $\Ebar$ reversed.
\end{defin}

Now, suppose we have fixed a point $\xbar_0\in\Omegabar$. Then, by~\eqref{A1} and~\eqref{A2}, the map $p\mapsto \cExp{\xbar_0}{p}$ is a diffeomorphism between $\Omegacoord{\xbar_0}$ and $\Omega$. We then define the modified cost function
\begin{align}
\ctil(p, \xbar):=c(\cExp{\xbar_0}{p}, \xbar)-c(\cExp{\xbar_0}{p}, \xbar_0)\label{eqn: modified cost}
\end{align}
for $p\in\Omegacoord{\xbar_0}$ and $\xbar\in\Omegabar$. Also, given a $c$-convex function $u$ on $\Omega$, we modify it by defining 
\begin{align}
 \util(p):=u(\cExp{\xbar_0}{p})+c(\cExp{\xbar_0}{p}, \xbar_0),\label{eqn: modified u}
\end{align}
it is easy to see that $\util$ is $\ctil$-convex, and moreover
\begin{align*}
 \xbar \in \partial_c u (\cExp{\xbar_0}{p}) \Longleftrightarrow \bar x \in \partial_{\ctil} \util (p).
\end{align*}
When dealing with these modified functions (and also~\eqref{eqn: mountain tilde}) defined on cotangent spaces, we will continue to use the notation $D$ and $\Dbar$ to denote the differential with respect to the $p$ and $\pbar$ variables respectively. It will be clear from the context whether the differentiation is with respect to variables on the original domains, or on the cotangent spaces. We note here that with this convention,
\begin{align}
 -D\ctil(p, \xbar)&=\left [-\Dbar Dc(\cExp{\xbar_0}{p}, \xbar_0)\right]^{-1}(-Dc(\cExp{\xbar_0}{p}, \xbar)+Dc(\cExp{\xbar_0}{p}, \xbar_0))\label{eqn: modified differential}\\
 &\in T_{\xbar_0}\Mbar.\notag
\end{align}
These modified functions will be useful when we carry out the proof of the main theorem.
 
A key result detailing certain geometric properties of $c$-convex functions was discovered by Loeper~\cite{Loe09} for domains in $\R^n$, further developed in~\cite{TW09a,KM10, LV10, FRV11}, and extended to domains in manifolds under suitable conditions.
\begin{lem}[Loeper's maximum principle~\cite{Loe09}]\label{lem: DASM}
Suppose $c$ satisfies~\eqref{A0}--\eqref{A2} and~\eqref{A3w}.
 Also assume $\Omega$ is $c$-convex with respect to $\Omegabar$.  %\marginpar{YH: we need this assumption.}
 Fix $x_0\in\Omega$ and let $\pbar: [0,1]\to \Omegabarcoord{x_0}$ be a line segment.  Then, for any $x\in\Omega$
\begin{align*}
&-c(x, \cExp{x_0}{\pbar(t)})+c(x_0, \cExp{x_0}{\pbar(t)})\\
&\leq \max \{-c(x, \cExp{x_0}{\pbar(0)}) +c(x_0,  \cExp{x_0}{\pbar(0)}), \\
& \ \ \ \  \ \ \ \ \ \ \ \  \ -c(x, \cExp{x_0}{\pbar(1)})+c(x_0, \cExp{x_0}{\pbar(1)})\}.
\end{align*}
An analogous inequality holds with the roles of $\Omega$ and $\Omegabar$ reversed.
\end{lem}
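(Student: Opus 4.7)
The plan is to argue by contradiction and reduce the statement to the pointwise inequality \eqref{A3w} at an interior critical point of
$$f(t) := -c(x, \cExp{x_0}{\pbar(t)}) + c(x_0, \cExp{x_0}{\pbar(t)}), \qquad t\in[0,1].$$
If the desired inequality fails for some $x\in\Omega$, then $f$ attains its maximum on $[0,1]$ at an interior point $t_0\in(0,1)$, giving $f'(t_0)=0$ and $f''(t_0)\leq 0$; a small quadratic perturbation of the problem upgrades this to the strict $f''(t_0)<0$. The goal is then to show that \eqref{A3w} instead forces $f''(t_0)\geq 0$, producing a contradiction.

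To carry this out, set $q(t) := \cExp{x_0}{\pbar(t)}$ and differentiate the defining identity $-Dc(x_0, q(t))=\pbar(t)$ twice in $t$, obtaining explicit formulas for $q'(t_0)$ and $q''(t_0)$ in terms of the partial derivatives of $c$ at $(x_0, q(t_0))$ and the constant vector $\pbar'(t_0)=\pbar(1)-\pbar(0)$. Substituting these into the chain-rule expressions
$$f'(t) = \left[\Dbar c(x_0, q(t)) - \Dbar c(x, q(t))\right]\cdot q'(t),$$
$$f''(t) = \left[\Dbar^2 c(x_0, q(t)) - \Dbar^2 c(x, q(t))\right]\!\bigl(q'(t), q'(t)\bigr) + \left[\Dbar c(x_0, q(t)) - \Dbar c(x, q(t))\right]\cdot q''(t),$$
and passing to the modified cost $\ctil$ from \eqref{eqn: modified cost} with base point $\xbar_0=q(t_0)$ (via the identity \eqref{eqn: modified differential}) to linearize the $\xbar$-variable, one sees that $f''(t_0)$ is controlled by the MTW tensor
$$\MTWcoord{i}{j}{k}{l}(x_0, q(t_0))\,V^i V^j \eta_k \eta_l,$$
where $V\in T_{x_0}\Omega$ corresponds to $q'(t_0)$ under the duality of \eqref{A2}, and $\eta\in T^*_{x_0}\Omega$ encodes the displacement of $x$ from $x_0$. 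In the same flattened coordinates, $f'(t_0)=0$ translates precisely into the orthogonality $\langle\eta,V\rangle=0$, so \eqref{A3w} is directly applicable and yields the desired sign $f''(t_0)\geq 0$.

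The main obstacle is that this identification of $f''(t_0)$ with a single MTW tensor evaluation only captures infinitesimal information at $(x_0, q(t_0))$, whereas the input point $x$ may lie far from $x_0$. I would close this gap by a continuity argument along a $c$-segment in $\Omega$ from $x_0$ to $x$, provided by the $c$-convexity of $\Omega$ with respect to $\Omegabar$: consider the set of parameters $s\in[0,1]$ for which the claimed inequality holds strictly at the intermediate point $x(s)$, and use the strict infinitesimal sign from \eqref{A3w} (together with the perturbation above) to show this set is both open and closed in $[0,1]$. The second inequality in the lemma, with the roles of $\Omega$ and $\Omegabar$ reversed, follows by applying the same argument with the roles of the source and target exchanged, since \eqref{A3w} is phrased symmetrically up to the duality between tangent and cotangent variables.
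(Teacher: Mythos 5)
The paper does not prove this lemma itself; it is cited directly from Loeper~\cite{Loe09}, so there is no in-paper proof to compare against, and your sketch must be judged on its own terms. You have the right opening moves: reduce to a critical-point analysis of the one-variable function $f$, relate $f''$ at a critical point to the MTW tensor, and observe that your invocation of~\eqref{A3w} only sees the single point $(x_0,q(t_0))$ while $x$ may be far away. But the device you propose to bridge this local-to-global gap does not close. The set $\{s : \text{the inequality holds strictly at } x(s)\}$ does not contain $s=0$, since at $s=0$ both sides vanish identically; a set defined by a strict inequality is generically open rather than closed, so connectedness would require the opposite defect repaired; under weak~\eqref{A3w} (unlike~\eqref{A3s}) the infinitesimal bound at a critical point is $\geq 0$, not $>0$, so there is no ``strict infinitesimal sign'' to propagate; and the quadratic perturbation you invoke is not benign, because subtracting $\varepsilon t(1-t)$ shifts the critical-point condition $f'(t_0)=0$ --- which is precisely what encodes the orthogonality $\langle\eta,V\rangle=0$ --- so the MTW hypothesis is no longer directly applicable at the perturbed critical time. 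Finally, the $c$-segment $x(s)$ must be taken with respect to the target point $\bar x(t_0)$, but $t_0$ is not known a priori and varies with the endpoint, so ``the claimed inequality holds strictly at the intermediate point $x(s)$'' is ambiguous.

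The actual mechanism that closes the gap in Loeper's proof (and in the Trudinger--Wang and Kim--McCann variants) is not a connectedness argument but an integration of~\eqref{A3w} along the $c$-segment $x(s)$ from $x_0$ to $x$ taken with respect to $\bar x_{t_0}:=\cExp{x_0}{\pbar(t_0)}$. Writing $h(s,t):=-c(x(s),\cExp{x_0}{\pbar(t)})+c(x_0,\cExp{x_0}{\pbar(t)})$, one has $h(0,\cdot)\equiv 0$, the critical-point identity $\partial_t h(1,t_0)=0$ becomes the integral constraint $\int_0^1\partial_s\partial_t h(s,t_0)\,ds=0$, and the quantity whose sign must be determined is $\partial_t^2 h(1,t_0)=\int_0^1\partial_s\partial_t^2 h(s,t_0)\,ds$. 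One then applies~\eqref{A3w} at \emph{every} $(x(s),\bar x_{t_0})$ along the segment to control $\partial_s^2\partial_t^2 h$, and closes with an ODE-comparison argument on $\psi(s):=\partial_t h(s,t_0)$ and $\varphi(s):=\partial_t^2 h(s,t_0)$ to deduce $\varphi(1)\geq 0$. This integrated, second application of~\eqref{A3w} in the $x$-variable along the $c$-segment is the step your sketch correctly identifies as ``the main obstacle'' but does not actually supply.
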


This lemma has several important consequences:

\begin{cor}\label{cor:c-convex c-subdifferential} \cite[Theorem 3.1]{Loe09}
Suppose that $c$ satisfies~\eqref{A0}--\eqref{A2} and~\eqref{A3w}, $u$ is a $c$-convex function, and $\Omega$, $\Omegabar$ are $c$-convex with respect to each other. Then,  for any $x_0\in\Omega$, 
%\begin{align*}
%\partial_cu(x_0) 
%\end{align*}
the set $\partial_c u(x_0)$ 
is $c$-convex with respect to $x_0$.
\end{cor}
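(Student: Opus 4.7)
The plan is to take two points $\xbar_0,\xbar_1 \in \partial_c u(x_0)$ and show that every point on the $c$-segment between them (with respect to $x_0$) still lies in $\partial_c u(x_0)$. Concretely, let $\pbar(t):=(1-t)[\xbar_0]_{x_0}+t[\xbar_1]_{x_0}\in T^*_{x_0}\Omega$ for $t\in[0,1]$. Since $\partial_c u(x_0)\subset\Omegabar$ and $\Omegabar$ is $c$-convex with respect to $x_0\in\Omega$, the set $\Omegabarcoord{x_0}$ is convex, so $\pbar(t)\in\Omegabarcoord{x_0}$ and hence $\xbar_t:=\cExp{x_0}{\pbar(t)}$ is a well defined point of $\Omegabar$. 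The goal is to exhibit, for each $t$, a constant $\lambda_t\in\R$ such that $-c(\cdot,\xbar_t)+\lambda_t$ supports $u$ from below at $x_0$.

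The natural choice is to force equality at $x_0$, namely $\lambda_t:=u(x_0)+c(x_0,\xbar_t)$. With this choice, equality at $x_0$ is automatic, and the supporting inequality $-c(x,\xbar_t)+\lambda_t\leq u(x)$ for all $x\in\Omega$ becomes
\begin{equation*}
-c(x,\xbar_t)+c(x_0,\xbar_t)\leq u(x)-u(x_0).
\end{equation*}
From the definition of $c$-subdifferential applied at the endpoints, there exist $\lambda_0,\lambda_1$ with $u(x_0)=-c(x_0,\xbar_i)+\lambda_i$ and $u(x)\geq -c(x,\xbar_i)+\lambda_i$ for $i=0,1$. Subtracting gives
\begin{equation*}
u(x)-u(x_0)\geq -c(x,\xbar_i)+c(x_0,\xbar_i),\qquad i=0,1,
\end{equation*}
so the right hand side dominates the maximum of these two quantities.

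The remaining step is to bound $-c(x,\xbar_t)+c(x_0,\xbar_t)$ by the same maximum, which is precisely the content of Loeper's maximum principle (Lemma~\ref{lem: DASM}) applied to the line segment $\pbar(t)$ in $\Omegabarcoord{x_0}$: the hypotheses \eqref{A0}--\eqref{A2}, \eqref{A3w}, and $c$-convexity of $\Omega$ with respect to $\Omegabar$ are all in force. Chaining the two inequalities yields the supporting property and hence $\xbar_t\in\partial_c u(x_0)$, proving convexity of $[\partial_c u(x_0)]_{x_0}$.

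In terms of difficulty, there is no real obstacle here once Loeper's maximum principle is available; the entire content is to realize that a candidate $c$-support function at $\xbar_t$ can be manufactured by normalizing to touch $u$ at $x_0$, and then to combine the endpoint support inequalities with the maximum principle. The only points requiring a little care are the domain considerations, i.e.\ verifying that $\pbar(t)$ lies in $\Omegabarcoord{x_0}$ (which uses $c$-convexity of $\Omegabar$ with respect to $\Omega$) so that Loeper's principle is applicable, and that we allow $x$ to range over all of $\Omega$ in the support inequality (which is ensured by $c$-convexity of $\Omega$ with respect to $\Omegabar$, so that the inequality in Lemma~\ref{lem: DASM} is valid for every $x\in\Omega$).
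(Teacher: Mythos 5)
Your proof is correct and follows exactly the route the paper intends: the corollary is listed as a consequence of Loeper's maximum principle (Lemma~\ref{lem: DASM}), and your argument---normalize the candidate $c$-function at $\xbar_t$ to touch $u$ at $x_0$, use the endpoint support inequalities, and control $-c(x,\xbar_t)+c(x_0,\xbar_t)$ by the max of the two endpoint expressions via Lemma~\ref{lem: DASM}---is the standard deduction (and is essentially the proof in Loeper's Theorem~3.1, which the paper cites rather than reproducing). The domain checks you flag ($\pbar(t)\in\Omegabarcoord{x_0}$ via $c$-convexity of $\Omegabar$ w.r.t.\ $x_0$, and applicability of the maximum principle via $c$-convexity of $\Omega$ w.r.t.\ $\Omegabar$) are exactly the right hypotheses to invoke.
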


\begin{cor}\label{cor: local to global}\cite[Proposition 4.4]{Loe09}
Suppose that $c$ satisfies~\eqref{A0}--\eqref{A2} and~\eqref{A3w}, $u$ is a $c$-convex function, and $\Omega$ is $c$-convex with respect to $\Omegabar$.
 Then, for any $\xbar_0\in\Omegabar$ and $\lambda_0\in\R$, if the function $u(\cdot)+c(\cdot, \xbar_0)-\lambda_0$ has a local minimum at a point $x_0\in\Omega$, then 
\begin{align*}
 \xbar_0\in\partial_cu(x_0).
\end{align*}
\end{cor}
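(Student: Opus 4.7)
The plan is to transport the support property of $u$ at $x_0$ from a known global $c$-support to the candidate $c$-support given by $\xbar_0$, by sliding along a $c$-segment joining the two via the base point $x_0$ and using Loeper's maximum principle (Lemma~\ref{lem: DASM}) to control the slide. Under the available hypotheses---\eqref{A3w} and $c$-convexity of $\Omega$ with respect to $\Omegabar$---Lemma~\ref{lem: DASM} is the essential geometric ingredient.

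After replacing $\lambda_0$ by $u(x_0)+c(x_0,\xbar_0)$, which does not change the conclusion, the local minimum condition reads $u(x)+c(x,\xbar_0)\geq u(x_0)+c(x_0,\xbar_0)$ on some ball $B_r(x_0)\subset\Omega$. By $c$-convexity of $u$, pick any $\xbar_1\in\partial_c u(x_0)$, so that $\phi_0(x):=-c(x,\xbar_1)+c(x_0,\xbar_1)+u(x_0)\leq u(x)$ on all of $\Omega$. Set $\pbar(\theta):=(1-\theta)(-Dc(x_0,\xbar_1))+\theta(-Dc(x_0,\xbar_0))$, which is a line segment in $\Omegabarcoord{x_0}$, let $\xbar_\theta:=\cExp{x_0}{\pbar(\theta)}$, and define
\[
\phi_\theta(x):=-c(x,\xbar_\theta)+c(x_0,\xbar_\theta)+u(x_0),
\]
so that $\phi_\theta(x_0)=u(x_0)$ for every $\theta\in[0,1]$. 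Applying Lemma~\ref{lem: DASM} yields $\phi_\theta(x)\leq\max\{\phi_0(x),\phi_1(x)\}$ for all $x\in\Omega$ and $\theta\in[0,1]$. Since $\phi_0\leq u$ globally and $\phi_1\leq u$ on $B_r(x_0)$, this gives the uniform local bound $\phi_\theta\leq u$ on $B_r(x_0)$ for every $\theta\in[0,1]$.

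Next set $\Theta:=\{\theta\in[0,1]\mid\phi_\theta\leq u \text{ on }\Omega\}$, so $0\in\Theta$ and the goal is $1\in\Theta$. The set $\Theta$ is closed by continuity of $(\theta,x)\mapsto\phi_\theta(x)$, and applying Lemma~\ref{lem: DASM} to the subsegment from $\pbar(0)$ to $\pbar(\theta_0)$ for any $\theta_0\in\Theta$ shows that $\Theta$ is downward closed; hence $\Theta=[0,\theta^*]$ for some $\theta^*\in[0,1]$. Suppose for contradiction that $\theta^*<1$. Then there exist sequences $\theta_n\downarrow\theta^*$ and $y_n\in\Omega$ with $\phi_{\theta_n}(y_n)>u(y_n)$; by the local bound $y_n\notin B_r(x_0)$, and by boundedness of $\Omega$ one may assume $y_n\to y^*\in\Omega^{\cl}\setminus B_r(x_0)$, whence $\phi_{\theta^*}(y^*)=u(y^*)$, i.e.\ $\xbar_{\theta^*}\in\partial_c u(y^*)$ in addition to $\partial_c u(x_0)$.

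The hard part will be to extract a contradiction from this second touching point $y^*\neq x_0$. The approach I would follow is to iterate the construction with $y^*$ as the new base point: form the line segment in $\Omegabarcoord{y^*}$ from $-Dc(y^*,\xbar_{\theta^*})$ to $-Dc(y^*,\xbar_0)$ and run the same continuity argument with a new family $\psi_s$ in place of $\phi_\theta$. The initial $\psi_0$ coincides with $\phi_{\theta^*}$, hence remains a global support; but the local-bound step does not transfer directly, since local minimality of $u+c(\cdot,\xbar_0)$ at $y^*$ is not available a priori. Resolving this will require combining the separation $y^*\notin B_r(x_0)$ with compactness of the touching set of $\phi_{\theta^*}$ and a maximality argument to rule out an infinite regress of second touching points, thereby forcing $\theta^*=1$.
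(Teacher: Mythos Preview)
The paper does not give its own proof of this corollary; it simply cites \cite[Proposition~4.4]{Loe09}. So there is nothing to compare against in the paper itself, and your attempt must stand on its own.

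Your setup is exactly the standard Loeper approach: pick a global $c$-support $\phi_0$ at $x_0$, interpolate along the $c$-segment in $\Omegabarcoord{x_0}$ toward the candidate $\xbar_0$, and use Lemma~\ref{lem: DASM} to get the uniform local bound $\phi_\theta\le u$ on $B_r(x_0)$. The definition of $\Theta$, its closedness, and the downward closure via sub-segments are all fine. (One small omission: forming the segment $\pbar(\theta)$ inside $\Omegabarcoord{x_0}$ tacitly uses $c$-convexity of $\Omegabar$ with respect to $x_0$, which is not among the stated hypotheses of the corollary, though it is available in the paper's applications.)

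The genuine gap is the step you yourself flag as ``the hard part.'' You never establish that $\Theta$ is relatively open, and your proposed fix---iterating the construction at the secondary touching point $y^*$---does not close. As you note, local minimality of $u+c(\cdot,\xbar_0)$ is not available at $y^*$, so the uniform local bound that drove the argument at $x_0$ cannot be reproduced; the vague appeal to ``compactness of the touching set'' and ``ruling out infinite regress'' does not supply a mechanism, because each iteration changes both the base point and the relevant $\theta^*$, so there is no fixed compact object on which to run a maximality argument. In short, you have reduced the problem to a nontrivial openness statement and then not proved it.

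The standard completions avoid this trap. One route (used by Loeper) exploits the sharper consequence of \eqref{A3w} that $\theta\mapsto\phi_\theta(x)$ is genuinely quasi-convex for \emph{each} fixed $x$ (not merely bounded by the endpoint maximum), so the set $\{\theta:\phi_\theta(x)\le u(x)\}$ is an interval $[0,\beta(x)]$; one then analyzes the minimizer of $\beta$ directly rather than passing through an accumulation point. Another route switches to the dual picture: pass to coordinates on $\Omegacoord{\xbar_0}$ and use the reversed form of Lemma~\ref{lem: DASM} (equivalently Lemma~\ref{lem: c-convex sublevelset}) together with the $\ctil$-convexity of $\util$ to turn the local minimum into a global one without any continuity-in-$\theta$ argument. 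Either way, the missing ingredient is a structural fact beyond the single application of DASM you have used; your iteration scheme does not substitute for it.
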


The following quite useful  consequence of Lemma~\ref{lem: DASM}  is first observed and utilized in \cite{FKM09, FKM11} and \cite{Liu09}.
\begin{lem}\label{lem: c-convex sublevelset}
Suppose that $c$ satisfies~\eqref{A0}--\eqref{A2} and~\eqref{A3w}, $u$ is a $c$-convex function, and $\Omega$, $\bar\Omega$ are $c$-convex with respect to each other.
Then, for any $\xbar_0\in\Omegabar$ and $\lambda_0\in\R$, the {\em $c$-sublevel set} 
\begin{align*}
 \{x\in\Omega\mid u(x)\leq -c(x, \xbar_0)+\lambda_0\}
\end{align*}
is $c$-convex with respect to $\xbar_0$. In terms of the modified function $\util$ (see  \eqref{eqn: modified u}), the function $p \mapsto \util(p)$ is sublevel set convex in $p$.

\end{lem}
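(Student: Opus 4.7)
The plan is to translate the statement into the $(p,\tilde c,\tilde u)$ coordinates at $\xbar_0$, and then deduce convexity of $\{\util\le \lambda_0\}$ directly from the Loeper maximum principle (Lemma~\ref{lem: DASM}) applied with the roles of $\Omega$ and $\Omegabar$ exchanged.

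First I would observe that since $\cExp{\xbar_0}{\cdot}$ is a diffeomorphism between $\Omegacoord{\xbar_0}$ and $\Omega$, the set $\{x\in\Omega\mid u(x)+c(x,\xbar_0)\le\lambda_0\}$ has coordinate representation $\{p\in\Omegacoord{\xbar_0}\mid \util(p)\le\lambda_0\}$, by the very definition \eqref{eqn: modified u} of $\util$. Thus $c$-convexity of the sublevel set with respect to $\xbar_0$ (Definition~\ref{def: c-convex sets}) is the same as ordinary convexity of $\{\util\le\lambda_0\}\subset \Omegacoord{\xbar_0}$, and proving the two statements reduces to the same thing.

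Next, fix $p_1,p_2$ in this sublevel set, set $p(t):=(1-t)p_1+tp_2$, and $x(t):=\cExp{\xbar_0}{p(t)}$. Since $\Omega$ is $c$-convex with respect to $\xbar_0$, the segment $p(t)$ lies in $\Omegacoord{\xbar_0}$ and $x(t)\in\Omega$. Because $u$ is $c$-convex, for each $t$ there exist $\xbar_t\in\Omegabar$ and $\mu_t\in\R$ with $u(x(t))=-c(x(t),\xbar_t)+\mu_t$ and $u(x)\ge -c(x,\xbar_t)+\mu_t$ for all $x\in\Omega$. Rewriting these relations using $\util(p)+\ctil(p,\xbar)=u(\cExp{\xbar_0}{p})+c(\cExp{\xbar_0}{p},\xbar)$, they become
\begin{align*}
\util(p(t))+\ctil(p(t),\xbar_t)&=\mu_t,\\
\util(p_i)+\ctil(p_i,\xbar_t)&\ge \mu_t,\quad i=1,2.
\end{align*}
Subtracting gives $\util(p(t))\le \util(p_i)+\bigl[\ctil(p_i,\xbar_t)-\ctil(p(t),\xbar_t)\bigr]$ for $i=1,2$.

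The remaining step is to apply Loeper's maximum principle in its reversed form (available because $\Omegabar$ is also $c$-convex with respect to $\Omega$), taking the fixed point to be $\xbar_0$, the line segment to be $p(t)\subset \Omegacoord{\xbar_0}$, and the test point to be $\xbar_t\in\Omegabar$. The conclusion of Lemma~\ref{lem: DASM} (reversed) translates, in terms of $\ctil$, into
\[
\ctil(p(t),\xbar_t)\ge \min_{i\in\{1,2\}}\ctil(p_i,\xbar_t).
\]
Pick the index $i^*$ that achieves this minimum; the corresponding bracket is $\le 0$, so $\util(p(t))\le \util(p_{i^*})\le \lambda_0$, which finishes the proof.

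The only potentially subtle point is the bookkeeping in the change to $\tilde c$--coordinates and making sure the correct (reversed) version of Loeper's maximum principle is invoked for a line segment in $\Omegacoord{\xbar_0}$ rather than in a cotangent space over $\Omega$; once the identity $\util+\ctil=u+c$ (under $x=\cExp{\xbar_0}{p}$) is in hand, everything else is a direct substitution. No new analytic ingredient beyond Lemma~\ref{lem: DASM} is needed.
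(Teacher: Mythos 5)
Your proof is correct, and it is exactly the argument the paper is implicitly pointing to when it calls Lemma~\ref{lem: c-convex sublevelset} a consequence of Lemma~\ref{lem: DASM} (the paper itself does not write out a proof, citing \cite{FKM09, FKM11, Liu09} instead). The one point that deserves care, and which you handled correctly, is invoking the \emph{reversed} form of Loeper's maximum principle for a line segment in $\Omegacoord{\xbar_0}$ rather than in a cotangent space over a point of $\Omega$; this is exactly where the hypothesis that $\Omegabar$ is $c$-convex with respect to $\Omega$ (and not just the other way around) is used, and why the lemma needs mutual $c$-convexity. The identity $\util(p)+\ctil(p,\xbar)=u(x)+c(x,\xbar)$ under $x=\cExp{\xbar_0}{p}$ then reduces everything to a three-line computation, as you have it.
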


\begin{defin}\label{def: brenier sol}
Suppose $c$ satisfies conditions~\eqref{A0}--\eqref{A2}, and $\mu$ and $\nu$ are probability measures defined on $\Omega$ and $\Omegabar$ respectively. A $c$-convex function $u$ is \emph{a Brenier solution for the optimal transportation problem with cost $c$ from $\mu$ to $\nu$} if 
\begin{align*}
 T_{\#}\mu=\nu,
\end{align*}
where the mapping $T(x)$ is given by
\begin{align*}
 T(x):=\cExp{x}{Du(x)} \text{ \ \ \ for a.e. $x\in \Omega$}.
\end{align*}
\end{defin}
\begin{rmk}\label{rmk: existence of brenier sol}
 It is well known (e.g. see \cite{MTW05, Vil09}) that if $\mu$ is absolutely continuous with respect to volume measure on $\M$, and $c$ satisfies~\eqref{A0}--\eqref{A2}, there exists a unique Brenier solution (defined a.e.) up to translation, of the optimal transportation problem with cost $c$ from $\mu$ to $\nu$.
\end{rmk}

\section{Euclidean case}\label{section: euclidean}
In this section, we give a proof for the case $c(x, \xbar)=-\euclidean{ x}{\xbar}$ on $\R^n\times \R^n$.  (One should note that this case is equivalent to $c(x, \xbar) = \frac 1 2 |x - \xbar|^2$.)
This detour serves two purposes. First, our method gives an alternative proof of Caffarelli's original result~\cite{Caf93}.
Second,  we can better focus on illustrating key geometric ideas in our method in the Euclidean case, 
since in this special case we can bypass many of the obstacles that a nonlinear cost creates. %, this will allow us to focus more on the core geometric ideas.

First note that for this cost function, $c$-convexity of a set coincides with the usual notion of convexity, and a $c$-convex function is just a convex function. The $c$-subdifferential
$\partial_c u (x_0)$ at each point $x_0 \in \Omega$, coincides with the ordinary subdifferential $\partial u (x_0)$ as long as $\Omegabar$ contains $\partial u(x_0)$. This is the case if, for example, $\Omegabar\subset \spt{\nu}$ and $u$ is Brenier solution from $\mu$ to $\nu$. %\begin{defin}\label{def: subdiff}
%Recall that if $u$ is a convex function, for any $x_0$ in its domain, the \emph{subdifferential at $x_0$} is given by 
%\begin{align*}
% \partial u(x_0):&=\{\xbar\in\R^n\mid \langle x-x_0, \xbar\rangle +u(x_0)\leq u(x),\ \forall x\in\Dom{(u)}\}.
%\end{align*}
If $E\subset \R^n$, we will write%\marginpar{should we move this to the\\ definition of subdifferential?}
\begin{align*}
 \partial u(E):=\bigcup_{x\in E}{\partial u(x)}.
\end{align*}
 Moreover, by taking the Legendre transform twice, $u$ can be extended to a convex function on all of $\R^n$ with 
 $$\partial u (\R^n) \subset \conv (\spt \mu),$$ where $\conv(E)$ denotes the convex hull of  a set $E$. We denote this extension also by $u$. 

%\end{defin}

In the following we give an alternative proof of Caffarelli's result~\cite{Caf93}: 
\begin{thm}[see \cite{Caf93}]\label{thm: euclidean thm}
Consider probability measures $\mu$ and $\nu$ on $\R^n$ with bounded supports $\spt{\mu}$ and  $\spt{\nu}$. Assume $\mu$ is absolutely continuous with respect to Lebesque measure. Let $u$ be a convex potential on $\R^n$, that is a Brenier solution for the optimal transportation problem with cost $c(x, \xbar)=-\euclidean{x}{\xbar}$ from $\mu$ to $\nu$, which also satisfies 
 \begin{equation}\label{eqn: euclidean upper bound}
\Leb{\partial u(E) \cap \spt{\nu}}\leq \Lambda \Leb{E}
\end{equation}
for any measurable $E\subset \R^n$ and some constant $\Lambda >0$.
%Furthermore, suppose $\Omega\subset \R^n$ is strongly convex.
Then, for any $x\in \R^n$ with $\partial u(x)\cap (\spt{\nu})^{\interior} \neq \emptyset$,
\begin{equation*}
\affdim{\left(\partial u(x)\right)}<\frac{n}{2},
\end{equation*}
where $\affdim{E}$ is the affine dimension of a convex set $E$.
\end{thm}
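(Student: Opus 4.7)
The plan is to argue by contradiction: assume $k := \affdim(\partial u(x_0)) \geq n/2$ and derive a violation of \eqref{eqn: euclidean upper bound} on small sublevel sets of $u$ as $h \to 0^+$. I first normalize. Translating in source and target, subtracting an affine function from $u$ (operations which preserve convexity, sublevel sets, and the transport bound), and rotating coordinates reduce the problem to: $x_0 = 0$, $u(0) = 0$, the point $0$ belongs to both the relative interior of $\partial u(0)$ and to $(\spt \nu)^{\interior}$, and the affine hull of $\partial u(0)$ is $L := \R^k \times \{0\}^{n-k}$. Writing $x = (x^a, x^b) \in \R^k \times \R^{n-k}$ yields $\delta, \rho > 0$ with $\overline{B^k_\delta} \times \{0\} \subset \partial u(0)$ and $B_\rho \subset \spt \nu$. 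Using the supporting hyperplanes at $0$ with slopes in $\overline{B^k_\delta} \times \{0\}$ gives $u(x) \geq \delta|x^a|$, so the sublevel set $S_h := \{u \leq h\}$ is trapped in the slab $\{|x^a| \leq h/\delta\}$.

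The central auxiliary object is $\phi(x^b) := u(0, x^b)$, a convex function on $\R^{n-k}$ with $\phi(0) = \min \phi = 0$ (since $\partial u(0) \subset L$ forces $\partial \phi(0) = \{0\}$), together with its sublevel sets $A_h := \{\phi \leq h\}$. For small $h$ I would establish three estimates. First, an upper volume bound $|S_h| \leq C_1 h^k |A_{C_2 h}|$, obtained by Fubini inside the slab containment above. Second, a target set $Q_h := \overline{B^k_{\delta/2}} \times \partial\phi(A_h)$ contained in $\partial u(S_{C_3 h}) \cap \spt \nu$: the inclusion in $\partial u(S_{C_3 h})$ follows by verifying that for $p = (p^a, p^b) \in Q_h$, the minimizer $x^*$ of $x \mapsto u(x) - p \cdot x$ lies in $S_{C_3 h}$ (using $|p^a| \leq \delta/2$ together with $u(x) \geq \delta|x^a|$ to control $|x^{*a}|$, and the subgradient relation $p^b \in \partial\phi(y^*)$ with $y^* \in A_h$ to relate $x^{*b}$ to $A_h$), while $Q_h \subset B_\rho$ for small $h$ follows from $0 \in \partial\phi(0)$ shrinking $\partial\phi(A_h)$ toward $\{0\}$. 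Third, a dual lower volume bound $|Q_h| \geq c_1 h^{n-k}/|A_h|$ from a Blaschke--Santal\'o-type duality
\[
|A_h| \cdot |\partial\phi(A_h)|_{n-k} \geq c\, h^{n-k}
\]
in $\R^{n-k}$ (verifiable directly when $\phi$ is $p$-homogeneous, and expected to hold in general since $\phi$ and its Legendre conjugate both have their minimum $0$ at the origin). Chaining these through \eqref{eqn: euclidean upper bound} yields $|A_h| \cdot |A_{Ch}| \geq c' h^{n-2k}$. When $k > n/2$ the right-hand side diverges as $h \to 0^+$, contradicting boundedness of $A_h$; in the borderline case $k = n/2$ it stays bounded below while $|A_h| \to 0$ (which must hold once a flat piece of $\phi$ at $0$ is ruled out, something one expects to preclude from $\partial u(0) \subset L$ combined with upper semicontinuity of $\partial u$).

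I expect the main obstacle to be the dual volume estimate $|A_h| \cdot |\partial\phi(A_h)|_{n-k} \geq c h^{n-k}$, which is the geometric substitute for the divergence-theorem-plus-AM-GM step of Caffarelli's original argument; the critical exponent $n/2$ emerges precisely as the balance point between the source-side exponent $k$ and the target-side exponent $n-k$. A second delicate issue is the confinement of minimizers of $u - p \cdot x$ for $p \in Q_h$ into $S_{C_3 h}$, which is essentially tautological when $u$ splits as $\delta|x^a| + \phi(x^b)$ but in general requires careful use of the separation between the $x^a$ and $x^b$ behavior of $u$ and a quantitative form of the continuity $x_p \to 0$ as $p \to \partial u(0)$.
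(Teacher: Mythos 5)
Your approach is genuinely different from the paper's: you work with the restricted convex function $\phi(x^b) = u(0,x^b)$, its sublevel sets $A_h$, and a polar-duality estimate $\lvert A_h\rvert\cdot\lvert\partial\phi(A_h)\rvert\gtrsim h^{n-k}$, whereas the paper builds an explicit pair of wedge-like bodies $\cyl_d,\cylbar_d$ anchored at a \emph{boundary point} of the contact set $\{u=0\}$. That anchoring is precisely what your argument is missing, and it is where the gap lies. After your normalizations, the contact set $\sublevelset_0=\{u=0\}$ is a compact convex subset of the valley $\{x'=0\}$, and there is no reason it should equal $\{0\}$: having $0\in\partial u(0)$ with $\affdim\partial u(0)=k$ and $\partial u(0)\subset L$ is fully compatible with $\phi=u|_{\{x'=0\}}$ vanishing on a nondegenerate convex set $F\ni 0$ (for instance $u(x^a,x^b)=\delta\lvert x^a\rvert+\bigl(\max(\lvert x^b\rvert-1,0)\bigr)^2$). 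Such a flat piece cannot be ``ruled out from $\partial u(0)\subset L$ and upper semicontinuity of $\partial u$'' as you hope; it is exactly the degeneracy being measured. When $F\neq\{0\}$ you have $A_h\supset F$, so $\lvert A_h\rvert\not\to 0$ and the borderline case $k=n/2$ gives no contradiction from $\lvert A_h\rvert\lvert A_{Ch}\rvert\gtrsim h^0$; moreover $\partial\phi(A_h)$ need not shrink toward $\{0\}$, so the claimed containment $Q_h\subset B_\rho\subset\spt\nu$ also fails for fixed $h$. The paper deals with this by first passing (Step 1) to a boundary point $x_e\in\partial\sublevelset_0$ with the exterior sphere property and then building one-sided cones whose opening angle $\rho_d-\pi/2\sim d$ is tuned to the radius of that sphere; there is no analogue of that step in your outline.

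The second gap is the one you flag yourself: the inclusion $Q_h\subset\partial u(S_{C_3 h})$. The natural route -- take $x^*\in\argmin(u-p\cdot x)$ for $p=(p^a,p^b)$ with $\lvert p^a\rvert\le\delta/2$ and $p^b\in\partial\phi(y^*)$, then try to bound $u(x^*)$ -- does not close with the tools you list. From $u\ge\delta\lvert x^a\rvert$ one gets $u(x^*)\le 2\,p^b\cdot x^{*b}$, but relating $p^b\cdot x^{*b}$ to $h$ via the Lipschitz coupling $\lvert\phi(x^{*b})-u(x^*)\rvert\le\Lip(u)\lvert x^{*a}\rvert$ produces a self-improving inequality only when $\delta/(\Lip(u)+\delta)>2$, which never holds. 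Unless $u$ actually splits as $\delta\lvert x^a\rvert+\phi(x^b)$ (in which case the claim is trivial), the cross-coupling is an obstruction. This is where the paper's device of comparing $u$ with an affine function \emph{on the boundary} $\partial\cyl_d$ and then invoking the minimum-at-an-interior-point argument (Steps 3--4) is doing real work that a raw ``locate the minimizer'' argument does not reproduce. Your dual volume estimate, by contrast, is sound: $\tfrac{h}{2}(A_h)^{\circ}\subset\partial\phi(A_h)$ follows from the Fenchel minimization you describe, and $\lvert A_h\rvert\lvert(A_h)^{\circ}\rvert$ is bounded below since $0\in\interior A_h$; that part is a nice alternative to the paper's $f(d)$ bookkeeping. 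But without handling (i) a nontrivial contact set and (ii) the inclusion of $Q_h$ in the image of a small sublevel set, the proposal does not yet give the theorem, in particular not in the sharp case $k=n/2$.
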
%\marginpar{YH: Oct.3}
\begin{proof} \ 
\subsubsection*{Step 0}
Suppose that the theorem does not hold.
%We choose two big balls $B$ and $\bar B$ that contain $\spt \mu$ and $\spt \nu$ respectively. 
%To make the following proof similar to the general case in Section~\ref{section: general c}, we denote $\Omega=B$ and $\Omegabar=\bar B$, and let 
Let 
\begin{equation*}
k:=\affdim{\left(\partial u(x_0)\right)}=\max{\left\{\affdim{\left(\partial u(x)\right)}\mid x\in\R^n\text{ and }\partial u(x)\cap (\spt {\nu})^{\interior}\neq \emptyset\right\}}
\end{equation*}
 for some $x_0\in \R^n$ with $k\geq\frac{n}{2}$. The idea of the following proof is to first find a point, say $x_e$, and two interrelated geometric shapes $\cyl_d $ in $\R^n$ (the domain of $\partial u$) 
  and $\cylbar_d $  in $\R^n$ (the target of $\partial u$),  depending on a small parameter $d$. Then by using the inequality~\eqref{eqn: euclidean upper bound}, we draw a contradiction as the parameter $d$ goes to zero.   
 
 \subsubsection*{Step 1}
 First, let us find a suitable  point $x_e$.  Note that by~\eqref{eqn: euclidean upper bound} we must have $k\leq n-1$. By subtracting an affine function, we may assume that the origin $0$ is contained in both the relative interior of $\partial u(x_0)$ and $(\spt{\nu})^{\interior}$, and that $u(x_0)=0$ (in particular, this implies $u \ge 0$), and by a rotation we may assume that the affine hull of $\partial u(x_0)$ is spanned by $\{e_i\}_{i=1}^k$, the standard orthonormal Euclidean basis. Now define the $(n-k)$-dimensional affine set
\begin{equation}\label{eqn: valley euclidean case}
\affinevalley:=\bigcap_{\xbar\in \partial u(x_0)}\{x\in\R^n\mid \euclidean{ (x-x_0)}{\xbar} =0\}=\{x\in\R^n\mid x'=0\}
\end{equation}
(see Remark~\ref{rmk: coordinates notation} for the notation $x'$ and $x''$).
Also define the contact set,
\begin{equation*}
\sublevelset_0:=\{x\in\R^n\mid u(x)=0\},
\end{equation*}
which is convex since $u\geq 0$.
%First, the contact set $\sublevelset_0$ is in the interior of $ \Omega$, i.e. $\sublevelset_0 \subset \Omega$. This is because since $\Omega$ is strongly convex and  $\partial u(x)\cap (\spt{\nu})^{\interior} \neq \emptyset$, we can use  \eqref{eqn: euclidean upper bound} and  apply the interior-not-to-boundary lemma in \cite[Theorem 5.1 (b)]{FKM11}. 
 Note that $\sublevelset_0$, which is obviously closed, is in fact compact. This is from the boundedness of $\spt \mu$, the fact that $\partial u(x)\cap (\spt{\nu})^{\interior} \neq \emptyset$, and \eqref{eqn: euclidean upper bound}. For example, this  can be shown by applying the boundary-not-to-interior lemma~\cite[Theorem 5.1 (b)]{FKM11}, to a sufficiently large ball (a strongly convex set) containing $\spt {\mu}$. Since $\sublevelset_0$ cannot intersect the boundary of this ball, by connectedness it must remain bounded.
 %\marginpar{we don't need to put\\ $\spt{\nu}$ inside a ball here.}% and another large ball containing $\spt \nu$: the set $\sublevelset_0$ cannot go outside the ball.  

Now, by boundedness of $\sublevelset_0$, %(since $\sublevelset_0 \subset \Omega$), 
we may find a point  $x_e$ in $\partial \sublevelset_0$ with the exterior sphere property.  By a translation of the domain we may  assume $x_e$ to be the origin, i.e. $x_e=0$. Since $\partial u(x_0)$ contains the $k$-dimensional ball $\{\xbar\in\R^n\mid \lvert \xbar'\rvert\leq r_0,\ \xbar''=0\}$ for some $0<r_0\leq 1$, we also find that 
\begin{equation*}
0=u(x)\geq \pm\euclidean{ (x-x_0)}{\frac{r_0 \xbar}{\lvert \xbar\rvert}}
\end{equation*}
 for any $x\in\sublevelset_0$ and $\xbar\in\partial u(x_0)$, from which it follows that
\begin{equation}\label{eqn: contact set inclusion euclidean case}
\sublevelset_0\subset\affinevalley.
\end{equation}
Moreover, from the fact that $u(0) =0$, and the definitions of $\affinevalley$ and the subdifferential,  it follows 
\begin{equation*}
\partial u(x_0)\subset \partial u(0),
\end{equation*}
and by the maximality of $k$, we find that $\partial u(0)$ also has affine dimension $k$. 

\subsubsection*{Step 2}
Next, let us construct the geometric shapes $\cyl_d $ and $\cylbar_d$. 
Recall that $0$ is a point in $\partial \sublevelset_0$ with the exterior sphere property. 
Hence, by a rotation in $x''$, we may assume for some $R_0>0$ that  $\sublevelset_0$ is contained in an $(n-k)$-dimensional ball with radius $R_0$, i.e.  
\begin{equation}\label{eqn: euclidean flat contact set}
\sublevelset_0\subset \{x\in\R^n\mid x'=0,\ \lvert x''+R_0e_n\rvert \leq R_0\}.
\end{equation}
%\marginpar{Removed since don't need: \\Notice $0 \in \sublevelset_0$ and thus $ 0 \in \Omega$.}
% Since $\Omega$ is strongly convex and $\partial u(0)\cap\Omegabar\neq\emptyset$ as well, we use  \eqref{eqn: euclidean upper bound}, and  apply ~\cite[Theorem 5.1 (b)]{FKM11} to see that $0\in\Omega$.

We now define some auxiliary sets.  
For each $0 < \rho < \pi$ ($\rho$ will be chosen later to be close to $\pi/2)$, define the $n$-dimensional cone
 \begin{equation*}
 \cone_\rho:=\{x\in\R^n\mid \lvert x\rvert\cos{\rho}\leq x^n\}
 \end{equation*}
 (here, $\rho$ is the angle of the opening of the cone). Then, for $d>0$ and $0<\rho<\pi$, define
 \begin{equation*}
 \base_{d, \rho}:=\{x\in\R^n \mid    \lvert x''\rvert= d\} \cap\cone_{\rho}\cap B_1(0)^{\cl}.
\end{equation*}

We next determine the dependance of  $\rho$ on the parameter $d$, as we wish to obtain  
%\begin{align*}
 $\base_{d,  \rho} \cap \sublevelset_0 = \emptyset, 
$
or equivalently, $u(x)>0$ for all $x\in \base_{d,  \rho}$. 
%\end{align*}
To this end, fix $x \in \base_{d,  \rho}$.  
Note that if $x'\neq 0$, we have $x\not\in \sublevelset_0$ by~\eqref{eqn: euclidean flat contact set}. On the other hand  
 if $x'=0$
 %($\constOne>0$ will be chosen later), 
 %and $x\in \cone_\rho$, 
 we can calculate
 \begin{align*}
 \lvert x''+R_0e_n\rvert^2&=\lvert x''\rvert ^2+R_0^2+2R_0x^n\\
 &\geq R_0^2+d^2+2R_0\lvert x\rvert\cos{\rho}\\
 &= R_0^2+d^2+2R_0  d\cos{\rho},
 \end{align*}
 thus if we choose $\cos{\rho}>-\frac{ d}{2R_0}$ we obtain $\lvert x''+R_0e_n\rvert>R_0$, hence again $x\not \in \sublevelset_0$ by~\eqref{eqn: euclidean flat contact set}. 
% In particular,
% \begin{equation*}\label{eqn: strict positivity on boundary euclidean}
% u(x)>0.
% \end{equation*}
Since for $d>0$ sufficiently small,
\begin{equation*}
\cos{\left(\frac{\pi}{2}+\frac{ d}{4R_0}\right)}=-\sin{\left(\frac{ d}{4R_0}\right)}> -\frac{ d}{2R_0},
\end{equation*}
we will have $u(x)>0$ for all $x\in \base_{d,  \rho_d}$, i.e. 
%In particular, 
$
 \base_{d,\rho_d} \cap \sublevelset_0 = \emptyset, 
$
%as desired. 
 for the choice 
\begin{align}\label{eqn: rho d}
\rho_d:= \frac{\pi}{2}+ \frac{ d}{4R_0}.
\end{align} 
% \begin{equation*}
%\base_d:=\{x\in\R^n \mid \lvert x''\rvert= \constOne d\} \cap\cone_{\frac{\pi}{2}+ \frac{\constOne d}{4R_0}}.
%\end{equation*}

Now, we construct $\cyl_d $ and $\cylbar_d$. First, define
\begin{equation*}
f (d):=\inf_{\base_{d, \rho_d} }  u(x)  
\end{equation*}
for each $d>0$ sufficiently small. 
Notice that by the previous paragraph,  $u>0$ on $\base_{d,\rho_d}$ which is also compact by definition, hence we see that for any $d>0$ small,%is away from the  zero-level set $\sublevelset_0$ of $u$, moreover, in the construction,  the distance between $\base_{d, \rho_d}$ and $\sublevelset_0$ is bounded from below (the bound depends on $d$), thus, we see that the quantity $f(d)$ is indeed strictly positive
$$
f(d) >0. 
$$% \marginpar{Why $f(d)$ strictly positive?\\ Need to be careful here\\ since the wall may not be compact.\\.... technically speaking...YH}}
%Observe that since $\Omegabar$ is bounded and $u$  is the Brenier solution with ,  
 Observe that since $\spt \nu$ is bounded and $\partial u (\R^n) \subset \conv (\spt \mu)$,
$u$ is Lipschitz and therefore $f(d) / d \lesssim 1$.   
Then, define the set
\begin{equation*}
\cyl_d:=\{x\in\R^n\mid \lvert x'\rvert\leq f(d),\ \lvert x''\rvert \leq  d\} \cap\cone_{\rho_d}. 
\end{equation*} 
We also define 
 \begin{equation*}
\conebar_d:=\{\xbar\in\R^n\mid \lvert \xbar''\rvert\cos\Big( \frac{d}{4R_0}\Big)\leq \xbar^n\},
\end{equation*}
  a cylinder whose base is an $(n-k)$-dimensional cone.
%for some small universal constant $\beta>0$ to be determined. 
Notice that the angle $\frac{d}{4R_0}$ here is complimentary to the angle $\rho_d$.  
Using this we define the set 
\begin{align}\label{eqn: Euclidean Wbar d}
\bar \base_d := \conebar_d \cap \{ \xbar\in \R^n \mid \lvert\xbar'\rvert\le \frac{r_0}{2},\ \lvert\xbar''\rvert = \frac{r_0f (d)}{2d}\} 
\end{align}
and then $\cylbar_d$ as the cone over  $\bar\base_d$ with vertex $0$, that is,
\begin{equation*}
\cylbar_d:= \bigcup_{\xbar \in \bar\base_d} [0, \xbar] %\text{\rm cone}[0, \bar \base_d] 
%\{\xbar\in\R^n\mid \lvert \xbar'\rvert\leq d,\ \lvert \xbar''\rvert\leq  \frac{r_0f (d)}{2d}\}\cap \conebar_{\constOne (\rho_d - \pi/2)}
\end{equation*}
where $[0, \xbar]$ denotes the line segment (in $\R^n$) connecting $0$ to $\xbar$. 
%\begin{equation*}
%\cylbar_d:=\{\xbar\in\R^n\mid \lvert \xbar'\rvert\leq d,\ \lvert \xbar''\rvert\leq  \frac{r_0f (d)}{2d}\}\cap \conebar_{\constOne (\rho_d - \pi/2)}
%\end{equation*}
%where $\constOne$ is a  small universal  constant   to be determined. %(see~\eqref{eqn: constOne}).  

We can see that this set has the volume bound
\begin{align}\label{eqn: vol cylbar}
   \left(\frac{f(d)}{d} \right)^{n-k} d^{n-k-1}\sim \Leb{\cylbar_d}
\end{align}
where the constant of proportionality depends on $r_0$. 
Indeed, first note that for any $0\leq \lambda \leq \left(\frac{r_0f (d)}{2d}\right)$, it is easy to see that
\begin{align*}
 \cylbar_d\cap \{\lvert \xbar''\rvert=\lambda\}=\left\{\frac{2\lambda d}{r_0f (d)}\xbar\mid \xbar\in\bar\base_d\right\},
\end{align*}
which has Hausdorff dimension $n-1$, with surface measure
\begin{align*}
 \surface[n-1]{\cylbar_d\cap \{\lvert\xbar''\rvert=\lambda\}}&=\left(\frac{2\lambda d}{r_0f (d)}\right)^{n-1}\surface[n-1]{\bar \base_d}\\
 &\sim \left(\frac{f (d)}{d}\right)^{1-n}\lambda^{n-1}d^{n-k-1}\left(\frac{f (d)}{d}\right)^{n-k-1}.
\end{align*} 
Then by the coarea formula, 
\begin{align*}
 \Leb{\cylbar_d}&=\int_0^{\frac{r_0f (d)}{2d}}{\surface[n-1]{\cylbar_d\cap \{\lvert\xbar''\rvert=\lambda\}}d\lambda}\\
 %&=\int_0^{\frac{r_0f (d)}{2d}}{\left(\frac{2d\lambda}{r_0f (d)}\right)^{n-1}\surface[n-1]{\bar \base_d}d\lambda}\\
 &\sim d^{n-k-1}\left(\frac{f (d)}{d}\right)^{-k}\int_0^{\frac{r_0f (d)}{2d}}{\lambda^{n-1}d\lambda}\\
 &\sim\left(\frac{f (d)}{d}\right)^{n-k}d^{n-k-1}
\end{align*}
as claimed.

\subsubsection*{Step 3}
We now claim that for each $d>0$ sufficiently small, we have 
\begin{equation}\label{eqn: show this inequality euclidean case}
\euclidean{ x}{\xbar}\leq u(x)
\end{equation}
for $x\in \partial \cyl_d$ and $\xbar\in \cylbar_d$. 
Since $u\ge0$ and the function $\bar y\mapsto \euclidean{ x}{\bar y}$ is convex, by the definition of $\cylbar_d$ we can see that it is sufficient to show~\eqref{eqn: show this inequality euclidean case} for all $\xbar \in \bar\base_d$
(in the case of a more general cost function, the convexity of $\bar y \mapsto \euclidean{x}{\bar y}$ will be replaced by Loeper's maximum principle, Lemma~\ref{lem: DASM}).

 Fix $\xbar\in\bar\base_d$. As $$\partial \cyl_d \subset  \base_{d, \rho_d} \cup \{ x \in \R^n \mid  |x'| = f(d)\} \cup  \partial \cone_{\rho_d},$$
there are three cases for $x\in \partial \cyl_d$:
\begin{enumerate}
\item $x\in \base_{d, \rho_d} \cap \partial \cyl_d$;
\item $ x \in \partial \cyl_d$ and $\lvert x'\rvert=f(d)$ ;% and $\lvert x''\rvert\leq d$
\item $x\in\partial\cone_{\rho_d} \cap \partial \cyl_d$.
\end{enumerate}
%\BlueComment{Do we use $\base_d$ or $\base_{d, \rho_d}$?. }

\noindent {\em Case 1:}
First, suppose $x\in  \base_{d, \rho_d}\cap \partial \cyl_d$. Then since $r_0\leq 1$ and by the definition of $f(d)$,
\begin{align*}
\euclidean{ x}{\xbar}&=\euclidean{ x'}{ \xbar'} +\euclidean{ x''}{\xbar''} \\
&\leq \lvert x'\rvert\lvert \xbar'\rvert+\lvert x''\rvert\lvert \xbar''\rvert\\
&\leq  f(d) \cdot \frac{r_0}{2} + d\cdot\frac{r_0f(d)}{2d}\\
&\leq r_0f(d)\\
&\leq u(x).
\end{align*}

\noindent {\em Case 2:}
Next, suppose that $x\in \partial\cyl_d$ with $\lvert x'\rvert =f(d)$. % and $\lvert x''\rvert \leq  d$. 
Notice that by assumption,  $\frac{r_0 x'}{\lvert x'\rvert}\in\partial u(0)$. Thus, we have
\begin{align*}
\euclidean{ x}{\xbar}&\leq \lvert x'\rvert\lvert \xbar'\rvert+\lvert x''\rvert\lvert \xbar''\rvert\\
&\leq r_0 f(d)\\
&=r_0\lvert x'\rvert\\
&=\euclidean{ x}{ \frac{r_0 x'}{\lvert x'\rvert}}\\
&\leq u(x) . 
\end{align*}
%again, when $d$ is sufficiently small.

\noindent {\em Case 3:}
Finally,  suppose that $x \in \partial \cone_{\rho_d} \cap \partial \cyl_d$. 
%$x$ is not in the interior of $\cone_{\rho_d}$, i.e.
%\begin{equation*}
%x^n\leq\lvert x\rvert \cos{\rho_d}=-\lvert x\rvert \sin{\left(\frac{ d}{4R_0}\right)}. 
%\end{equation*}
%(Here, we will determine $\constOne$ appropriately.)
 Since the angles $\rho_d$ and $\frac{d}{4 R_0}$ are complementary, by the definitions of $\cone_{\rho_d}$ and $\conebar_d$ we have %$\conebar_{\constOne (\rho_d -\pi/2)}$, 
%\begin{equation*}
%\left\vert\frac{x}{\lvert x\rvert}-e_n\right\vert^2=2-2\frac{x^n}{\lvert x\rvert}\geq 2(1+\sin{\left(\frac{ d}{4R_0}\right)})
%\end{equation*}
%and
%\begin{equation*}
%\left\vert\frac{\xbar''}{\lvert \xbar''\rvert}-e_n\right\vert^2=2-2\frac{\xbar^n}{\lvert \xbar''\rvert}\leq 2(1-\cos{\left(\constOne \frac{d}{4 R_0} \right)})
%\end{equation*}
%Then, 
%\begin{align*}
%\left\vert \frac{x}{\lvert x\rvert}-\frac{\xbar''}{\lvert \xbar''\rvert}\right\vert&\geq \left\vert \left\vert \frac{x}{\lvert x\rvert}-e_n\right\vert-\left\vert\frac{\xbar''}{\lvert \xbar''\rvert}-e_n\right\vert\right\vert\\
%&\geq \sqrt{2}\left(\sqrt{1+\sin{\left(\frac{ d}{4R_0}\right)}}-\sqrt{1-\cos{\left(\constOne \frac{d}{4 R_0}\right)}}\right)\\
%%&\geq \sqrt{2}\left(1+\sin{\left(\frac{ d}{4R_0}\right)}-\sqrt{\constOne^2d^2}\right)\\
%%&\geq \sqrt{2}\left[1+d\left(\frac{1 }{4\sqrt{2}\cdot 4R_0}-\constOne\right)\right]\\
%&\ge \sqrt{2}%(1+\frac{d}{32R_0\sqrt{2}})
%\end{align*}
%for $d$ sufficiently small with a small universal constant $\constOne >0$. 
%%\begin{equation}\label{eqn: constOne}
%%\constOne:=\frac{1}{32R_0\sqrt{2}}.
%%\end{equation}
% Then, squaring both sides and rearranging,
%\begin{equation*}
%\left<\frac{x}{\lvert x\rvert}, \frac{\xbar''}{\lvert \xbar''\rvert}\right>
%%\leq -d\left(\frac{1}{8R_0\sqrt{2}}+\frac{d}{(32R_0)^2}\right)
%\leq 0,
%\end{equation*} 
%for $d$ sufficiently small, therefore, 
$\euclidean{ x}{ \xbar''} = \euclidean{ x''}{ \xbar''} \le 0 $.
Therefore, 
\begin{align}
\euclidean{ x}{ \xbar} &=\euclidean{ x}{ \xbar'} +\euclidean{ x}{ \xbar''}\notag\\
&\leq \euclidean{ x}{ \xbar'}\notag\\
&\leq u(x)\label{eqn: outside of cone euclidean case}.
\end{align}
 In the last line, we used the fact that $\xbar' \in \partial u (0)$ for $|\xbar '| \le r_0$. %for sufficiently small $|\xbar'| \le d$.   
With this, we have shown the inequality~\eqref{eqn: show this inequality euclidean case} for all $x\in \partial \cyl_d$ and $\xbar\in \bar \base_d$, thus also for all $\xbar \in  \cylbar_d$.

\subsubsection*{Step 4}
%\marginpar{YH: I brought the previous argument back}
We now claim that $\cylbar_d\subset\partial u(\cyl_d)$, which follows from the convexity of $u$ by showing that  the function $y \mapsto u(y)-\euclidean{ y}{ \xbar}$ has a local minimum inside $\cyl_d$ for each $\xbar\in \cylbar_d$. 
% Notice that by construction, $\cyl_d$ has a nonempty intersection with both of $\{ x \mid \euclidean{ x}{\xbar} <0\}$ and $\{x \mid \euclidean{ x}{\xbar}>0 \}$ due to the fact that $\rho_d > \pi/2$.
% Since $u(0)=0$  in our setting,  this implies that  there exists a point $x$ in the interior of $\cyl_d$ where $u(x)<\euclidean{ x}{\xbar}$, thus  we obtain a local minimum from~\eqref{eqn: show this inequality euclidean case} verifying the claim by the definition of the subdifferential $\partial u$. 
If this does not hold,  then, it implies $\euclidean{x}{\xbar}  \le u(x)$ for all $x \in \cyl_d$. However, by the calculations leading to~\eqref{eqn: outside of cone euclidean case}  we see that 
 $\euclidean{ x}{\xbar}\leq u(x)$ for $x$ in some small neighborhood of $0$ outside the cone $\cone_{\rho_d}$ thus outside $\cyl_d$. Since $u(0)=0$, these together shows that  $0$ is a local minimum, verifying the claim.

\subsubsection*{Step 5}

Finally, the inclusion $\cylbar_d\subset\partial u(\cyl_d)$ from the previous paragraph will lead to a contradiction. First note that since $0\in(\spt{\nu})^{\interior}$, this implies if $d>0$ is sufficiently small we have $\cylbar_d\subset (\spt{\nu})^{int}$. Then from~\eqref{eqn: euclidean upper bound} and considering the volumes of $\cyl_d$ and $\cylbar_d$, this implies for all $d>0$ sufficiently small (also recall~\eqref{eqn: vol cylbar})
 \begin{align*}
 d^{n-k-1}\cdot\left(\frac{f(d)}{d}\right)^{n-k} & \Leb{\cylbar_d}\\
&\lesssim\Leb{\partial u(\cyl_d)\cap \spt{\nu}}\\
&\lesssim \Leb{\cyl_d}\\
&\lesssim f(d)^{k}d^{n-k},
\end{align*}
%
%\begin{align*}
%d^{n-k-1}\cdot d^k\cdot\left(\frac{f(d)}{d}\right)^{n-k}&\sim \Leb{\cylbar_d}\\
%&\lesssim\Leb{\partial u(\cyl_d)}\\
%&\lesssim \Leb{\cyl_d}\\
%&\lesssim f(d)^{k}d^{n-k}.
%\end{align*}
 and after rearranging we obtain
\begin{equation*}
\left(\frac{f(d)}{d}\right)^{n-2k} \lesssim  d^{k+1}.
\end{equation*}

However, since $n-2k\leq 0$ by assumption and $f(d)/d$ is bounded from above, we see that $(f(d)/d)^{n-2k}$ is bounded below away from zero,  
%so after rearranging we obtain
%\begin{equation*}
%1 \lesssim\left(\frac{f(d)}{d}\right)^{n-2k} \lesssim  {\red d^{k+1}}.
%\end{equation*}
 which is a contradiction as $d\to 0$.
  \end{proof}

\section{General costs with MTW condition -- set up for the proof of Theorem~\ref{thm: A3w subdifferential bound}}\label{section: general c}

In this section,  we prove some preliminary results in anticipation of the analogue of \textbf{Step 1} (from the proof of Theorem~\ref{thm: euclidean thm}), for when we undertake the proof of Theorem~\ref{thm: A3w subdifferential bound}.
%
%make an appropriate set up as a preparation for the proof of Theorem~\ref{thm: A3w subdifferential bound}.

When the Euclidean cost function is replaced by a more general cost function $c$ satisfying~\eqref{A0}--\eqref{A2} and~\eqref{A3w}, we may adapt the geometric argument used in the previous section to prove a result similar to Theorem~\ref{thm: euclidean thm}. However, the nonlinearity introduced by the cost function can cause a number of obstructions, requiring additional technical details. 

First, recall that as a preliminary step in the proof of Theorem~\ref{thm: euclidean thm} above, we ``shifted'' from the initial point $x_0$ where the Theorem was assumed to be violated, to a point in the contact set $\{u=0\}$ which possessed the exterior sphere property. In the Euclidean cost case we almost immediately obtained that the subdifferential of $u$ at the original point was also contained in the subdifferential at the ``shifted point,'' however for a general cost function we must carefully use Lemma~\ref{lem: DASM} to obtain such a containment. This is undertaken in Lemma~\ref{lem: c-subdifferential containment} below.

\begin{lem}\label{lem: c-subdifferential containment}
Suppose that $\Omega$ and $\Omegabar$ are $c$-convex with respect to each other, $c$ satisfies~\eqref{A0}--\eqref{A2} and~\eqref{A3w}, and $u$ is a $c$-convex function on $\Omega$. 
Let $\xhat_0\in \Omega$, 
and $\xbar_0\in \partial_cu(\xhat_0)$, and  
%let  $\mountainhat_0: \Omega \to \R$ be the $c$-function with focus $\xbarhat_0$, supporting to $u$ from below at $\xhat_0$, i.e. $$\mountainhat_0 (\cdot) = -c(\cdot, \xbarhat_0) +  c(x_0, \xbarhat_0) + u(x_0). $$ 
define  the contact set
\begin{equation*}
\subzerohat:=\{x\in\Omega\mid u(x)=-c(x, \xbarhat_0) +  c(x_0, \xbarhat_0) + u(x_0)\}.
\end{equation*}
Also suppose  $-Dc(\xhat_0, \xbar_0)$ is contained in the relative interior of the convex set $\subdiffcoord{\xhat_0}$. 
  Then, for any point  $x_e$ in $\subzerohat$, we have
\begin{equation*}
\partial_cu(\xhat_0)\subset\partial_cu(x_e).
\end{equation*}
\end{lem}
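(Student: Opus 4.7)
The plan is to reduce the inclusion $\partial_c u(\xhat_0) \subset \partial_c u(x_e)$ to a pointwise equality for each $\xbar \in \partial_c u(\xhat_0)$, then apply Loeper's maximum principle (Lemma~\ref{lem: DASM}) to a carefully chosen segment through $-Dc(\xhat_0, \xbarhat_0)$ whose existence is furnished by the relative interior hypothesis. First, I would fix $\xbar \in \partial_c u(\xhat_0)$ and note that the support inequality $u(x_e) \ge -c(x_e, \xbar) + c(\xhat_0, \xbar) + u(\xhat_0)$ is automatic, with $\xbar \in \partial_c u(x_e)$ equivalent to equality. Combining with the contact identity $u(x_e) = -c(x_e, \xbarhat_0) + c(\xhat_0, \xbarhat_0) + u(\xhat_0)$ furnished by $x_e \in \subzerohat$, the goal reduces to
\[
c(\xhat_0, \xbar) - c(x_e, \xbar) = c(\xhat_0, \xbarhat_0) - c(x_e, \xbarhat_0).
\]

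Next I would invoke the relative interior hypothesis. Assuming $\xbar \ne \xbarhat_0$, since $-Dc(\xhat_0, \xbarhat_0)$ lies in the relative interior of the convex set $\subdiffcoord{\xhat_0}$ (convexity coming from Corollary~\ref{cor:c-convex c-subdifferential}), there exist $\xbar_* \in \partial_c u(\xhat_0)$ and $t_0 \in (0, 1)$ such that
\[
-Dc(\xhat_0, \xbarhat_0) = (1-t_0)(-Dc(\xhat_0, \xbar)) + t_0 (-Dc(\xhat_0, \xbar_*)).
\]
Let $\pbar(t)$ denote the line segment joining these endpoints in $\Omegabarcoord{\xhat_0}$, and set $\xbar(t) := \cExp{\xhat_0}{\pbar(t)}$; by $c$-convexity of $\partial_c u(\xhat_0)$, each $\xbar(t) \in \partial_c u(\xhat_0)$. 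Defining $G(t) := -c(x_e, \xbar(t)) + c(\xhat_0, \xbar(t))$, the support inequality at $x_e$ yields $G(t) \le u(x_e) - u(\xhat_0) =: M$ on $[0,1]$, while the contact identity yields $G(t_0) = M$. Applying Lemma~\ref{lem: DASM} at $x = x_e$ produces $G(t_0) \le \max\{G(0), G(1)\}$, hence $\max\{G(0), G(1)\} = M$. This gives the dichotomy: either $G(0) = M$ (so $\xbar \in \partial_c u(x_e)$, concluding the proof), or $G(1) = M$ (so $\xbar_* \in \partial_c u(x_e)$).

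The main obstacle will be eliminating the second alternative $G(1) = M$, $G(0) < M$. My plan here is to exploit the freedom in choosing $\xbar_*$: the relative interior hypothesis lets $\xbar_*$ range along a ray emanating from $-Dc(\xhat_0, \xbarhat_0)$ inside $\subdiffcoord{\xhat_0}$ opposite to $-Dc(\xhat_0, \xbar)$. If the second alternative held for every admissible $\xbar_*$, the whole family would lie in $\partial_c u(x_e)$, producing a curve through $\xbarhat_0$ in $\partial_c u(x_e)$ with a specific tangent direction governed by the derivatives of $-Dc$. Combining this with the $c$-convexity of $\partial_c u(x_e)$ with respect to $x_e$ (Corollary~\ref{cor:c-convex c-subdifferential}) and a second application of Lemma~\ref{lem: DASM}, this time along a segment in $\Omegabarcoord{x_e}$ joining $-Dc(x_e, \xbarhat_0)$ to $-Dc(x_e, \xbar_*)$, together with a limit argument as $\xbar_* \to \xbarhat_0$, should propagate the required equality to $\xbar$ and close the proof.
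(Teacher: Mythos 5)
Your reduction, choice of segment through $-Dc(\xhat_0, \xbarhat_0)$ via the relative interior hypothesis, and application of Lemma~\ref{lem: DASM} to get the dichotomy $\max\{G(0), G(1)\} = M$ are all correct, and mirror the opening moves of the paper's argument. The gap is in eliminating the second alternative. Loeper's principle makes $G$ quasi-convex on $[0,1]$, and a quasi-convex function bounded by $M$ that equals $M$ at an interior $t_0$ can perfectly well satisfy $G \equiv M$ on $[t_0,1]$ while $G<M$ on $[0,t_0)$; quasi-convexity alone imposes no obstruction. Your proposed escape --- vary $\xbar_*$ along the ray and extract a curve through $\xbarhat_0$ inside $\partial_c u(x_e)$ --- is an interpolation argument used where you need extrapolation: every point $\xbar(t)$, $t\in[t_0,1]$, that the bad case hands you lies on the far side of $\xbarhat_0$, \emph{away} from $\xbar=\xbar(0)$. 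The $c$-convexity of $\partial_c u(x_e)$ with respect to $x_e$ and a further application of Lemma~\ref{lem: DASM} along the segment $[-Dc(x_e,\xbarhat_0), -Dc(x_e,\xbar_*)]$ can only constrain points \emph{between} the two endpoints you already control, and the limit $\xbar_*\to\xbarhat_0$ collapses the curve onto a point already known to be in $\partial_c u(x_e)$. Nothing propagates to $\xbar$.

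The idea you are missing --- the crux of the paper's proof --- is to leave the one-variable function $G$ and instead work with sublevel sets in the cotangent coordinates $\Omegacoord{\xbarhat_0}$, using the convexity from Lemma~\ref{lem: c-convex sublevelset}. With $p_0:=-\Dbar c(\xhat_0,\xbarhat_0)$ and $p_e:=-\Dbar c(x_e,\xbarhat_0)$, the inequality $G(t)\le M$ says exactly that $p_e$ lies in each convex sublevel set $\{p : -\ctil(p,\xbar(t))+\ctil(p_0,\xbar(t))\le 0\}$, each of which is supported at $p_0$ by the affine halfspace with normal $-D\ctil(p_0,\xbar(t))$. By formula~\eqref{eqn: modified differential}, the normals at $t=0$ and $t=1$ are anti-parallel (because $t_0\in(0,1)$ lies strictly between the endpoints of the line segment $\pbar(\cdot)$). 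Thus $p_e$ lies in two \emph{opposite} halfspaces, hence on their common boundary hyperplane; since that hyperplane supports the convex set $\{-\ctil(\cdot,\xbar)+\ctil(p_0,\xbar)\le0\}$, the point $p_e$ must lie on its boundary, giving $-\ctil(p_e,\xbar)+\ctil(p_0,\xbar)=0$, i.e.\ $G(0)=M$. (Equivalently: the interior maximum at $t_0$ forces $G'(t_0)=0$, which pins $p_e$ to the tangent hyperplane at $p_0$; the sublevel-set convexity then upgrades this to $G(0)=M$.) The dichotomy is thereby resolved directly and the bad case never has to be ruled out by a separate argument.
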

\begin{proof}
%{\red 
%Fix  an arbitrary point $\xbar \in \partial_c u(\xhat_0)$. It suffices to show that  the function 
%\begin{align*}
% \mountainhat_{\xbar} (\cdot) = -c(\cdot, \xbar) +  c(x_0, \xbar_0) + u(x_0) 
%\end{align*}
%satisfies $\mountainhat_{\xbar} (x_e) = u(x_e)$: this implies $\xbar \in \partial_c u(x_e)$ since $\mountainhat_{\xbar} \le u$ on $\Omega$ by the assumption $\xbar \in \partial_c u (\xhat_0)$. 
%Now, since $\xbar_0$ is in the relative interior of the $c$-convex set $\partial_c u(x_0)$, there is a $c$-segment (with respect to $\xhat_0$) $t\in [0,1] \mapsto \xbar(t) \in  \Omegabar$ from $\xbar$ to another point $\xbar_1 \in \partial_c  u(\xhat_0)$, with $\xbar(t) = \xbar_0$ for some $0< t<1$. 
%Now apply Lemma~\ref{lem: DASM}, to see
%\begin{align*}
% \mountainhat_{\xbar_0} \le \max\{ \mountainhat_{\xbar}, \mountainhat_{\xbar_1}\} \ \text{ on $\Omega$}.
%\end{align*}
%In particular, $\mountainhat_{\xbar_0} (x_e) \le $ 
%
%}
%
Fix a point $x_e\in\subzerohat$ and some $\xbar\in\partial_cu(\xhat_0)$. We first fix notation for the representations of $\xhat_0$, $x_e$ and $\xbar_0$ in the cotangent spaces at $\xbarhat_0$ and $x_0$ by writing
\begin{align*}
\phat_0:&=-\Dbar c(\xhat_0, \xbarhat_0)\in \Omegacoord{\xbarhat_0},\\
p_e:&=-\Dbar c(x_e, \xbarhat_0)\in \Omegacoord{\xbarhat_0},\\
\pbar_0:&=-Dc(\xhat_0, \xbar_0)\in\Omegabarcoord{\xhat_0}.
%\pbarhat_0:&=-D c(\xhat_0, \xbarhat_0)\in\Omegabarcoord{\xhat_0}
%\sublevelsetcoord{\xbarhat_0}:&=-\Dbar c(\sublevelset_{\xbarhat_0}, \xbarhat_0).
\end{align*}

With this notation and using the definitions in \eqref{eqn: modified cost} and \eqref{eqn: modified u} for $\ctil$ and $\util$, we easily see that 
\begin{align*}
 \util &\ge \util(p_0),\\
  \subzerohatcoord &=\{p\in\Omegacoord{\xbar_0}\mid\util(p)=\util(p_0)\},
\end{align*}
and the $\ctil$-function $-\ctil(\cdot, \xbar)+\ctil(p_0, \xbar)+\util(p_0)$ is supporting  to $\util$ from below at $p_0$.
%Then, for any $\pbarhat\in \Omegabarcoord{\xhat_0}$, we denote the corresponding point in $\Omegabar$ by 
%\begin{equation*}
%\xbarhat_{\pbarhat}:=exp^c_{\xhat_0}(\pbarhat),
%\end{equation*}
%\begin{align*}
%\util(p):=u(exp^c_{\xbarhat_0}(p))
%\end{align*}
%the corresponding $c$-function
%\begin{equation*}
%\mountaintilde_{\pbarhat}(\phat):=-c(exp^c_{\xbarhat_0}(\phat), \xbarhat_{\pbarhat})+c(exp^c_{\xbarhat_0}(\phat), \xbarhat_{0})-(-c(\xhat_0, \xbarhat_{\pbarhat})+c(\xhat_0, \xbarhat_{0}))+u(\xhat_0)
%\end{equation*}
%defined for $\phat\in \Omegacoord{\xbarhat_0}$.
We also define the sublevel set
\begin{equation*}
\mathcal{\halfspace}^{\ctil}_{\xbar}:=\{\phat\in \Omegacoord{\xbarhat_0}\mid -\ctil(\phat, \xbar)+\ctil(p_0, \xbar)\leq 0\}
\end{equation*}
and the (affine) half-space
\begin{align*}
\mathcal{\halfspace}_{\xbar}:&=\{\phat\in T^*_{\xbarhat_0}\Omegabar\mid \langle \phat-\phat_0, -D\ctil(p_0, \xbar)\rangle \leq 0\}.
%&=\{\phat\in T^*_{\xbarhat_0}\Omegabar\mid\langle  \phat-\phat_0, [-\Dbar Dc(\xhat_0, \xbarhat_0)]^{-1}(\pbarhat-\pbarhat_0)\rangle\leq 0\}.
\end{align*}
%Note that with this notation, $\xbarhat_0=\xbarhat_{\pbarhat_0}$ and $\mountaintilde_{\pbar_0}(exp^c_{\xbarhat_0}(\phat))\equiv u(x_0)$. 
Now, by Lemma~\ref{lem: c-convex sublevelset} (and since $\util\geq \util(p_0)$ everywhere by construction), the sets $\subzerohatcoord$ and $\mathcal{\halfspace}^{\ctil}_{\xbar}$ are convex subsets of $T^*_{\xbarhat_0}\Omegabar$. Thus by~\eqref{A1} and~\eqref{A2}, and by a differentiation of $\ctil$ at $p_0$ it can be seen that $\mathcal{\halfspace}_{\xbar}$ is a % \marginpar{YH:What do you mean\\ by nondeg. supporting hyperplane?}
supporting halfspace to the set $\mathcal{\halfspace}^{\ctil}_{\xbar}$ at $\phat_0$ (i.e. $\mathcal{\halfspace}^{\ctil}_{\xbar}\subseteq \mathcal{\halfspace}_{\xbar}\subsetneq T^*_{\xbarhat_0}\Omegabar$ and $\phat_0\in\partial\mathcal{\halfspace}^{\ctil}_{\xbar}\cap \partial\mathcal{\halfspace}_{\xbar}$). Additionally, since $p_e\in \subzerohatcoord$ we see that% for any $\pbarhat\in \subdiffcoord{\xhat_0}$,
\begin{equation*}
\util(\phat_0)=\util(p_e)\geq -\ctil(p_e, \xbar)+\ctil(p_0, \xbar)+\util(p_0),
\end{equation*}
 and in particular
 \begin{align*}
 p_e\in\mathcal{\halfspace}^{\ctil}_{\xbar}.
\end{align*}
% \begin{equation*}
%\subzerohatcoord\subseteq \bigcap _{\pbarhat \in \subdiffcoord{\xhat_0}}{\mathcal{\halfspace}^{\ctil}_{\pbarhat}}\subseteq \bigcap _{\pbarhat \in \subdiffcoord{\xhat_0}}{\mathcal{\halfspace}_{\pbarhat}}=:\affinecvalleyhat
% \end{equation*}
 Now, since $\pbarhat_0$ is in the relative interior of $\subdiffcoord{\xhat_0}$ by assumption, for some $\delta>0$ we can find a $\ctil$-segment $\xbar (t)$, defined for $-\delta \le t \le 1$ with respect to $x_0$ with $\xbar$ at one end and $\xbar_0$ in the interior of the curve, namely,
  there exists a line segment $t \in [-\delta, 1] \mapsto \pbar (t) \in \subdiffcoord{x_0}$ such that  
\begin{align*}
& \xbar (t) := \ctilExp{p_0}{\pbar (t)}\in\partial_cu(x_0),\qquad\forall\;t\in[-\delta, 1];\\
&  \xbar_0 = \xbar (0)   \quad  \text{and} \quad \xbar = \xbar (1).
\end{align*}
Then by the same argument as above, 
\begin{align*}
 \util(p_0)\geq -\ctil(p_e, \xbar(-\delta))+\ctil(p_0, \xbar(-\delta))+\util(p_0)
\end{align*}
hence
\begin{align*}
 p_e\in \halfspace^c_{\xbar(-\delta)}:=\{\phat\in \Omegacoord{\xbarhat_0}\mid -\ctil(\phat, \xbar(-\delta))+\ctil(p_0, \xbar(-\delta))\leq 0\}.
\end{align*}
However, again by Lemma~\ref{lem: c-convex sublevelset}, the affine halfspace
\begin{align*}
\mathcal{\halfspace}_{\xbar(-\delta)}:&=\{\phat\in T^*_{\xbarhat_0}\Omegabar\mid \langle \phat-\phat_0, -D\ctil(p_0, \xbar(-\delta))\rangle \leq 0\}\\
&=\{\phat\in T^*_{\xbarhat_0}\Omegabar\mid \langle \phat-\phat_0,\; [-\Dbar Dc(x_0, \xbar_0)]^{-1}(\pbar(-\delta)-\pbar(0))\rangle \leq 0\}\\
&=\{\phat\in T^*_{\xbarhat_0}\Omegabar\mid \langle \phat-\phat_0,\; -\delta[-\Dbar Dc(x_0, \xbar_0)]^{-1}(\pbar(1)-\pbar(0))\rangle \leq 0\}\\
&=\{\phat\in T^*_{\xbarhat_0}\Omegabar\mid \langle \phat-\phat_0, -D\ctil(p_0, \xbar)\rangle \geq 0\}.
\end{align*}
is a nondegenerate supporting halfspace for $\halfspace^c_{\xbar(-\delta)}$ at $p_0$, which implies that  in particular we must have
\begin{align*}
 p_e&\in \{\phat\in T^*_{\xbarhat_0}\Omegabar\mid \langle \phat-\phat_0, -D\ctil(p_0, \xbar)\rangle \geq 0\}.
\end{align*}
Since this is the opposite halfspace to $\mathcal{\halfspace}_{\xbar}$, which is the supporting halfspace to $\mathcal{\halfspace}^{\ctil}_{\xbar}$, we must actually have
\begin{align*}
- \ctil(p_e, \xbar)+\ctil(p_0, \xbar)= 0
\end{align*}
hence
\begin{align*}
 - \ctil(p_e, \xbar)+\ctil(p_0, \xbar)+\util(p_e)&=\util(p_e),\\
  - \ctil(p, \xbar)+\ctil(p_0, \xbar)+\util(p_e)&=- \ctil(p, \xbar)+\ctil(p_0, \xbar)+\util(p_0)\\
  &\leq\util(p),\qquad\forall p\in\Omegacoord{\xbar_0}
\end{align*}
which implies
 %for any $\pbarhat\neq \pbarhat_0\in \subdiffcoord{\xhat_0}$, there exists some $ d>0$ such that $\pbarhat_0- d(\pbarhat-\pbarhat_0)\in \subdiffcoord{\xhat_0}$ as well. We then see that
%\begin{equation*}
%\affinecvalleyhat\subset \halfspace_{\pbarhat}\cap\halfspace_{\pbarhat_0- d(\pbarhat-\pbarhat_0)}= \{\phat\in T^*_{\xbarhat_0}\Omegabar\mid \langle \phat-\phat_0, -D\ctil_{\pbarhat}(\phat_0)+D\ctil_{\pbarhat_0}(\phat_0)\rangle = 0\},
%\end{equation*}
%and since $\halfspace_{\pbarhat}$ is a supporting halfspace to $\halfspace^c_{\pbarhat}$, we have that $-\ctil ( p_e, \xbar_0)\leq -\ctil ( p, \xbar)$ for any $p \in \affinecvalleyhat$ for any $\pbarhat\in \subdiffcoord{\xhat_0}$. However, if we write 
%\begin{equation*}
%p_0:=-Dc(x_0, \xbarhat_0),
%\end{equation*}
%since $p_0\in\subzerohatcoord\subset \affinecvalleyhat$ we find that
%\begin{align*}
%\util(p_e)&= -\ctil(p_e, \xbar_0)\\
%&={\pbarhat_0}(p_e)\\
%&\leq -\ctil (p_e, \xbar) \\
%&\leq \util(p_e)
%\end{align*} 
%for any $\pbarhat\in \subdiffcoord{\xhat_0}$. 
%This implies 
\begin{equation*}
\xbar \in \partial_cu(x_e).
\end{equation*}
Since $\xbar\in\partial_c u (x_0)$ was arbitrary, this finishes the proof. 
\end{proof}

Now, we will find such a ``shifted point'' $\xhat_e$, as discussed in the preceding paragraph of Lemma~\ref{lem: c-subdifferential containment}. Recall that in the proof of the Euclidean case (see Theorem~\ref{thm: euclidean thm}), we relied heavily on the various relationships between the $(n-k)$-dimensional affine set~\eqref{eqn: valley euclidean case} containing the contact set, and the subdifferential $\partial u(x_e)$. However, in the general case the contact set between $u$ and a supporting $c$-function, and the $c$-subdifferential of $u$ at a point are contained in different domains. Thus we must define two systems of coordinates (actually coordinates on the corresponding cotangent spaces), in such a way that an argument similar to that of the proof of Theorem~\ref{thm: euclidean thm} can be applied. %which are geometrically related. 
We step through this construction in Lemma~\ref{lem: choosing a good point} below. The statement is rather long since we include all the relevant assumptions.
 
\begin{lem}\label{lem: choosing a good point}
Suppose that $\Omega\subset\M$, $\Omegabar\subset\Mbar$, $c$, $\mu$, and $\nu$ satisfy the same conditions as in Theorem~\ref{thm: A3w subdifferential bound}.
%{\red 
%Consider two open sets $\Omega\subset \M$ and $\Omegabar\subset\Mbar$ and fix two probability measures; $\mu$ on $\Omega$, and $\nu$ on $\Omegabar$ with bounded supports $\spt{\mu}$ and  $\spt{\nu}$. Assume $\mu$ is absolutely continuous with respect to Lebesque measure. 
%, such that their supports $\spt{\mu}$ and $\spt{\nu}$ have nonempty interior. 
%}\BlueComment{Do we need this last condition? I'm not sure if $\spt{\mu}$ has to have that, and because of the condition below if $\spt{\nu}$ had empty interior the theorem would be vacuous so it's still covered.} %, with $\spt{\nu}=\Omegabar^{\cl}$. }
%(Notice that we do not assume $\spt{\mu} = \Omega^{\cl}$.) 
%Suppose that $c$ satisfies\eqref{A0}--\eqref{A2} and~\eqref{A3w} and that $\Omega$ and $\Omegabar$  are $c$-convex with respect to each other. Further assume that %$\bar \Omega$ is $c$-convex with respect to $\Omega$, and 
%$\Omega$ is strongly $c$-convex with respect to $\spt{\nu}$. 
%Let $u$ be a $c$-convex function giving  
% the Brenier solution for the optimal transportation problem with cost $c$ from $\mu$ to $\nu$, which  satisfies 
% \begin{equation}\label{eqn: upper bound}
%\Leb{\partial_cu(E) \cap \spt \nu }\leq \Lambda \Leb{E \cap \spt \mu} 
%%\Leb{\partial_c u(E)}\leq \Lambda \Leb{E}
%\end{equation}
%for any measurable $E\subset \Omega$ and some constant $\Lambda >0$.}

Now suppose that $\xhat_0\in\Omega$ is a point such that  $\partial_c u (\xhat_0) \cap (\spt \nu)^{int} \ne \emptyset$ 
%and $\partial_c u (\xhat_0)$ has the maximum affine dimension among all $c$-subdifferentials intersecting $(\spt \nu)^{int}$. Namely, 
 with
 \begin{align*}
  k:=\affdim{\left(\subdiffcoord{\xhat_0}\right)}=\max{\{ \affdim{\left(\subdiffcoord{x}\right)} \mid \partial_c u(x) \cap (\spt \nu)^{int} \ne \emptyset\}},
\end{align*}
%\begin{align*}
%\affdim{\left(\partial_c u (\xhat_0)\right)}= \max{\left\{\affdim{\left(\subdiffcoord{x}\right)}\mid x\in\Omega\text{ and }\partial_cu(x)\cap \Omegabar\neq \emptyset\right\}}.
%\end{align*}
%Let $k=\affdim{\left(\subdiffcoord{\xhat_0}\right)}$ 
and assume $k\ge 1$. 
%and 
%\begin{align*}
%k:&=\affdim{\left(\subdiffcoord{\xhat_0}\right)}\\
%&=\max{\left\{\affdim{\left(\subdiffcoord{x}\right)}\mid x\in\Omega\text{ and }\partial_cu(x)\cap \Omegabar\neq \emptyset\right\}}\geq 1.
%\end{align*}
Choose a point  $\xbar_0 \in \Omegabar$ such that  $-\Dbar c(\xhat_0, \xbar_0) \in \Omegabarcoord{x_0}$ is contained in the relative interior of the convex set $\subdiffcoord{\xhat_0}$.  Finally, define the contact set
\begin{align*}
%\mountain_0(x):&=-c(x, \xbar_0)+c(x_e, \xbar_0)+u(x_e)\\
\sublevelset_0:&=\{x\in\Omega\mid u(x)=-c(x, \xbar_0)+c(x_0, \xbar_0)+u(x_0)\}.
%\subzerocoord:&=-\Dbar c\left(\sublevelset_0, \xbar_0\right).
\end{align*}
Then, there exists a point $x_e \in \sublevelset_0$,  a basis $\{\ebar_i\}_{i=1}^n$ on $T^*_{x_e}\Omega$, and 
an orthonormal basis $\{e_i\}_{i=1}^n$ on $T^*_{\xbar_0}\Omegabar$, with coordinate systems on $T^*_{x_e}\Omega$ and $T^*_{\xbar_0}\Omegabar$  centered at the points 
\begin{align}\label{eqn: pbar 0 and p e}
\text{$\pbar_0:=-D c(x_e, \xbar_0) \in T^*_{x_e}\Omega$ and $p_e:=-\Dbar c(x_e, \xbar_0) \in T^*_{\xbar_0}\Omegabar$} 
\end{align}
 with coordinate directions given by $\{\ebar_i\}_{i=1}^n$ and $\{e_i\}_{i=1}^n$, respectively, and positive numbers $R_0$ and $r_0$ with the following properties (Recall Remark~\ref{rmk: coordinates notation} on notation using coordinates):
\begin{align}
%u(x_e)&=-c(x_e, \xbar_0)+c(x_0, \xbar_0)+u(x_0)\label{eqn: equality point}\\
%\sum_{i=1}^n(p_1^ip_2^i)&=\gbar_{\xbar_0}\left(p_1-p_e, p_2-p_e\right),\notag\\
%&\qquad\forall p_1,\ p_2\in T^*_{\xbar_0}\Omegabar\label{eqn: coordinates given riemannian metric}\\
\subzerocoord&\subset \{p\in \Omegacoord{\xbar_0} \subset T^*_{\xbar_0}\Omegabar \mid p'=0,\ \lvert p''+R_0e_n\rvert \leq R_0\},\label{eqn: flat contact set}\\
%\left(p,\pbar\right)
\euclidean{p}{\pbar}&=\langle p-p_e, [-\Dbar Dc(x_e, \xbar_0)]^{-1}(\pbar-\pbar_0)\rangle,\notag\\
&\qquad\forall p\in T^*_{\xbar_0}\Omegabar,\ \pbar\in T^*_{x_e}\Omega,\ \label{eqn: dual basis}\\
B^k_{r_0}(0):&=\{\pbar\in T^*_{x_e}\Omega\mid \lvert\pbar'\rvert\leq r_0,\ \pbar''=0\}\notag\\
&\subset \subdiffcoord{x_e},\label{eqn: subdifferential contains ball}\\
\mathcal{\halfspace}^{\ctil}_{\pm r_0\ebar_i}&\subset \{p\in \Omegacoord{\xbar_0}\mid \pm p^i\leq 0\},\qquad \forall 1\leq i\leq k\label{eqn: sublevelsets contained in halfspaces}.
\end{align}
Here, $\mathcal{\halfspace}^{\ctil}_{\pbar}$ is defined by~\eqref{eqn: c-halfspaces} below.

\end{lem}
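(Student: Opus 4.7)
The plan is to mirror Step~1 of the proof of Theorem~\ref{thm: euclidean thm}, now carried out in the cotangent coordinates provided by the modified cost $\ctil$ (see~\eqref{eqn: modified cost}--\eqref{eqn: modified differential}). The main structural tools are Lemma~\ref{lem: c-convex sublevelset} (convexity of $\subzerocoord$ and of every $c$-sublevel set), Lemma~\ref{lem: c-subdifferential containment} (transport of the $c$-subdifferential from $\xhat_0$ to every point of $\sublevelset_0$), and the bilinear pairing between $T^*_{x_e}\Omega$ and $T^*_{\xbar_0}\Omegabar$ encoded by $[-\Dbar Dc(x_e,\xbar_0)]^{-1}$.

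First I would produce the shifted point $x_e$. As in Step~1 of the Euclidean proof, the boundary-not-to-interior lemma~\cite[Theorem 5.1(b)]{FKM11} combined with~\eqref{eqn: upper bound} and the assumption $\partial_cu(\xhat_0)\cap(\spt\nu)^{\interior}\neq\emptyset$ forces $\sublevelset_0$ to be compact, so $\subzerocoord\subset T^*_{\xbar_0}\Omegabar$ is compact and convex by Lemma~\ref{lem: c-convex sublevelset}. Pick $x_e\in\sublevelset_0$ whose coordinate representation $p_e:=-\Dbar c(x_e,\xbar_0)$ is a boundary point of $\subzerocoord$ with the exterior sphere property. Since $-Dc(\xhat_0,\xbar_0)$ is in the relative interior of $\subdiffcoord{\xhat_0}$, Lemma~\ref{lem: c-subdifferential containment} gives $\partial_cu(\xhat_0)\subset\partial_cu(x_e)$; combined with Corollary~\ref{cor:c-convex c-subdifferential} and the maximality of $k$, this makes $\subdiffcoord{x_e}$ convex of affine dimension exactly $k$, and (by smoothness of the change-of-coordinates diffeomorphism between $\Omegabarcoord{\xhat_0}$ and $\Omegabarcoord{x_e}$) $\pbar_0:=-Dc(x_e,\xbar_0)$ lies in its relative interior.

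Next I would set up the bases. Let $V_1\subset T^*_{x_e}\Omega$ be the $k$-dim linear subspace parallel to the affine hull of $\subdiffcoord{x_e}$, and $W_2\subset T^*_{\xbar_0}\Omegabar$ the $(n-k)$-dim annihilator of $[-\Dbar Dc(x_e,\xbar_0)]^{-1}(V_1)$. Choose an orthonormal basis $\{e_i\}_{i=k+1}^n$ of $W_2$ with $-e_n$ pointing from $p_e$ into $\subzerocoord$ along the exterior-sphere normal, extend to an orthonormal basis $\{e_i\}_{i=1}^n$ of $T^*_{\xbar_0}\Omegabar$, and take $\{\ebar_i\}_{i=1}^n$ to be dual to $\{e_i\}$ under the pairing $\langle\cdot,[-\Dbar Dc(x_e,\xbar_0)]^{-1}(\cdot)\rangle$; this gives~\eqref{eqn: dual basis} directly and forces $\ebar_1,\dots,\ebar_k$ to span $V_1$, so shrinking $r_0>0$ gives~\eqref{eqn: subdifferential contains ball}. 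For~\eqref{eqn: flat contact set}, observe that for each $\xbar\in\partial_cu(\xhat_0)$, Lemma~\ref{lem: c-subdifferential containment} applied at every $x\in\sublevelset_0$ yields $\ctil(p,\xbar)=\ctil(p_e,\xbar)$ for all $p\in\subzerocoord$; thus $\subzerocoord$ sits inside the common level set of the $k$-parameter family $\{\ctil(\cdot,\xbar)\}_{\xbar\in\partial_cu(\xhat_0)}$. By~\eqref{eqn: modified differential} together with the relative interior property, the differentials span a $k$-dim subspace at every $p\in\subzerocoord$, so this common level set is locally a smooth $(n-k)$-submanifold with tangent space $W_2$ at $p_e$. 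Convexity of $\subzerocoord$ then forces each line segment $[p_e,q]$ with $q\in\subzerocoord$ to lie in this submanifold, forcing its tangent direction $q-p_e$ into $W_2$ and yielding $\subzerocoord\subset\{p'=0\}$; taking $R_0>0$ sufficiently large that the bounded set $\subzerocoord$ fits inside $\{p'=0,\lvert p''+R_0e_n\rvert\le R_0\}$ (using the exterior-sphere data at $p_e$) completes~\eqref{eqn: flat contact set}. Finally~\eqref{eqn: sublevelsets contained in halfspaces} follows from computing $D\ctil(p_e,\cExp{x_e}{\pm r_0\ebar_i})=\mp r_0 e_i$ via~\eqref{eqn: modified differential} in the dual bases, so that the convex set $\mathcal{\halfspace}^{\ctil}_{\pm r_0\ebar_i}$ (convex by Lemma~\ref{lem: c-convex sublevelset}) has supporting hyperplane $\{\pm p^i=0\}$ at $p_e$ and lies on the $\{\pm p^i\le0\}$ side.

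The main obstacle is proving~\eqref{eqn: flat contact set}: in the Euclidean case the linear structure of $\partial u(\xhat_0)$ forces the contact set into a flat affine subspace immediately, but here one must delicately combine the convexity of $\subzerocoord$ (from Lemma~\ref{lem: c-convex sublevelset}), the tangential perpendicularity provided by~\eqref{eqn: modified differential}, and the global $c$-subdifferential containment from Lemma~\ref{lem: c-subdifferential containment} to force a convex set lying inside a curved level-set intersection to actually reside in a single flat affine subspace, while simultaneously matching this flat subspace with the dual basis chosen on the other cotangent space.
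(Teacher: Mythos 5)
Your proposal is correct and follows the same overall structure as the paper's proof: produce $x_e$ via the boundary-not-to-interior lemma and the exterior sphere condition on the compact convex set $\subzerocoord$, invoke Lemma~\ref{lem: c-subdifferential containment} to transport the $c$-subdifferential, set up the bi-orthogonal bases via the pairing~\eqref{eqn: dual basis}, and read off~\eqref{eqn: subdifferential contains ball} and~\eqref{eqn: sublevelsets contained in halfspaces}. The one place you deviate is in the verification of~\eqref{eqn: flat contact set}. The paper shows $\subzerocoord\subseteq\bigcap_{\pbar}\mathcal{\halfspace}^{\ctil}_{\pbar}\subseteq\bigcap_{\pbar}\mathcal{\halfspace}_{\pbar}=\affinecvalley$, using only that each $\mountaintilde_{\pbar}$ is supporting to $\util$ from below (so $\subzerocoord$ lies in each $\ctil$-sublevel set) together with the fact that the affine halfspaces $\mathcal{\halfspace}_{\pbar}$ intersect in an $(n-k)$-dimensional affine set because $\pbar_0$ is in the relative interior of $\subdiffcoord{x_e}$. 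You instead extract the stronger fact that $\ctil(\cdot,\xbar)$ is \emph{constant} on $\subzerocoord$ for each $\xbar\in\partial_cu(\xhat_0)$ (which indeed follows from applying Lemma~\ref{lem: c-subdifferential containment} to each pair of points in $\sublevelset_0$), place $\subzerocoord$ inside the resulting curved common level set, and then invoke convexity of $\subzerocoord$ plus first-order tangency at $p_e$ to force $\subzerocoord$ into the flat subspace $p_e+W_2$. Both arguments are short and correct; the paper's is a purely convex-geometric halfspace argument needing only the supporting inequality, while yours uses the stronger constancy and a submanifold-tangency step. (A small caution: you assert the differentials span a $k$-dimensional subspace at \emph{every} $p\in\subzerocoord$, which is more than you need and not immediate away from $p_e$; the tangency argument only requires smoothness of the level set near $p_e$, where the spanning follows from the relative-interior hypothesis.) Everything else—compactness of $\sublevelset_0$, the relative-interior transfer to $\pbar_0$ via the diffeomorphism, the dual-basis construction and the resulting coordinate computations for~\eqref{eqn: dual basis}--\eqref{eqn: sublevelsets contained in halfspaces}—matches the paper.
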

\begin{proof}
%Since $k\geq 1$, there exists a point in the relative interior of the convex set $\subdiffcoord{\xhat_0}$. We now set $\xbar_0$ to be the image of this point under $\cExp{\xhat_0}{\cdot}$, and 
% For the contact set  $\sublevelset_0$,  consider its representative  $\subzerocoord$ in $T^*_{\xbar_0}\Omegabar$ as in the statement of the lemma above. 
Since $\Omega$ is strongly $c$-convex with respect to $\spt \nu$, and $\partial_cu(x_0)\cap (\spt{\nu}^{\interior}\neq \emptyset$, by \eqref{eqn: upper bound} we may use the boundary-not-to-interior lemma in \cite[Theorem 5.1 (b)]{FKM11} to see that the closure of $\subzero$ is contained in the open set $\Omega$, i.e. $\subzero^{\cl} \subset\Omega$.  
This shows in fact, $\subzero = \subzero^{\cl}$.
Thus, $\subzerocoord$ is closed and also by Lemma~\ref{lem: c-convex sublevelset} it is convex.  Therefore, there exists a point $p_e\in\partial\subzerocoord$ with the exterior sphere condition and with $p_e\in \Omegacoord{\xbar_0}^{\interior}$, which will lead to \eqref{eqn: flat contact set} after choosing appropriate coordinates below. 
Define
\begin{equation*}
x_e:=\cExp{\xbar_0}{p_e}\in\partial \sublevelset_0.
\end{equation*}
%we immediately obtain~\eqref{eqn: equality point}. 
Now, since $\partial_cu(\xhat_0)\cap (\spt \nu)^{\interior}\neq \emptyset$ by assumption and $\partial_cu(\xhat_0)\subset\partial_cu(x_e)$ by Lemma~\ref{lem: c-subdifferential containment} above, we find $\partial_cu(x_e)\cap (\spt\nu)^{\interior}\neq \emptyset$ as well.
% in particular $x_e\in\Omega$ (and $p_e\in\Omegacoord{\xbar_0}$).
The inclusion  $\partial_cu(\xhat_0)\subset\partial_cu(x_e)$, and %since affine dimension is preserved by diffeomorphism, 
the maximality of $k$ implies
\begin{equation*}
\affdim{\left(\subdiffcoord{x_e}\right)}=k.
\end{equation*}
Also, since the map $-Dc(x_e, \cExp{\xbar_0}{\cdot}): \subdiffcoord{\xhat_0}\to \subdiffcoord{x_e}$ is a diffeomorphism, the point
\begin{align*}
\pbar_0:=-Dc(x_e, \xbar_0) 
\end{align*}
is contained in the relative interior of the convex set $\subdiffcoord{x_e}$. 

We must now determine an $(n-k)$-dimensional affine space containing the contact set, as in~\eqref{eqn: valley euclidean case} (we must, however, point out that the $(n-k)$-dimensional set determined here is only for the purpose of defining coordinates, in the proof of the actual theorem we will be forced to choose a curved analogue of~\eqref{eqn: valley euclidean case}). To this end, define for each $\pbar\in\subdiffcoord{x_e}$ the point
\begin{align*}
\xbar_{\pbar}:&=\cExp{x_e}{\pbar}\in\partial_cu(x_e).
\end{align*}
 We use the definitions \eqref{eqn: modified cost} and \eqref{eqn: modified u} for $\ctil$ and $\util$ (modified with respect to the fixed point $\xbar_0$), and 
consider  the $\ctil$-functions associated to $\xbar_{\pbar}$,
\begin{align}\label{eqn: mountain tilde}
\mountaintilde_{\pbar}(p):&=-\ctil(p, \xbar_{\pbar})+\ctil(p_e, \xbar_{\pbar})+\util(p_e),
\end{align}
the $\ctil$-sublevel sets
\begin{align}\label{eqn: c-halfspaces}
\mathcal{\halfspace}^{\ctil}_{\pbar}:&=\{p\in \Omegacoord{\xbar_0}\mid \mountaintilde_{\pbar}(p)\leq \util(p_e)\},
\end{align}
and affine half-spaces
\begin{align*}
\mathcal{\halfspace}_{\pbar}:&=\{p\in T^*_{\xbar_0}\Omegabar\mid\langle  p-p_e, -D\mountaintilde(p_e)\rangle\leq 0\}.
\end{align*}
As in the proof of Lemma~\ref{lem: c-subdifferential containment} above, by Lemma~\ref{lem: c-convex sublevelset}, each set %$\subzerocoord$ and 
$\mathcal{\halfspace}^{\ctil}_{\pbar}$ is convex in  $T^*_{\xbar_0}\Omegabar$, and by~\eqref{A1} and~\eqref{A2}, $\mathcal{\halfspace}_{\pbar}$ is a supporting halfspace for the set $\mathcal{\halfspace}^{\ctil}_{\pbar}$ at $p_e$ for each $\pbar\in \Omegabarcoord{x_e}$. Since 
\begin{align*}
\subzerocoord=\{p\in\Omegacoord{\xbar_0}\mid \util(p)=\util(p_e)\}
\end{align*}
and each $\mountaintilde_{\pbar}$ is supporting to $\util$, 
%after some calculation 
we find that
\begin{align*}
\subzerocoord&\subseteq \bigcap _{\pbar \in \subdiffcoord{x_e}}{\mathcal{\halfspace}^{\ctil}_{\pbar}}\\
&\subseteq \bigcap _{\pbar \in \subdiffcoord{x_e}}{\mathcal{\halfspace}_{\pbar}}\\
&=:\affinecvalley.
\end{align*}
Since $\pbar_0$ is in the relative interior of the $k$-dimensional set $\subdiffcoord{x_e}$, it follows that $\affinecvalley$ is an $(n-k)$-dimensional affine set.

We now choose an orthonormal basis (in the Riemannian metric $\gbar_{\xbar_0}$ on $T^*_{\xbar_0}\Omegabar$) $\{e_i\}_{i=1}^n$ of $T^*_{\xbar_0}\Omegabar$ such that $\{e_i\}_{i=k+1}^n$ span the linear subspace $\left(\affinecvalley-p_e\right)$. 
%If we denote by $p$ a coordinate system centered at $p_0$ with the positive coordinate directions given by $\{e_i\}_{i=1}^n$, we obtain~\eqref{eqn: coordinates given riemannian metric}. 
Also, since $p_e$ satisfies the exterior sphere property in $\subzerocoord$, we may choose the basis and coordinate directions so that~\eqref{eqn: flat contact set} is satisfied for some $R_0>0$. 

Next, we must determine the appropriate sense of ``orthogonality'' between the sets $\affinecvalley$ defined above, and $\subdiffcoord{x_e}$. Choose a collection of vectors $\{\ebar_i\}_{i=1}^n$ in $T^*_{x_e}\Omega$ given by the dual relations
\begin{align*}
\delta_{ij}&=\langle e_i, [-\Dbar Dc(x_e, \xbar_0)]^{-1}\ebar_j\rangle
\end{align*}
for each $1\leq i,\ j\leq n$. Since $\{\ebar_i\}_{i=1}^n$ is actually a basis by~\eqref{A2}, we may also define a coordinate system (denoted by $\pbar$) centered at $\pbar_0$ with positive coordinate directions given by $\{\ebar_i\}_{i=1}^n$. Then, for any $p\in T^*_{\xbar_0}\Omegabar$ and $\pbar\in T^*_{x_e}\Omega$, we  calculate
\begin{align*}
 \langle p-p_e, [-\Dbar Dc(x_e, \xbar_0)]^{-1}(\pbar-\pbar_0)\rangle&=\sum_{i,j=1}^n\langle p^ie_i, [-\Dbar Dc(x_e, \xbar_0)]^{-1}\pbar^j\ebar_j\rangle\\
 &=\sum_{i,j=1}^np^i\pbar^j\langle e_i, [-\Dbar Dc(x_e, \xbar_0)]^{-1}\ebar_j\rangle\\
 &=\euclidean{p}{\pbar}
\end{align*}
and hence obtain~\eqref{eqn: dual basis}. We easily see from the definition of $\affinecvalley$ that $\{\ebar_i\}_{i=1}^k$ spans the linear subspace $\left(\aff{\left(\subdiffcoord{x_e}\right)}-\pbar_0\right)$. 
Since $\pbar_0$ is in the relative interior of the convex set $\subdiffcoord{x_e}$, this implies that we obtain~\eqref{eqn: subdifferential contains ball} for some $r_0>0$. Finally, using~\eqref{eqn: dual basis} we calculate for any fixed $1\leq i\leq k$ that
 \begin{align*}
%&\{p\in \Omegacoord{\xbar_0}\mid \mountaintilde_{\pm r_0\ebar_i+\pbar_0}(p)\leq \util(p_e)\}\\
 \mathcal{\halfspace}^{\ctil}_{\pm r_0\ebar_i}&\subset\mathcal{\halfspace}_{\pm r_0\ebar_i}\cap\Omegacoord{\xbar_0}
\\
%&\subset\{p\in T^*_{\xbar_0}\Omegabar\mid\langle [-\Dbar Dc(x_e, \xbar_0)]^{-1}(\pm r_0\ebar_i+\pbar_0-\pbar_0), p-p_0\rangle\leq 0\}\\
&=\{p\in \Omegacoord{\xbar_0}\mid\pm r_0\sum_{j=1}^n{p^j\langle e_j, [-\Dbar Dc(x_e, \xbar_0)]^{-1}\ebar_i\rangle}\leq 0\}\\
&=\{p\in \Omegacoord{\xbar_0}\mid\pm p^i\leq 0\}
\end{align*}
%\marginpar{YH: the co-ordinates are \\centred around $\pbar_0$ already.}
to obtain the final property~\eqref{eqn: sublevelsets contained in halfspaces}.
\end{proof}

\section{Proof of Theorem~\ref{thm: A3w subdifferential bound}}\label{section: main proof}
 We are finally ready to give the proof of Theorem~\ref{thm: A3w subdifferential bound}.
The results in Section~\ref{section: general c} allow us to make geometric constructions similar to those in the proof of Theorem~\ref{thm: euclidean thm}. Still, the nonlinearity of the cost function produces a number of additional difficulties. % in the proof of the main theorem as well.

For one, the set $\affinecvalley$ from the proof of Lemma~\ref{lem: choosing a good point} above is not sufficiently geometrically motivated enough to fill the role of $\affinevalley$ from the proof of Theorem~\ref{thm: euclidean thm}. Instead, we must use $\curvedvalley$ (defined in~\eqref{eqn: valley A3w case} below) which is no longer an $(n-k)$-dimensional affine set, but a locally smooth $(n-k)$-dimensional submanifold of $\Omega$. We must carefully consider its relation to the coordinate system we defined above in Lemma~\ref{lem: choosing a good point}. Also, with a general cost $c$, we cannot expect the vanishing of terms due to orthogonality that we enjoyed at various points in the proof of Theorem~\ref{thm: euclidean thm}. Instead, we must  carefully keep track of the size of such error terms and show that they can be ultimately controlled. A side effect of this last difficulty is that we must alter the definition of $f(d)$ to account for the rate of decay of $u(x)$ as $x$ approaches the contact set.
%consider two different cases depending on the rate of decay of $f(d)$ as $d$ approaches zero.

\begin{rmk}\label{rmk: remark on constants}
In the interest of readability, we make the following comment concerning constants in the following proof. We will absorb all constants that depend only on the cost function, the domains $\Omega$ and $\Omegabar$, and the constants $n$, $k$, $r_0$, $R_0$ which are fixed at the beginning of the proof, into the single constant $M$. There are three constants $\constOne$, $\constThree$, and $\constFour$ which are introduced and left ``to be determined'',
%In addition to $\constOne$ which plays a role similar to {\red the one} in the proof of Theorem~\ref{thm: euclidean thm}, there are two more constants, $\constThree$ and $\constFour$, that are introduced and left ``to be determined'': 
we keep careful track of these constants throughout the proof. These constants are necessary in order to control various error terms that arise, due to the nonlinearity of the cost function. Eventually we will determine that it is sufficient to take $\constOne$, $\constThree$, and $\constFour$ to be small, depending on $M$ but independent of the parameter $d$.
\end{rmk}

\subsection{The Proof}
The proof is outlined similarly as the one for Theorem~\ref{thm: euclidean thm}.
%\begin{proof}[{\bf Proof of Theorem~\ref{thm: A3w subdifferential bound}}]
%{\red Throughout this proof we use the notation and quantities given in Lemma~\ref{lem: choosing a good point}. }
\begin{proof}[{Proof of Theorem~\ref{thm: A3w subdifferential bound}}]\ 

\subsubsection*{Step 0}
Suppose that the theorem does not hold. Since $\partial_c u (x) = \cExp{x} {\partial u (x)}$ for each $x \in \Omega$ by Corollary~\ref{cor: local to global}, we see that this implies
\begin{align*}
k:&=\affdim{\left(\subdiffcoord{\xhat_0}\right)}\\
&=\max{\left\{\affdim{\left(\subdiffcoord{x}\right)}\mid x\in\Omega\text{ and }\partial_cu(x)\cap  (\spt \nu)^{int} \neq \emptyset\right\}} 
\end{align*}
 for some $\xhat_0\in \Omega$ with $k\geq\frac{n}{2}$. 
  Note that by~\eqref{eqn: upper bound} we must have $k\leq n-1$. 
 
\subsubsection*{Step 1}
 We now apply Lemma~\ref{lem: choosing a good point} above to obtain points $x_e\in \Omega$, $\xbar_0\in \Omegabar$  (and corresponding $p_e$ and $\pbar_0$, see \eqref{eqn: pbar 0 and p e}), 
 coordinate bases on $T^*_{\xbar_0}\Omegabar$ and $T^*_{x_e}\Omega$, denoted by $\{e_i\}_{i=1}^n$ and $\{\ebar_i\}_{i=1}^n$ respectively, and positive numbers $R_0$ and $r_0$. 
 Instead of working with the original $c$ and $u$, we will use in the following, the modified functions $\ctil$ and $\util$ with respect to the point $\xbar_0$ (see the definitions \eqref{eqn: modified cost} and \eqref{eqn: modified u} ).
  By adding a constant  we may assume $\util(p_e)=0$,  and by translations of the domains assume  that both
  \begin{align*}
  \text{$p_e$ and $\pbar_0=0$}
  \end{align*}
   (hence both coordinate systems on $\Omegacoord{\xbar_0}$ and $\Omegabarcoord{x_e}$ are centered at the origin).
In particular, this implies
$$
\util(0) = 0 ,  \  \util \ge 0,  \ \text{ and } \ 0 \in  \partial_{\ctil} \util (0).
$$
 
   Also, by an abuse of notation we will identify a point $p\in \Omegacoord{\xbar_0}$ or $\pbar\in \Omegabarcoord{x_e}$ with the coordinates given by $\{e_i\}$ or $\{\ebar_i\}$. In particular, note that by property~\eqref{eqn: dual basis} we have
\begin{equation}\label{eqn:p dot pbar}
 \euclidean{p}{ \pbar}=\langle p, [-\Dbar Dc(x_e, \xbar_0)]^{-1}\pbar\rangle
\end{equation}
for any $p\in \Omegacoord{\xbar_0}$ and $\pbar\in \Omegabarcoord{x_e}$, while by~\eqref{A1} and~\eqref{A2}, the length of any vector $\pbar$ in the Riemannian metric on $T^*_{x_e}\Omega$ is comparable to the Euclidean length $\lvert \pbar\rvert$ measured in these coordinates. In the remainder of the proof, it is helpful to keep in mind that $\pbar' \in \partial_{\ctil} \util(0)$ for any $|\pbar'| \le r_0$.

\subsubsection*{Step 2}

 Again as in the proof of Theorem~\ref{thm: euclidean thm}, we aim to construct two families of sets $\cyl_d\subset \Omegacoord{\xbar_0}$ and $\cylbar_d\subset\Omegabarcoord{x_e}$ depending on a small parameter $d$: the ultimate aim is to apply inequality~\eqref{eqn: upper bound} and obtain a contradiction in the volume comparison as $d$ approaches zero. First, define the set (with notation as in Lemma~\ref{lem: choosing a good point} and \eqref{eqn: mountain tilde} above)
\begin{equation}\label{eqn: valley A3w case}
\curvedvalley:=\bigcap_{i=1}^k\{p\in\Omegacoord{\xbar_0}\mid \mountaintilde_{r_0\ebar_i}(p)=0\}
\end{equation}
(compare with~\eqref{eqn: valley euclidean case}): by transversality of each level set $\{p\in\Omegacoord{\xbar_0}\mid \mountaintilde_{r_0\ebar_i}(p)=0\}$, $\curvedvalley$ is a locally $n-k$-dimensional submanifold, but not necessarily flat.

 Choose 
 \begin{align*}%\label{eqn: rho_d a3w}
 \rho_d  := \frac{\pi}{2}+ \frac{ d}{4R_0},
\end{align*}
 for small $d>0$, 
  and define the $n$-dimensional cone in $\Omegacoord{\xbar_0}$ with angle $\rho_d$ by
 \begin{equation*}
 \cone_{\rho_d}:=\{p\in \Omegacoord{\xbar_0}\mid \lvert p\rvert\cos{\rho_d}\leq p^n\}.
 \end{equation*}
Also, % for $d$, $\rho$, we 
define
 \begin{equation*}
\base_{d, \rho_d}:=\{p\in\Omegacoord{\xbar_0} \mid \lvert p''\rvert= d\} \cap\cone_{\rho_d}.
\end{equation*}
%For $0<\rho<\pi$, we define the $n$-dimensional cone in $\Omegacoord{\xbar_0}$ by
% \begin{equation*}
% \cone_\rho:=\{p\in \Omegacoord{\xbar_0}\mid \lvert p\rvert\cos{\rho}\leq p^n\}.
% \end{equation*}
%Also, for $d$, $\rho$, we define
% \begin{equation*}
%\base_{d, \rho}:=\{p\in\Omegacoord{\xbar_0} \mid \lvert p''\rvert= d\} \cap\cone_{\rho}.
%\end{equation*}
 Then from the same calculations leading up to~\eqref{eqn: rho d}  we obtain by~\eqref{eqn: flat contact set} that $$\base_{d, \rho_d} \cap \subzerocoord = \emptyset.$$
% Notice that  for $d$ small, 
%\begin{equation*}
%\cos{\left(\frac{\pi}{2}+\frac{ d}{4R_0}\right)}=-\sin{\left( \frac{d}{4R_0}\right)}> -\frac{d}{2R_0},
%\end{equation*}
In particular,
 \begin{equation*}\label{eqn: strict positivity on boundary A3w case}
  \util(p)>0 \quad \text{for $p\in \base_{d, \rho_d}$}.
 \end{equation*}
 % where
% \begin{align*}\label{eqn: rho d a3w}
%\rho_d:= \frac{\pi}{2}+ \frac{ d}{4R_0}.
%\end{align*}
%{\red Notice that the set $\base_{d, \rho_d}$ stays  away from $\subzerocoord$ (the distance depends on $d$), 
Since $\Omega$ is bounded, as in the proof of Theorem~\ref{thm: euclidean thm},  we can define the strictly positive quantity
\begin{equation*}
f(d):=  \min\left[d^2, \min_{p\in \base_{d, \rho_d}}{\util(p)}\right]>0
\end{equation*}
for each $d>0$ sufficiently small. The definition of $f(d)$ differs from the proof of Theorem~\ref{thm: euclidean thm}, the reason being that we cannot rely on orthogonality to fully eliminate all of the extraneous terms as we did for the Euclidean cost.

 Before we construct the families of sets $\cyl_d$ and $\cylbar_d$ to be used in the volume comparison argument, we first give an appropriate parametrization of  the possibly curved set $\curvedvalley$. Recall that we are equating a point in $\Omegacoord{\xbar_0}$ with a point in $\R^n$ by its coordinates with respect to the basis $\{e_i\}$. Define the mapping
%Before beginning the proof in earnest, we need a few more constructions. This first construction is necessitated by the fact that $\curvedvalley$ may be curved instead of an affine space. Define the mapping
$F: \Omegacoord{\xbar_0}\to \R^k$ by
\begin{equation*}
F(p):=(\mountaintilde_{r_0\ebar_1}(p),\ldots, \mountaintilde_{r_0\ebar_k}(p)), 
\end{equation*}
and $\psi: \Omegacoord{\xbar_0}\to \R^n$ by
\begin{equation*}
\psi(p):= (F(p), p'').
\end{equation*}
 Note that by construction $F(0)=0$, thus $\psi (0)=0$. 
By the inverse function theorem, $\psi$ is a diffeomorphism for $p$ in some small neighborhood (depending only on $c$) of the origin (recall, which is contained in the interior of $\Omegacoord{\xbar_0}$),  and we can write the inverse as
\begin{equation*}
\psi^{-1}(p)=(G(p), p'')  %\quad \text{for $p \in \R^n$},
\end{equation*}
for some map $G: \R^n\to \{p\in\Omegacoord{\xbar_0}\mid p''=0\}$. Notice that  $ F(\psi^{-1} (p'') ) = 0 $ thus, 
 \begin{align}\label{eqn: in the curved valley}
\psi^{-1}(p'') \in \curvedvalley.
\end{align}
The vector $p - \psi^{-1}(p'') $ is in the span of the first $k$ basis vectors $e_i$, $1\le i\le k$, and gives the displacement of $p$ from the set $\curvedvalley$. This vector will be used crucially in Step 3. The size of this displacement is
\begin{align*}
\lvert p-\psi^{-1}(p'')\rvert&= \lvert p'-G(p'')\rvert \notag\\
&= \lvert G(\psi(p))-G(p'')\rvert \notag\\
&\sim_M \lvert \psi(p)-p''\rvert \notag\\
&= \lvert F(p)\rvert
\end{align*}
i.e.,
\begin{align}
M^{-1}\lvert F(p)\rvert\leq\lvert p-\psi^{-1}(p'')\rvert\leq M\lvert F(p)\rvert
\label{eqn: p-psi inverse bound}.
\end{align}
Also note that since $F$  and $G$ have Lipschitz constants depending only on $c$ and $\spt {\nu}$, we can calculate 
\begin{align}
\lvert p\rvert&\leq \lvert p'\rvert+\lvert p''\rvert\notag\\
&=  \lvert G(\psi(p))\rvert+ \lvert p''\rvert\notag\\
&\leq \lvert G(\psi(p))-G(p'')\rvert+\lvert G(p'')-G(0)\rvert+ \lvert p''\rvert\notag\\
&\leq M(\lvert \psi(p)-p''\rvert+\lvert p''\rvert)+ \lvert p''\rvert\notag\\
&\leq M(\lvert F(p)\rvert+ \lvert p''\rvert)\label{eqn: p bound}
\end{align} 
while since $\psi(0)=0$ we find
\begin{align}\label{eqn: psi inverse bound}
 \lvert \psi^{-1}(p)\rvert \sim_M \lvert p\rvert.
\end{align}

%To construct the geometric shapes for volume comparison, first, define the $(n-k)$-dimensional cone in $\Omegabarcoord{x_e}$ by
%\begin{equation*}
%\conebar_\rho:=\{\pbar\in \Omegabarcoord{x_e}\mid \lvert \pbar''\rvert\cos{\rho}\leq \pbar^n\}.
%\end{equation*}
%
%We must now consider two cases, depending on the rate of decay of $f(d)$ as $d\to 0$, the sets $\cyl_d$ and $\cylbar_d$ will differ slightly between the two cases.
%
%{\bf{\underline{Case 1:}}} First suppose that 
%\begin{equation}\label{eqn: case 1 inequality}
%\frac{f(d)}{d}\leq \alpha d
%\end{equation}
%for some $\alpha>0$.
 Now, define the desired family of sets in $\Omega$ by
\begin{equation*}
\cyl_d:=\{p\in \Omegacoord{\xbar_0}\mid \lvert F(p)\rvert\leq f(d),\ \lvert p''\rvert \leq  d\}\cap\cone_{\rho_d}.
\end{equation*} 
If $d$ is sufficiently small, the set
$$\{p\in \Omegacoord{\xbar_0}\mid \lvert F(p)\rvert\leq  f(d),\ \lvert p''\rvert \leq   d\}$$ is diffeomorphic to the set 
\begin{equation*}
\{x\in \R^n\mid \lvert x'\rvert\leq M f(d),\ \lvert x''\rvert\leq  d\}, 
\end{equation*}
hence we have the volume bound
\begin{align}\label{eqn: cyld volume}
 \Leb{\cyl_d}\lesssim f(d)^kd^{n-k}.
\end{align}

 To define the other family of sets $\cylbar_d$ in $\Omegabar$, consider 
 \begin{equation*}
\conebar_{\constOne (\rho_d-\pi/2)}:=\{\pbar\in \Omegabarcoord{x_e}\mid \lvert \pbar''\rvert\cos\big({\constOne (\rho_d-\pi/2)}\big)\leq \pbar^n\},
\end{equation*}
 cylinder whose base is an $(n-k)$-dimensional cone in $\Omegabarcoord{x_e}$ with angle ${\constOne (\rho_d-\pi/2)}$.

Using this we define the set  (compare with \eqref{eqn: Euclidean Wbar d})
\begin{align}\label{eqn: general Wbar d}
\bar \base_d := \conebar_{\constOne (\rho_d-\pi/2)} \cap \Big\{ \pbar \in \Omegabarcoord{x_e} \mid \pbar'\in \conv{\{ 0,\ \constThree d\ebar_i\mid 1\leq i\leq k\}},\ \lvert\pbar''\rvert = \constFour \frac{f (d)}{d}\Big\} 
\end{align}
for appropriate small  positive constants $\constOne$, $\constThree$, and $\constFour$ to be determined, independent of $d$.
We now define the target shape $\cylbar_d$ as the cone over  $\bar\base_d$ with vertex $0$, that is,
\begin{equation*}
\cylbar_d:= \bigcup_{\pbar \in \bar\base_d} [0, \pbar] %\text{\rm cone}[0, \bar \base_d] 
%\{\xbar\in\R^n\mid \lvert \xbar'\rvert\leq d,\ \lvert \xbar''\rvert\leq  \frac{r_0f (d)}{2d}\}\cap \conebar_{\constOne (\rho_d - \pi/2)}
\end{equation*}
where $[0, \pbar]$ denotes the line segment (in the convex set $\Omegabarcoord{x_e}$) from $0$ to $\pbar$. By a calculation similar to~\ref{eqn: vol cylbar}, we see $\cylbar_d$ has the volume bound
\begin{align}\label{eqn: general cylbar vol}
 \left(\frac{f(d)}{d} \right)^{n-k} d^{n-k-1}d^k\sim \Leb{\cylbar_d}
\end{align}
%
%
%\begin{equation*}
%\cylbar_d:=\{\pbar\in\Omegabarcoord{x_e}\mid \pbar' \in \ch{\{0, \constThree\ebar_i,\ 1\leq i\leq k\}},\ \lvert \pbar''\rvert\leq  \frac{\constFour r_0f(d)}{d}\}\cap \conebar_{\constOne (\rho_d-\pi/2)}
%\end{equation*}
% for some small positive constants $\constOne$, $\constThree$, and $\constFour$ to be determined.
% 
%
%\green \begin{equation*}
%\cylbar_d:=\{\pbar\in\Omegabarcoord{x_e}\mid \pbar' \in \ch{\{0, \constThree\ebar_i,\ 1\leq i\leq k\}},\ \lvert \pbar''\rvert\leq  \frac{\constFour r_0f(d)}{d}\}\cap \conebar_{\constOne (\rho_d-\pi/2)}
%\end{equation*}
% for some small positive constants $\constOne$, $\constThree$, and $\constFour$ to be determined.
% }
%
%
 
 \subsubsection*{Step 3}
This step will be the most involved. 
We now claim that for some appropriate choice of $\constOne$, $\constThree$ and  $\constFour$ above (independent of $d$), % (given by the restrictions~\eqref{eqn: constFour restriction},~\eqref{eqn: second constFour restriction}), 
for each $d>0$ sufficiently small we have 
\begin{equation}\label{eqn: show this inequality a3w case}
\mountaintilde_{\pbar}(p)\leq \util(p) \text{ for every $p\in \partial \cyl_d$ as long as  $\pbar\in \cylbar_d$}
\end{equation}
 (compare with~\eqref{eqn: show this inequality euclidean case}).
 Notice that  $\util \ge 0$ and $\mountaintilde_{0} \equiv 0$ (because $\ctil (\cdot, \xbar_0) \equiv 0$),  moreover, $\mountaintilde_{\pbar}$ satisfies  Loeper's maximum principle (Lemma~\ref{lem: DASM}) along each line segment in $\Omegabarcoord{x_e}$. Therefore,  by the definition of $\cylbar_d$ we see that it is sufficient to show~\eqref{eqn: show this inequality a3w case} for each $\pbar \in \bar\base_d$.

Fix $\pbar \in \bar\base_d$.
%{\green Indeed, first consider {\red  $\pbar \in \cylbar_d$} with $\pbar'=\constThree d\ebar_i$ for some fixed $1\leq i\leq k$. }
 As $$\partial \cyl_d \subset  \base_{d, \rho_d} \cup \{ p \in \Omegacoord{\xbar_0} \mid  |F(p)| = f(d)\} \cup  \partial \cone_{\rho_d},$$
there are three cases for $p\in \partial \cyl_d$:
\begin{enumerate}
\item $p\in \base_{d, \rho_d} \cap \partial \cyl_d$;
\item   $p \in \partial \cyl_d$ and $\lvert F(p)\rvert=f(d)$;% and $\lvert p''\rvert = d$};
\item $p \in \partial\cone_{\rho_d}\cap \partial \cyl_d$.
\end{enumerate}

%
%
%Since $0$ is contained in the interior of $\Omegacoord{\xbar_0}$, if $d$ is sufficiently small we see that $\partial \cyl_d$ consists of the following three types of points:
%\begin{enumerate}
%\item $p\in \base_{d, \rho_d}$
%\item {\red $\lvert F(p)\rvert=f(d)$ and $\lvert p''\rvert = d$}
%\item $p$ is not contained in the interior of $\cone_{\rho_d}$.
%\end{enumerate}
%
\ \\
\noindent {\em Case 1:} First suppose that $p\in \base_{d, \rho_d} \cap  \partial\cyl_d$, we then calculate for sufficiently small $d$, 
\begin{align*}
\mountaintilde_{\pbar}(p)&=[\mountaintilde_{\pbar}(p)-\mountaintilde_{\pbar'}(p)]+[\mountaintilde_{\pbar'}(p)-\mountaintilde_{\pbar'}(\psi^{-1}(p''))]+\mountaintilde_{\pbar'}(\psi^{-1}(p''))\\
&=I+II+\mountaintilde_{\pbar'}(\psi^{-1}(p'')).
\end{align*}
%{\green \begin{align*}
%\mountaintilde_{\pbar}(p)&=[\mountaintilde_{\pbar}(p)-\mountaintilde_{\pbar'}(p)]+[\mountaintilde_{\pbar'}(p)-\mountaintilde_{\pbar'}(\psi^{-1}(p''))]+[\mountaintilde_{\pbar'}(\psi^{-1}(p''))]\\
%&=I+II+III.
%\end{align*}
%}
Then, using%~\eqref{eqn: f/d bound a3w} and
~\eqref{eqn: p bound}, we find,
\begin{align*}
I&=-\ctil(p, \xbar_{\pbar})+\ctil(0, \xbar_{\pbar})-(-\ctil(p, \xbar_{\pbar'})+\ctil(0, \xbar_{\pbar'}))\\
&\leq M\lvert p\rvert \lvert\pbar-\pbar'\rvert\\
&\leq M (\lvert F(p)\rvert+ \lvert p''\rvert) \lvert \pbar''\rvert\\
&\leq   M(f(d)+ d)\left(\frac{\constFour f(d)}{d}\right) \quad \text{( since $\pbar \in \bar\base_d$, $|\pbar''| \le \frac{\constFour f(d)}{d}$ )}\\
&\leq   M\constFour f(d)
\end{align*}
for some $M>0$ depending only on bounds on derivatives of $c$.
Similarly, we see by using~\eqref{eqn: p-psi inverse bound} that
\begin{align*}
II&= -\ctil(p, \xbar_{\pbar'}) + \ctil(\psi^{-1}(p''), \xbar_{\pbar'})\\
&\leq M\lvert p-\psi^{-1}(p'')\rvert \lvert\pbar'\rvert  \quad \text{(since $\ctil (\cdot, \xbar_0) \equiv 0 $)} \\
&\leq  M\lvert F(p)\rvert  \lvert\pbar'\rvert \\
&\leq  M \constThree d f(d).
\end{align*} 
Finally, if $d$ is small enough, then for $\pbar \in \bar\base_d$,  $\pbar'$ is in the convex hull $ \conv\{r_0 \ebar_i \mid 1\le i \le k\}$.
\begin{align*}
%III&=%-\ctil(\psi^{-1}(p''), \xbar_{\pbar'}) +  \ctil (0, \xbar_{\pbar'})  = 
\mountaintilde_{\pbar'}(\psi^{-1}(p''))
&\leq \max_{1\le i \le k}\{\mountaintilde_{r_0\ebar_i}(\psi^{-1}(p'')\}\\
&= 0
\end{align*}
by Lemma~\ref{lem: DASM} and (\ref{eqn: in the curved valley}). %  $\psi^{-1}(p'')\in\curvedvalley$.
Hence all together for small constants $\constThree, \constFour$ (depending only on $M$)
%with
%\begin{equation}\label{eqn: constFour restriction}
%\constThree d+\constFour  (M+1)\leq\frac{1}{M},
%\end{equation}
%(depending only on $M$ and $r_0$),  
 we obtain 
\begin{align*}%\label{eqn: upper bound by f(d) A3w case}
\mountaintilde_{\pbar}(p)&\leq M[\constThree d +\constFour ]f(d)\\
&\leq f(d)\\
&\leq \util(p)
\end{align*} 
where the last line follows from  
the definition of $f(d)$, since $p \in \base_{d, \rho_d}$.  The claim (\ref{eqn: show this inequality a3w case}) is verified in this first case.

%Finally, for $d$ small enough,
%\begin{align*}
%III&=-\ctil(\psi^{-1}(p''), \xbar_{\pbar'}) +  \ctil (0, \xbar_{\pbar'}) = 
%\mountaintilde_{ d\ebar_i}(\psi^{-1}(p''))\\
%&\leq \max\{\mountaintilde_{r_0\ebar_i}(\psi^{-1}(p'')),\ 0\}\\
%&= 0
%\end{align*}
%by Lemma~\ref{lem: DASM} and since $\psi^{-1}(p'')\in\curvedvalley$ for $d$ small enough. Hence all together we obtain for $d$ sufficiently small, 
%\begin{align*}\label{eqn: upper bound by f(d) A3w case}
%\mountaintilde_{\pbar}(p)&\leq M[\constThree+\constFour  r_0(M+1)]f(d)\\
%&\leq f(d)\\
%&\leq \util(p)
%\end{align*} 
%as long as 
%\begin{equation}\label{eqn: constFour restriction}
%\constThree+\constFour  r_0(M+1)\leq\frac{1}{M}
%\end{equation}
%by the definition of $f(d)$, obtaining the claim in this first case.

\ \\
\noindent {\em Case 2:}  Next, suppose that $p\in \partial \cyl_d$, and $\lvert F(p)\rvert = f(d)$. % and {\red $\lvert p''\rvert =  d$.}
 This case is subtle.   
%First, notice that  by~\eqref{eqn: dual basis} 
%and our coordinates, we can put vectors $q\in \Omegacoord{\xbar_0}$ and $\qbar\in \Omegabarcoord{x_e}$ in one single Euclidean space, and 
%have   
%\begin{align*}%\label{eqn: m tilde q dot qbar}
% \langle D \mountaintilde_{\qbar} (0) , q\rangle = \euclidean{q}{\qbar}.
%\end{align*}
%In particular,  for each $q \in \Omegacoord{\xbar_0}$, given as $q^i e_i$, %for simplification of notation in the following calculations, 
%we define  the corresponding vector
%\begin{align*}
% A(q)=A(q^i e_i)  : = q^i \ebar_i \in \Omegabarcoord{x_e}
%\end{align*}
%Notice that by \eqref{eqn: subdifferential contains ball},
%\begin{align*}
% \qbar', A(q') \in \partial_{\ctil}\util (0) \quad \text{ for any $q' \in \Omegacoord{\xbar_0}$, $\qbar' \in \Omegabarcoord{x_e}$, with $|q'| , |\qbar'| \le r_0$.}
%\end{align*}
In the following, the vector $p-\psi^{-1}(p'') $ will play an important role. Recall that it is contained in the span of the first $k$ basis vectors, $e_i$, $1\le i\le k$. 
Thus by the relations~\eqref{eqn: dual basis} and using~\eqref{A2}, for some choice of either plus or minus, and some index $1\leq j_0\leq k$, we find that  
%we can choose a basis vector $\ebar_{j_0}$, $1\le j_0 \le k$,
%such that  $\euclidean{(p-\psi^{-1}(p''))}{\ebar_{j_0}} \ge \euclidean{(p-\psi^{-1}(p''))}{\ebar_{i}}$ for any  $1\le i \le n$.
%It is important to keep in mind that 
%$|a(p)|=1$ and 
%$t \ebar_{j_0} \in   \partial_{\ctil}\util(0)$ for each $0\le t \le r_0$, and moreover, 
\begin{align}\label{eqn: e j0 bound}
\pm \euclidean{(p-\psi^{-1}(p''))}{\ebar_{j_0}} \ge \frac{1}{M}\lvert p-\psi^{-1}(p'')\rvert,
\end{align}
where $M>0$ depends only on $c$ and $k$.
%Consider the vector
%\begin{align*}
% a(p) : = e_{i_0} \quad \text{ such that} A\Big(\frac{p-\psi^{-1}(p'')}{|p-\psi^{-1}(p'') |} \Big)
%\end{align*}
%It is important to keep in mind that 
%$|a(p)|=1$ and $t a(p) \in   \partial_{\ctil}\util(0)$ for each $0\le t \le r_0$.
%\begin{align*}
%A\Big( t \big(p-\psi^{-1}(p'') \big) \Big)\in \partial_{\ctil}\util(0) \quad \forall \, 0\le t \le \frac{r_0}{|p-\psi^{-1}(p'')|}.
%\end{align*}
% with a vector $\bar A(q) \in \Omegabarcoord{x_e}$ such that  
%\begin{align*}
%|A(q)| = 1 \quad \&  \quad \langle \mountaintilde_{A(q)}, q\rangle =  |q|
%\end{align*}
At the same time, we claim that 
\begin{align}\label{eqn: lower than plus minus}
\mountaintilde_{\pbar'}(p)\leq \mountaintilde_{\pm r_0\ebar_{j_0}}(p),\qquad \forall\; p\in\curvedvalley.
\end{align}
Indeed, note that $\pbar'\in\conv{\{0,\ r_0\ebar_i\mid 1\leq i\leq k\}}$ if $d$ is small enough. Thus by applying Loeper's maximum principle (Lemma~\ref{lem: DASM}) multiple times we see that 
\begin{align*}
 \mountaintilde_{\pbar'}(p)&\leq \max{\{\mountaintilde_0(p),\ \mountaintilde_{r_0\ebar_{j_0}}(p)\}}\\
 &=0\\
 &=\mountaintilde_{r_0\ebar_{j_0}}(p)
\end{align*}
if $p\in \curvedvalley$ (using that $\mountaintilde_0\equiv 0$). On the other hand, by property~\eqref{eqn: sublevelsets contained in halfspaces} we have
\begin{equation*}
\{p\in\Omegacoord{\xbar_0}\mid \mountaintilde_{- r_0\ebar_{j_0}}\leq 0\}\subset\{p\in\Omegacoord{\xbar_0}\mid -p^{j_0}\leq 0\}
\end{equation*}
hence by the continuity of $c$, 
\begin{equation*}
\curvedvalley\subset \{p\in \Omegacoord{\xbar_0}\mid p^{j_0}\leq 0\}\subset \{p\in \Omegacoord{\xbar_0}\mid 0\leq\mountaintilde_{ -r_0\ebar_{j_0}} \},
\end{equation*}
and we obtain the claimed inequality~\eqref{eqn: lower than plus minus}. It can be seen that we may assume the choice of vector is $+\ebar_{j_0}$ without any loss of further generality, so we shall do so for the remainder of this case.

Now our goal in this case is to show the inequality $\mountaintilde_{\pbar}(p) \le \mountaintilde_{r_0\ebar_{j_0}} (p)$. Indeed, $r_0\ebar_{j_0} \in \partial_{\ctil} \util(0)$ implies $\mountaintilde_{r_0\ebar_{j_0}} \le \util$, which then easily leads to the desired inequality~\eqref{eqn: show this inequality a3w case}. 
We assume in the following $d$ is sufficiently small. 
First write 
\begin{align*}
 \mountaintilde_{\pbar}(p)
 & = 
  \left[ \mountaintilde_{\pbar}(p)  -  \mountaintilde_{\pbar'}(p)\right] \\
  & \quad +   \left[\mountaintilde_{\pbar'}(p)  - \mountaintilde_{\pbar'}(\psi^{-1}(p''))\right] \\
  & \quad +  \left[-\mountaintilde_{r_0\ebar_{j_0}} (p) +\mountaintilde_{r_0 \ebar_{j_0}}(\psi^{-1}(p''))\right]\\
   %& + \mountaintilde_{\pbar}(\psi^{-1}(p'')) -\mountaintilde_{\qbar_0}(\psi^{-1}(p''))+  \mountaintilde_{\qbar_0}(\psi^{-1}(p'')) +  \mountaintilde_{\qbar_0}(p) \\
 & \quad + \left[\mountaintilde_{\pbar'}(\psi^{-1}(p''))  -\mountaintilde_{r_0 \ebar_{j_0}}(\psi^{-1}(p''))\right]\\
 & \quad +  \mountaintilde_{r_0 \ebar_{j_0}}(p)\\
 & = I + II  + III + IV+ \mountaintilde_{r_0 \ebar_{j_0}}(p) 
\end{align*}
Here, similarly to Case 1, we see
\begin{align*}
 I \le MC_1 f(d).
\end{align*}
%For the term $IV$, we first write 
%\begin{align*}
% IV & =[\mountaintilde_{\pbar'}(p') -\mountaintilde_{r_0\ebar_{j_0}}(p'))]+  [\mountaintilde_{\pbar'}(p) -\mountaintilde_{ \pbar'}(p'))]  - [\mountaintilde_{r_0\ebar_{j_0}}(p)-\mountaintilde_{r_0\ebar_{j_0}}(p')]  \\
%%&  \quad+   [\mountaintilde_{ \pbar'}(\psi^{-1}(p''))  - \mountaintilde_{r_0\ebar_{j_0}}(\psi^{-1}(p''))] \\
%& = IV_1 + IV_2 + IV_3. 
%\end{align*}

To deal with the term $II$, we first make an auxiliary calculation, which will be of use in a number of places later. Fix any $p_1$, $p_2\in \Omegacoord{\xbar_0}$ and $\pbar\in \Omegabarcoord{x_e}$.  Then using a Taylor expansion argument along with~\eqref{A2},~\eqref{eqn: modified differential}, and~\eqref{eqn: dual basis} results in the bound 
\begin{align}
 &-\ctil(p_2, \xbar_{\pbar})  +\ctil(p_1, \xbar_{\pbar})\notag\\
 &\qquad\leq \langle-D\ctil(p_1, \xbar_{\pbar})),p_2-p_1\rangle+M\lvert \pbar\rvert\lvert p_2-p_1\rvert^2\notag\\
&\qquad\leq \langle-D\ctil(0, \xbar_{\pbar})),p_2-p_1\rangle+M\lvert \pbar\rvert\lvert p_1\rvert\lvert p_2-p_1\rvert+M\lvert \pbar\rvert\lvert p_2-p_1\rvert^2\notag\\
&\qquad=\euclidean{\pbar}{(p_2-p_1)}+M\lvert \pbar\rvert\lvert p_1\rvert\lvert p_2-p_1\rvert+M\lvert \pbar\rvert\lvert p_2-p_1\rvert^2.\label{eqn: 2nd order taylor expansion}
\end{align}
%Here, the notation $D$ means the derivative with respect to points in  $\Omegacoord{\xbar_0}$.
By applying~\eqref{eqn: 2nd order taylor expansion} to the term $II$ and using the estimates~\eqref{eqn: p bound} and~\eqref{eqn: psi inverse bound}, we obtain% (noting $\mountaintilde_{\cdot} (0)=0$ and $\mountaintilde_{0}\equiv 0$),
\begin{align*}
 II&=-\ctil(p, \xbar_{\pbar'})+\ctil(0, \xbar_{\pbar'})-(-\ctil(\psi^{-1}(p''), \xbar_{\pbar'})+\ctil(0, \xbar_{\pbar'}))\\
 %& \le \langle D \mountaintilde_{\pbar'}(p), p -\psi^{-1}(p'') \rangle  + M|p-\psi^{-1}(p'')|^2  \\
 %& \le \langle D \mountaintilde_{\pbar'}(0), p -\psi^{-1}(p'') \rangle  + M |p| |p-\psi^{-1}(p'')|+ M|p-\psi^{-1}(p'')|^2  \\
 & \leq \pbar' \cdot (p - \psi^{-1}(p'')) +  M \lvert\psi^{-1}(p)\rvert\lvert p-\psi^{-1}(p'')\rvert+ M\lvert p-\psi^{-1}(p'')\rvert^2  \\
 & \leq \lvert\pbar' \rvert \lvert F(p)\rvert +  M \lvert p\rvert\lvert F(p)\rvert+ M\lvert F(p)\rvert^2  \\
& \le C_0 M d \, f(d) + M (f(d)+ d) f(d) + M f(d)^2 \\
& \qquad \text{(using $p\in \cyl_d$, $\pbar \in \bar \cyl_d$)}% and the condition in this case: $|F(p)| = f(d)$)} 
 \end{align*}
 %& \le M |\pbar'| |p'| - \frac{r_0}{k}|p'| + M|p'|^2 \quad \text{(by \eqref{eqn: j0 derivative} and \eqref{eqn: mountaintilde derivative})} \\
%& \le  \langle D_{p'} \mountaintilde_{\pbar'}(0) -D_{p'}\mountaintilde_{r_0\ebar_{j_0}}(0), p'\rangle + M \\
To deal with the term $III$, first we note that since $\mountaintilde_{0} \equiv 0$, by~\eqref{A2} we will have%and by choosing $d$ small enough,   %$\cyl_d \subset B_\epsilon(0)$, and  
 %by~\eqref{A2}, we see, 
 \begin{align*}
\lvert D\mountaintilde_{\pbar}\rvert&\leq M \lvert\pbar\rvert.   
\end{align*}
Hence by~\eqref{eqn: 2nd order taylor expansion},~\eqref{eqn: p bound}, and~\eqref{eqn: psi inverse bound} again,
\begin{align*}
 III &=-\ctil(\psi^{-1}(p''), \xbar_{r_0\ebar_{j_0}})+\ctil(p, \xbar_{r_0\ebar_{j_0}})\\
 &\le -r_0\euclidean{\ebar_{j_0}}{(p -\psi^{-1}(p''))}  + M \lvert \psi^{-1}(p'')\rvert\lvert p -\psi^{-1}(p'')\rvert + M \lvert p -\psi^{-1}(p'')\rvert^2  \\
  & \le  -M^{-1}\lvert p-\psi^{-1}(p'')\rvert  + M \lvert p\rvert\lvert p -\psi^{-1}(p'')\rvert + M \lvert p -\psi^{-1}(p'')\rvert^2  \\
  & \qquad \text{(the first term by  \eqref{eqn: e j0 bound})}\\
 & \le  - M^{-1} f(d)  + M (f(d) + d) f(d) + M f(d)^2 \\
  & \qquad \text{(using $p\in \cyl_d$, $\pbar \in \bar \cyl_d$ and the condition $\lvert F(p)\rvert = f(d)$).} 
%& \le -\langle D_p \mountaintilde_{r_0\ebar_{j_0}} (0), p''\rangle + M|p'||p''| +  Mr_0 |p''|^2\\
 %& = M|p'||p''| +  Mr_0 |p''|^2 \text{\blue (NOT good to have $|p''|^2$)} \\
% & \qquad \text{(since $\langle D_p \mountaintilde_{r_0\ebar_{j_0}} (0), p''\rangle=0$ by the definition of $j_0$ and $p''$)}.  
\end{align*}
The key here is that the first term $- M^{-1} f(d)$ in the last line above decays slower than all other terms in $I$, $II$ and $III$, if $\constFour$ is chosen to be small. 
In particular, for sufficiently small fixed $\constFour$ and for small $d$, we see
\begin{align*}
  I+ II + III \le 0.
\end{align*} 
Finally, since $\psi^{-1}(p'')\in \curvedvalley$ we have $IV \le 0$ by~\eqref{eqn: lower than plus minus}.
%, because by applying Lemma~\ref{lem: DASM} and since $\psi^{-1}(p'')\in \curvedvalley$, we have
% \begin{align*}
%\mountaintilde_{ \pbar'}(\psi^{-1}(p''))&\leq \max\{\mountaintilde_{\pm r_0\ebar_j}(\psi^{-1}(p''))\}\\
%&=0\\
%&=\mountaintilde_{r_0\ebar_{j_0}}(\psi^{-1}(p'')).
%\end{align*}
All together, this shows that $\mountaintilde_{\pbar} (p)\le \mountaintilde_{r_0\ebar_{j_0}} (p)$ as desired, and we obtain~\eqref{eqn: show this inequality a3w case} in this case.

\ \\
\noindent {\em Case 3:}

 Finally,  suppose that $p \in \partial \cone_{\rho_d} \cap \partial \cyl_d$. This case is subtle as well. First, notice that for small universal constants $\constOne$ and $\constThree$, and for small $d>0$, for $p \in  \cone_{\rho_d}$ and $\pbar \in \bar\base_d$, the angle between  $p$ and $\pbar''=(0,\pbar'')$ is greater than $\pi/2 + \gamma d$ with
\begin{align*}
\gamma:= \frac{(1-\beta)}{4R_0}.
\end{align*}
% Since the angles $\rho_d$ and $\constOne (\rho_d-\pi/2)$ are complementary for small $\constOne$, by the definitions of $\cone_{\rho_d}$ and $\conebar_{\constOne (\rho_d-\pi/2)}$ we have %$\conebar_{\constOne (\rho_d -\pi/2)}$, 
%\begin{align*}
%-\ctil(p, \pbar'') =     \euclidean{ x}{ \xbar''} = \euclidean{ x''}{ \xbar''} \le 0 .
%\end{align*}
Note that by applying similar calculations as~\eqref{eqn: 2nd order taylor expansion} but in the variables $\pbar$, we obtain
\begin{align*}
 &[-\ctil(p, \xbar_{\pbar_2})+\ctil(0, \xbar_{\pbar_2})]-[-\ctil(p, \xbar_{\pbar_1})+\ctil(0, \xbar_{\pbar_1})]\\
 &\qquad\leq \euclidean{p}{(\pbar_2-\pbar_1)}+M\lvert p\rvert\lvert \pbar_1\rvert\lvert \pbar_2-\pbar_1\rvert+M\lvert p\rvert\lvert \pbar_2-\pbar_1\rvert^2
\end{align*}
%\begin{align*}
% -\ctil(p, \xbar_{\pbar_2})  +\ctil(p, \xbar_{\pbar_1})&\leq\euclidean{p}{(\pbar_2-\pbar_1)}+M\lvert p\rvert\lvert \pbar_1\rvert\lvert \pbar_2-\pbar_1\rvert+M\lvert p\rvert\lvert \pbar_2-\pbar_1\rvert^2.
%\end{align*}
%This lower bound will be important in the following Taylor expansion (in $\pbar$):  
%by applying~\eqref{eqn: 2nd order taylor expansion} to each pair of bracketed terms below, we obtain
Combining this with the lower bound on the angle above, we obtain
\begin{align}
&\mountaintilde_{\pbar}(p)-\mountaintilde_{\pbar'}(p)\notag\\
&=[-\ctil(p, \xbar_{\pbar})+\ctil(0, \xbar_{\pbar})]-[-\ctil(p, \xbar_{\pbar'})+\ctil(0, \xbar_{\pbar'})]\notag\\
%& = -\ctil(p, \xbar_{\pbar}) + \ctil(p, \xbar_{\pbar'}) +\ctil(0, \xbar_{\pbar})-\ctil(0, \xbar_{\pbar'})\notag\\
%&\leq\langle -\Dbar \ctil(p, \xbar_{\pbar'})+\Dbar \ctil(0, \xbar_{\pbar'}), [-\Dbar Dc(x_e, \xbar_{\pbar'})]^{-1}\pbar''\rangle +M\lvert p\rvert\lvert \pbar''\rvert^2\notag\\
%&\leq \langle -\Dbar \ctil(p, \xbar_{0})+\Dbar \ctil(0, \xbar_{0}), [-\Dbar Dc(x_e, \xbar_{0})]^{-1}\pbar''\rangle +M\lvert p\rvert\lvert\pbar'\rvert\lvert\pbar''\rvert+M\lvert p\rvert\lvert \pbar''\rvert^2\notag\\
&\leq p\cdot\pbar''+M\lvert p\rvert\lvert\pbar'\rvert\lvert\pbar''\rvert+M\lvert p\rvert\lvert \pbar''\rvert^2\notag\\
&\leq - \lvert p\rvert\lvert\pbar''\rvert \sin{(\gamma d)}+M\lvert p\rvert\lvert\pbar''\rvert\lvert\pbar\rvert \notag\\
&\quad \text{(from the lower bound on the angle between $p$ and $\pbar''$)}\notag\\
& \leq - M(1-\constOne)\lvert p\rvert\lvert\pbar''\rvert d+M\lvert p\rvert\lvert\pbar''\rvert\lvert\pbar\rvert\notag\\
&=\lvert p\rvert\lvert\pbar''\rvert\left(- M(1-\constOne) d+M\lvert\pbar\rvert \right)\notag\\
& \le  \left(\constOne+\constThree + \constFour-1\right)M\lvert p\rvert\lvert\pbar''\rvert d\notag\\
& \qquad \text{(since $\pbar \in \bar\base_d$ and $\lvert \pbar \rvert \le \constThree d + \constFour f(d)/d \le (\constThree + \constFour) d$ )}\notag\\
&\leq 0\label{eqn: outside of cone a3w case}
 %\label{eqn: wedge direction a3w}
\end{align}
for sufficiently small  constants $\constOne, \constThree, \constFour$ (independent of $d$), and for sufficiently small $d$. 
%provided that 
%\marginpar{this is where we \\need the two cases-J}
%\begin{align}
%- \frac{\gamma}{2} d+2M\lvert\pbar\rvert+o(d)\leq 0\label{eqn: constThree and constFour restriction}.
%\end{align}
As a result,
\begin{align*}
\mountaintilde_{\pbar}(p) \le \mountaintilde_{\pbar'}(p)\leq \util(p)%\label{eqn: outside of cone a3w case}.
\end{align*}
where we have used the fact that $\pbar' \in \partial_{\ctil} \util (0)$ for $\lvert\pbar '\rvert \le r_0$. This shows \eqref{eqn: show this inequality a3w case} for the final case.  %for sufficiently small $|\xbar'| \le d$.   

With this we have shown the inequality~\eqref{eqn: show this inequality a3w case} for all $p\in \partial \cyl_d$ and $\pbar\in \bar \base_d$, since it clearly holds when $\pbar=0$ the inequality holds for all $\pbar \in  \cylbar_d$ by Lemma~\ref{lem: DASM}.

\subsubsection*{Step 4}
We now claim that $\cylbar_d\subset\partial_cu(\cyl_d)$ for $d$ sufficiently small. If we can first show that $\util-\mountaintilde_{\pbar}$ has a local minimum in the set $\cyl_d$ for each $\pbar\in \cylbar_d$, we may then apply Corollary~\ref{cor: local to global} to obtain the desired inclusion. Indeed, if there exists a point in the interior of $\cyl_d$ where $\util<\mountaintilde_{\pbar}$ we immediately obtain a local minimum from~\eqref{eqn: show this inequality a3w case}. If this does not hold, we see by the calculations leading to~\eqref{eqn: outside of cone a3w case} that $\mountaintilde_{\pbar}\leq \util$ on a small neighborhood of $0$. Since $\util(0)=0=\mountaintilde_{\pbar}(0)$ by construction, we see that $0$ is a local minimum of $\util-\mountaintilde_{\pbar}$.

%{\red 
%
%We now claim that $\cylbar_d\subset\partial u(\cyl_d)$, which follows from Corollary~\ref{cor: local to global}  by showing that  the function $p \mapsto \util(p)- \mountaintilde_{\pbar} (p)$ 
%has a local minimum inside $\cyl_d$ for any $\pbar\in \cylbar_d$. 
% Notice that for each $\pbar \in \bar\base_d$, 
% \begin{align*}
% \euclidean{\nabla_p \mountaintilde_{\pbar} (0) }{}
% \end{align*}
% by construction, $\cyl_d$ has a nonempty intersection with both of $\{ p  \mid \mountaintilde_{\pbar} (p)<0\}$ and $\{p \mid \mountaintilde_{\pbar} (p)>0 \}$ for each  due to the fact that $\rho_d > \pi/2$.
%Since $\util(0) =0$ and $\util \ge 0$, from this we see that  there exists a point $p$ in the interior of $\cyl_d$ where $\util(p)<\mountaintilde_{\pbar} (p)$, thus  we obtain a local minimum from~\eqref{eqn: show this inequality euclidean case} verifying the claim by the definition of the subdifferential $\partial u$. 
%}
%

\subsubsection*{Step 5}

Finally, by using~\eqref{eqn: upper bound} in conjunction with the volume bounds~\eqref{eqn: cyld volume} and~\eqref{eqn: general cylbar vol}, we see (for all $d>0$ sufficiently small) %by estimating the volumes of $\cyl_d$ and $\cylbar_d$, 
\begin{align*}
 d^{n-k-1}d^{k} \left(\frac{f(d)}{d}\right)^{n-k} &\sim \Leb{\cylbar_d}\\
&\lesssim\Leb{\partial_c u(\cyl_d)}\\
&\leq \Leb{\cyl_d}\\
&\lesssim f(d)^{k}d^{n-k}.
\end{align*}

However, since $n-2k\leq 0$ by assumption and $f(d)/d \le d$ by definition, we see that $(f(d)/d)^{n-2k}$ is bounded below away from zero, so after rearranging we obtain
\begin{equation*}
1\lesssim\left(\frac{f(d)}{d}\right)^{n-2k}\lesssim d.
\end{equation*}
 which is a contradiction as $d\to 0$. This completes the proof of Theorem~\ref{thm: A3w subdifferential bound}.
 \end{proof}

\appendix
\appendixpage
\section{An example of a purely \eqref{A3w}  cost}\label{section: a3w example}
In this section, we give an example of a cost function which satisfies~\eqref{A3w}, but not~\eqref{A3s} and~\eqref{NNCC}. This example originally appears in~\cite{TW09} where it is stated to satisfy~\eqref{A3w} but not~\eqref{A3s}. It was communicated to the authors by Neil Trudinger \cite{Trudinger} that the cost function actually fails to satisfy~\eqref{NNCC} as well. The proof given in this appendix is only for the reader's convenience, since the detailed proof does not seem to appear in the literature.  
%{\blue [YH: We should ask Neil, whose original example this is.]}

\begin{lem}\label{lem: |x-y|^-2}
Define
\begin{align*}
 c(x, \xbar):=\lvert x-\xbar\rvert^{-2}
\end{align*}
for $x$, $\xbar\in\R^n$. 
Then, $c$ satisfies conditions~\eqref{A0}-\eqref{A2} and~\eqref{A3w}, but does not satisfy~\eqref{A3s} and~\eqref{NNCC} on any subdomain of $\R^n\times\R^n$ whose closure does not contain the diagonal $\{(x,\xbar)\mid x=\xbar\}$.
\end{lem}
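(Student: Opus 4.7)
\textbf{Proof plan for Lemma~\ref{lem: |x-y|^-2}.}

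The plan is to break the verification into the easy structural conditions (A0), (A1), (A2), which reduce to explicit one-line computations, and then to carry out the main calculation of the MTW tensor, from which both the (A3w) claim and the failures of (A3s) and (NNCC) will be read off simultaneously. Throughout, I will write $y=x-\bar x$ and $r=|y|$, and use the assumption that the closure of the subdomain avoids the diagonal to ensure $r$ stays in a compact subinterval of $(0,\infty)$.

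First, (A0) is immediate because $c(x,\bar x)=|x-\bar x|^{-2}$ is real-analytic on the open set $\{x\neq \bar x\}$, hence $C^4$ on any subdomain with closure disjoint from the diagonal. For (A1), I compute
\[
-D_x c(x_0,\bar x)=\frac{2(x_0-\bar x)}{|x_0-\bar x|^4},
\]
and observe that if $p=-D_x c(x_0,\bar x)$ then $|p|=2/|y|^{3}$, so $|y|=(2/|p|)^{1/3}$ and $y$ is explicitly recovered as $y= 2^{-2/3}|p|^{-4/3}p$; this gives an explicit inverse on the image, so the map $\bar x\mapsto -D_xc(x_0,\bar x)$ is injective, and the other injectivity follows by the symmetry $c(x,\bar x)=c(\bar x,x)$. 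For (A2), a direct calculation yields
\[
-D_{\bar x}D_x c(x,\bar x)=\frac{2}{r^{4}}\Bigl(-I+\frac{4}{r^{2}}\, y\, y^{T}\Bigr),
\]
whose eigenvalues are $6/r^4$ in the direction of $y$ and $-2/r^4$ on $y^{\perp}$; in particular the matrix is invertible.

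The main step is (A3w). I will exploit the joint translation invariance and the $O(n)$ rotation invariance (both acting diagonally on $(x,\bar x)$) to reduce to the base point $x=0$, $\bar x=-R\,e_{n}$ with $R>0$. At this point I will compute the coefficients entering the MTW tensor
\[
\mathrm{MTW}^{kl}_{ij}:=-(c_{ij,\bar p\bar q}-c_{ij,\bar r}\,c^{\bar r,s}\,c_{s,\bar p\bar q})\,c^{\bar p,k}c^{\bar q,l}
\]
by differentiating $c$ up to fourth order. The residual $O(n-1)$ symmetry fixing $e_n$ forces $\mathrm{MTW}(V,V,\eta,\eta)$ to be a polynomial in the rotational invariants $|V|^2$, $|\eta|^2$, $V\cdot\eta$, $V^{n}$, and $\eta^{n}$. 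I expect the calculation to produce an expression of the schematic form
\[
\mathrm{MTW}(V,V,\eta,\eta)=A_1\,(V')^{2}|\eta'|^{2}+A_2\,\bigl((V'\!\cdot\eta')^{2}-|V'|^{2}|\eta'|^{2}\bigr)+A_3\,(V\!\cdot\eta)^{2}+\cdots,
\]
where $V',\eta'$ denote the components orthogonal to $e_{n}$ and the terms involving $V^n$ or $\eta^n$ are nonnegative, with the positive definite part handling the directions with $\eta\cdot y\neq 0$ or $V\cdot y\neq 0$. The inequality $\mathrm{MTW}(V,V,\eta,\eta)\ge 0$ under the orthogonality constraint $V\cdot\eta=0$ then follows from a Cauchy--Schwarz comparison among the invariants. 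I expect this calculation to be the main obstacle: the bookkeeping of fourth-order derivatives of $r^{-2}$ is heavy, and isolating the precise combination giving nonnegativity requires care, although the rotational reduction makes it manageable.

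Finally, the failures of (A3s) and (NNCC) are read off from the same formula. For (A3s), I will choose $V,\eta$ both lying in the hyperplane $e_{n}^{\perp}$ and mutually orthogonal; then all coefficients involving $V^{n}$, $\eta^{n}$, or $V\cdot\eta$ vanish and a judicious choice (e.g. two orthogonal vectors in the $(n-1)$-dimensional cross-sectional space) collapses the remaining invariants to produce $\mathrm{MTW}(V,V,\eta,\eta)=0$, contradicting the strict lower bound $\delta_{0}|V|^{2}|\eta|^{2}>0$ required by (A3s). For (NNCC), I will drop the orthogonality constraint and test with $V=\eta$ aligned in the direction $e_n$, so that $V\cdot\eta=|V|^2\neq 0$; the sign-indefinite term multiplied by $(V\cdot\eta)^{2}$ in the MTW formula, which (A3w) was tailored to ignore via the orthogonality hypothesis, will dominate and yield a strictly negative value, violating (NNCC). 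Both failures occur uniformly over the subdomain because the relevant coefficients are bounded away from $0$ once $r$ is bounded above and below.
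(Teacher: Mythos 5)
Your plan correctly disposes of (A0)--(A2) (though note a small slip: from $p=2y/|y|^{4}$ and $|y|=(2/|p|)^{1/3}$ one gets $y=2^{1/3}|p|^{-4/3}p$, not $2^{-2/3}|p|^{-4/3}p$). For (A3w) itself you have only a plan, not a proof: you announce the schematic form of the tensor and promise a ``Cauchy--Schwarz comparison among the invariants,'' but that comparison is precisely the nontrivial content. The paper carries it out by first writing $A_{ij}(x,p)=-D^{2}_{ij}c(x,\cExp{x}{p})$ and differentiating this twice in $p$, which is markedly lighter bookkeeping than computing $\MTWcoord{i}{j}{k}{l}$ directly (no fourth $\bar x$-derivatives of $c$ and no inversion of $D\Dbar c$ under the hood), and after normalizing $|V|=|\eta|=1$, $\langle V,\eta\rangle=0$, and $\theta:=\langle q,V\rangle$, arrives at the lower bound $\tfrac{2^{5/3}}{9}|p|^{-2/3}(1-8\theta^{2}+16\theta^{4})\ge 0$.

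The genuine error is in your (A3s) plan. You propose to take $V,\eta$ orthogonal to the connecting direction $e_{n}$ (hence $\theta=\langle q,V\rangle=0$ and $\langle q,\eta\rangle=0$) and mutually orthogonal, and claim this makes the MTW expression collapse to zero. It does not: plugging $\theta=0$, $\langle q,\eta\rangle=0$, $\langle V,\eta\rangle=0$ into the expression gives the strictly positive constant $\tfrac{2^{5/3}}{3}|p|^{-2/3}|V|^{2}|\eta|^{2}$. In fact the equality case in the paper's bound $\tfrac{2^{5/3}}{9}|p|^{-2/3}(1-4\theta^{2})^{2}$ occurs at $\theta=1/2$, which forces $q$ to lie \emph{inside} the plane $\operatorname{span}\{V,\eta\}$ at a $60^{\circ}$ angle to $V$ (the paper's explicit choice is $V=e_{1}$, $\eta=e_{2}$, $p=\tfrac12 e_{1}+\tfrac{\sqrt3}{2}e_{2}$); this is the opposite of the ``cross-sectional'' configuration you propose. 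Your (NNCC) counterexample ($V=\eta$ parallel to $q$) is the right configuration and matches the paper, but again needs the explicit evaluation (it yields $-\tfrac{2^{5/3}}{3}|p|^{-2/3}\lambda^{2}<0$) rather than the qualitative claim that the $(V\cdot\eta)^{2}$ term ``will dominate,'' since the $+8\langle V,\eta\rangle\langle q,V\rangle\langle q,\eta\rangle$ term has the opposite sign and is of the same size.
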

\begin{proof}
It is clear that $c$ satisfies~\eqref{A0} on any such a subdomain of $\R^n\times \R^n$.

Assume throughout that $x\neq\xbar$. We can calculate, 
 
\begin{align*}
 -D c(x, \xbar)=2(x-\xbar)\vert x-\xbar\rvert^{-4},
\end{align*}
it is clear that for a fixed $x$, this is an injective mapping in $\xbar$, and by the symmetry of the two variables, we see that~\eqref{A1} is satisfied. Additionally,
\begin{align}
 D_i\Dbar_jc(x, \xbar)&=-D^2_{ij}c(x, \xbar)\notag\\
 &=2\lvert x-\xbar\rvert^{-4}[\delta_{ij}-4(x_i-\xbar_i)(x_j-\xbar_j)\lvert x-\xbar\rvert^{-2}]\label{eqn: c_ij}
\end{align}
hence
\begin{align*}
 \det{D\Dbar c(x, \xbar)}&=2^n\lvert x-\xbar\rvert^{-4n}\det{\left(\Id-\frac{4(x-\xbar)\otimes(x-\xbar)}{\lvert x-\xbar\rvert^{2}}\right)}\\
 %&=2^n\lvert x-\xbar\rvert^{-4n}\left(1-\frac{4\lvert x-\xbar\rvert^2}{\lvert x-\xbar\rvert^{-2}}\right)\\
 &=-3\cdot 2^n\lvert x-\xbar\rvert^{-4n}\neq 0,
\end{align*}
hence we obtain~\eqref{A2}.

We will now verify condition~\eqref{A3w} by a lengthy, but routine calclation. Note that an alternate way of writing condition~\eqref{A3w} is by the inequality
\begin{align*}
 D^2_{p_kp_l}A_{ij}(x, p)V^iV^j\eta_k\eta_l\geq 0
\end{align*}
for $x\in\R^n$, $p\neq 0$, and $\langle V, \eta\rangle=0$, where
\begin{align*}
 A_{ij}(x, p):=-D^2_{ij}c(x, \cExp{x}{p}).
\end{align*}
Now for $p\in\R^n$, since by definition $-Dc(x, \cExp{x}{p})=p$, by taking the absolute value of both sides of this expression we find
\begin{align*}
2^{-\frac{1}{3}} \lvert p\rvert^\frac{1}{3}=\lvert x-\cExp{x}{p}\rvert ^{-1},
\end{align*}
which leads to the formula
\begin{align*}
x-\cExp{x}{p}=2^{\frac{1}{3}}\lvert p\rvert^{-\frac{4}{3}}p.
\end{align*}
Plugging this expression into~\eqref{eqn: c_ij}, we find
\begin{align*}
 A_{ij}(x, p)=2^{-\frac{1}{3}} \lvert p\rvert^\frac{4}{3}(\delta_{ij}-4p_ip_j\lvert p\rvert^{-2}).
\end{align*}
Differentiating once in $p$,
\begin{align*}
 &D_{p_k}A_{ij}(x, p)\\
 &=2^{-\frac{1}{3}} [\frac{4}{3}p_k\lvert p\rvert^{-\frac{2}{3}}(\delta_{ij}-4p_ip_j\lvert p\rvert^{-2})-4\lvert p\rvert^\frac{4}{3}(\delta_{ik}p_j\lvert p\rvert^{-2}+p_i\delta_{jk}\lvert p\rvert^{-2}-2p_ip_jp_k\lvert p\rvert^{-4})]\\
 &=2^{\frac{5}{3}}\lvert p\rvert^{-\frac{2}{3}}\left[\frac{1}{3}\delta_{ij}p_k-\delta_{ik}p_j-p_i\delta_{jk}+\frac{2}{3}p_ip_jp_k\lvert p\rvert^{-2}\right],
\end{align*}
and differentiating in $p$ again,
\begin{align*}
&D^2_{p_kp_l}A_{ij}(x, p)\\
&=2^{\frac{5}{3}}\left(-\frac{2}{3}\right)p_l\lvert p\rvert^{-\frac{8}{3}}\left[\frac{1}{3}\delta_{ij}p_k-\delta_{ik}p_j-p_i\delta_{jk}+\frac{2}{3}p_ip_jp_k\lvert p\rvert^{-2}\right]\\
&\qquad+2^{\frac{5}{3}}\lvert p\rvert^{-\frac{2}{3}}\left[\frac{1}{3}\delta_{ij}\delta_{kl}-\delta_{ik}\delta_{jl}-\delta_{il}\delta_{jk}\right.\\
&\qquad\left.+\frac{2}{3}\delta_{il}p_jp_k\lvert p\rvert^{-2}+\frac{2}{3}p_i\delta_{jl}p_k\lvert p\rvert^{-2}+\frac{2}{3}p_ip_j\delta_{kl}\lvert p\rvert^{-2}-\frac{4}{3}p_ip_jp_kp_l\lvert p\rvert^{-4}\right]\\
&=\frac{2^{\frac{5}{3}}}{3}\lvert p\rvert^{-\frac{2}{3}}\left[-\frac{2}{3}\delta_{ij}p_kp_l\lvert p\rvert^{-2}+2\delta_{ik}p_jp_l\lvert p\rvert^{-2}+2p_i\delta_{jk}p_l\lvert p\rvert^{-2}\right.\\
&\qquad\left.-\frac{4}{3}p_ip_jp_kp_l\lvert p\rvert^{-4}+\delta_{ij}\delta_{kl}-3\delta_{ik}\delta_{jl}-3\delta_{il}\delta_{jk}\right.\\
&\qquad\left.+2\delta_{il}p_jp_k\lvert p\rvert^{-2}+2p_i\delta_{jl}p_k\lvert p\rvert^{-2}+2p_ip_j\delta_{kl}\lvert p\rvert^{-2}-4p_ip_jp_kp_l\lvert p\rvert^{-4}\right]
\end{align*}
Hence, for any $V$, $\eta\in\R^n$ we find
\begin{align*}
 & D^2_{p_kp_l}A_{ij}(x, p)V^iV^j\eta_k\eta_l\\
 &=\frac{2^{\frac{5}{3}}}{3}\lvert p\rvert^{-\frac{2}{3}}\left[-\frac{2}{3}\lvert V\rvert^2\langle \frac{p}{\lvert p\rvert}, \eta\rangle^2+2\langle V, \eta\rangle \langle \frac{p}{\lvert p\rvert}, V\rangle\langle \frac{p}{\lvert p\rvert}, \eta\rangle+2\langle V, \eta\rangle \langle \frac{p}{\lvert p\rvert}, V\rangle\langle \frac{p}{\lvert p\rvert}, \eta\rangle\right.\\
 &\qquad\left.-\frac{4}{3}\langle \frac{p}{\lvert p\rvert}, V\rangle^2\langle \frac{p}{\lvert p\rvert}, \eta\rangle^2+\lvert V\rvert^2\lvert\eta\rvert^2-3\langle V, \eta\rangle^2-3\langle V, \eta\rangle^2\right.\\
&\qquad\left.+2\langle V, \eta\rangle\langle \frac{p}{\lvert p\rvert}, \eta\rangle\langle \frac{p}{\lvert p\rvert}, V\rangle+2\langle V, \eta\rangle\langle \frac{p}{\lvert p\rvert}, V\rangle\langle \frac{p}{\lvert p\rvert}, \eta\rangle+2\lvert \eta\rvert^2\langle \frac{p}{\lvert p\rvert}, V\rangle^2-4\langle \frac{p}{\lvert p\rvert}, V\rangle^2\langle \frac{p}{\lvert p\rvert}, \eta\rangle^2\right]\\
 &=\frac{2^{\frac{5}{3}}}{3}\lvert p\rvert^{-\frac{2}{3}}\left[\lvert V\rvert^2\lvert\eta\rvert^2+2\lvert \eta\rvert^2\langle \frac{p}{\lvert p\rvert}, V\rangle^2-\frac{2}{3}\lvert V\rvert^2\langle \frac{p}{\lvert p\rvert}, \eta\rangle^2-\frac{16}{3}\langle \frac{p}{\lvert p\rvert}, V\rangle^2\langle \frac{p}{\lvert p\rvert}, \eta\rangle^2\right.\\
 &\qquad\left.-6\langle V, \eta\rangle^2+8\langle V, \eta\rangle \langle \frac{p}{\lvert p\rvert}, V\rangle\langle \frac{p}{\lvert p\rvert}, \eta\rangle\right].
\end{align*}

We are now ready to verify~\eqref{A3w}. Let us write
\begin{align*}
 q:&=\frac{p}{\lvert p\rvert},\\
 \theta:&=\langle q, V\rangle,
\end{align*}
and assume that $\lvert V\rvert=\lvert \eta\rvert =1$, with $\langle V, \eta\rangle =0$. By solving the  constrained optimization problem of maximizing the function $q\mapsto \langle q, \eta\rangle^2+\langle q, V\rangle ^2$ subject to $\lvert q\rvert^2=1$ in $\R^n$, we see that 
\begin{align*}
 \lvert \langle q, \eta\rangle \rvert\leq \sqrt{1-\theta^2}
\end{align*}
and thus for $\langle V, \eta\rangle =0$,
\begin{align*}
 & D^2_{p_kp_l}A_{ij}(x, p)V^iV^j\eta_k\eta_l\\
 &=\frac{2^{\frac{5}{3}}}{3}\lvert p\rvert^{-\frac{2}{3}}\left(1+2\theta^2-\frac{2}{3}\langle q, \eta\rangle^2-\frac{16}{3} \theta^2\langle q, \eta\rangle^2\right)\\
&\geq \frac{2^{\frac{5}{3}}}{3}\lvert p\rvert^{-\frac{2}{3}}\left(1+2\theta^2-\frac{2}{3}(1-\theta^2)-\frac{16}{3} \theta^2(1-\theta^2)\right)\\
&= \frac{2^{\frac{5}{3}}}{9}\lvert p\rvert^{-\frac{2}{3}}\left(1-8\theta^2+16\theta^4\right).
\end{align*}
This expression attains the minimum value of zero for $\theta\in[0, 1]$ at $\theta=1/2$, hence we see that $c$ satisfies~\eqref{A3w}.

Next, by the calculation above, we see that if we choose $p\in\R^n\setminus \{0\}$ and $\langle V, \eta\rangle=0$ with $\lvert V\rvert=\lvert \eta\rvert=1$ so that $\theta=\pm\frac{1}{2}$ and $\langle q, \eta\rangle=\sqrt{1-\frac{1}{4}}=\frac{\sqrt{3}}{2}$ (for example, $V=e_1$, $\eta=e_2$ and $p=\frac{1}{2}e_1+\frac{\sqrt{3}}{2}e_2$), then we actually have 
\begin{align*}
 D^2_{p_kp_l}A_{ij}(x, p)V^iV^j\eta_k\eta_l=0,
\end{align*}
hence $c$ does not satisfy~\eqref{A3s}.

Finally, we show that $c$ does not satisfy~\eqref{NNCC}. Consider $V=\lambda\eta$ with $\lvert\eta\rvert=1$ and any $\lambda \neq0$. Then for any $p$ that is a nonzero multiple of $\eta$,
\begin{align*}
 & D^2_{p_kp_l}A_{ij}(x, p)V^iV^j\eta_k\eta_l\\
 &=\frac{2^{\frac{5}{3}}}{3}\lvert p\rvert^{-\frac{2}{3}}\lambda^2\left[1+2-\frac{2}{3}-\frac{16}{3}-6+8\right]\\
 %&=\frac{2^{\frac{5}{3}}}{3}\lvert p\rvert^{-\frac{2}{3}}\lambda^2\left(-5+10\langle q, \eta\rangle^2-6\langle q, \eta\rangle^4\right)\\
 &=-\frac{2^{\frac{5}{3}}}{3}\lvert p\rvert^{-\frac{2}{3}}\lambda^2\\
 &<0
\end{align*}
finishing the proof.
\end{proof}
\bibliography{mybiblio}

\def\cprime{$'$}
\begin{thebibliography}{10}

\bibitem{Bre91}
Yann Brenier.
\newblock Polar factorization and monotone rearrangement of vector-valued
  functions.
\newblock {\em Comm. Pure Appl. Math.}, 44(4):375--417, 1991.

\bibitem{Caf90}
Luis~A. Caffarelli.
\newblock A localization property of viscosity solutions to the
  {M}onge-{A}mp\`ere equation and their strict convexity.
\newblock {\em Ann. of Math. (2)}, 131(1):129--134, 1990.

\bibitem{Caf92a}
Luis~A. Caffarelli.
\newblock Boundary regularity of maps with convex potentials.
\newblock {\em Comm. Pure Appl. Math.}, 45(9):1141--1151, 1992.

\bibitem{Caf92}
Luis~A. Caffarelli.
\newblock The regularity of mappings with a convex potential.
\newblock {\em J. Amer. Math. Soc.}, 5(1):99--104, 1992.

\bibitem{Caf93}
Luis~A. Caffarelli.
\newblock A note on the degeneracy of convex solutions to {M}onge {A}mp\`ere
  equation.
\newblock {\em Comm. Partial Differential Equations}, 18(7-8):1213--1217, 1993.

\bibitem{Caf96}
Luis~A. Caffarelli.
\newblock Allocation maps with general cost functions.
\newblock In {\em Partial differential equations and applications}, volume 177
  of {\em Lecture Notes in Pure and Appl. Math.}, pages 29--35. Dekker, New
  York, 1996.

\bibitem{CY09}
Piermarco Cannarsa and Yifeng Yu.
\newblock Singular dynamics for semiconcave functions.
\newblock {\em J. Eur. Math. Soc. (JEMS)}, 11(5):999--1024, 2009.

%\bibitem{Del04}
%Philippe Delano{\"e}.
%\newblock Gradient rearrangement for diffeomorphisms of a compact manifold.
%\newblock {\em Differential Geom. Appl.}, 20(2):145--165, 2004.

 \bibitem{Del91}
 Philippe Delano{\"e}. 
 \newblock Classical solvability in dimension two of the second boundary-value problem associated with the Monge-Amp{\`e}re operator. 
 \newblock {\em Ann. Inst. H. Poincar\`e} Anal. Non Lin\`eaire, 8(5):443--457,1991.
 
\bibitem{DG10}
Philippe Delano{\"e} and Yuxin Ge.
\newblock Regularity of optimal transport on compact, locally nearly spherical,
  manifolds.
\newblock {\em J. Reine Angew. Math.}, 646:65--115, 2010.

\bibitem{DG11}
Philippe Delano{\"e} and Yuxin Ge.
\newblock Locally nearly spherical surfaces are almost-positively {$c$}-curved.
\newblock {\em Methods Appl. Anal.}, 18(3):269--302, 2011.

\bibitem{FKM09}
Alessio~{Figalli}, Young-Heon {Kim}, and Robert~J. {McCann}.
\newblock {Continuity and injectivity of optimal maps for non-negatively
  cross-curved costs}.
\newblock {\em arXiv:0911.3952}, November 2009.

\bibitem{FKM10}
Alessio~{Figalli}, Young-Heon {Kim}, and Robert~J. {McCann}.
\newblock {Regularity of optimal transport maps on multiple products of spheres}.
\newblock {\em arXiv:1006.1957}, June 2010. To appear in {\em J. Eur. Math. Soc. (JEMS).}

\bibitem{FKM11}
Alessio~{Figalli}, Young-Heon {Kim}, and Robert~J. {McCann}.
\newblock {H{\"o}lder continuity and injectivity of optimal maps}.
\newblock {\em arXiv:1107.1014}, July 2011.

\bibitem{FRV12}
Alessio~Figalli, Ludovic~Rifford, and C{\'e}dric~Villani.
\newblock Nearly round spheres look convex.
\newblock {\em Amer. J. Math.}, 134(1):109--139, 2012.

\bibitem{FR09}
Alessio Figalli and Ludovic Rifford.
\newblock Continuity of optimal transport maps and convexity of injectivity
  domains on small deformations of {$\Bbb S^2$}.
\newblock {\em Comm. Pure Appl. Math.}, 62(12):1670--1706, 2009.

%\bibitem{FR10}
%Alessio Figalli and Ludovic Rifford.
%\newblock Mass transportation on sub-{R}iemannian manifolds.
%\newblock {\em Geom. Funct. Anal.}, 20(1):124--159, 2010.

\bibitem{FRV10}
Alessio Figalli, Ludovic Rifford, and C{\'e}dric Villani.
\newblock On the {M}a-{T}rudinger-{W}ang curvature on surfaces.
\newblock {\em Calc. Var. Partial Differential Equations}, 39(3-4):307--332,
  2010.

\bibitem{FRV11}
Alessio Figalli, Ludovic Rifford, and C{\'e}dric Villani.
\newblock Necessary and sufficient conditions for continuity of optimal
  transport maps on {R}iemannian manifolds.
\newblock {\em Tohoku Math. J. (2)}, 63(4):855--876, 2011.

\bibitem{GM96}
Wilfrid Gangbo and Robert~J. McCann.
\newblock The geometry of optimal transportation.
\newblock {\em Acta Math.}, 177(2):113--161, 1996.

\bibitem{KM10}
Young-Heon Kim and Robert~J. McCann.
\newblock Continuity, curvature, and the general covariance of optimal
  transportation.
\newblock {\em J. Eur. Math. Soc. (JEMS)}, 12(4):1009--1040, 2010.

\bibitem{KM12}
Young-Heon Kim and Robert~J. McCann.
\newblock Towards the smoothness of optimal maps on Riemannian submersions and Riemannian products (of round spheres in particular). 
\newblock{\em J. Reine Angew. Math.}, 664:1--27, 2012.

\bibitem{Kit12}
Jun Kitagawa.
\newblock {An iterative scheme for solving the optimal transportation problem}.
\newblock {\em arXiv:1208.5172}, August 2012.

\bibitem{Liu09}
Jiakun Liu.
\newblock H\"older regularity of optimal mappings in optimal transportation.
\newblock {\em Calc. Var. Partial Differential Equations}, 34(4):435--451,
  2009.

\bibitem{LT10}
Jiakun Liu and Neil~S. Trudinger.
\newblock On {P}ogorelov estimates for {M}onge-{A}mp\`ere type equations.
\newblock {\em Discrete Contin. Dyn. Syst.}, 28(3):1121--1135, 2010.

\bibitem{LTW10}
Jiakun Liu, Neil~S. Trudinger, and Xu-Jia Wang.
\newblock Interior {$C^{2,\alpha}$} regularity for potential functions in
  optimal transportation.
\newblock {\em Comm. Partial Differential Equations}, 35(1):165--184, 2010.

\bibitem{Loe09}
Gr{\'e}goire Loeper.
\newblock On the regularity of solutions of optimal transportation problems.
\newblock {\em Acta Math.}, 202(2):241--283, 2009.

\bibitem{Loe10}
Gr{\'e}goire Loeper.
\newblock Regularity of optimal maps on the sphere: the quadratic cost and the
  reflector antenna.
\newblock {\em Arch. Ration. Mech. Anal.}, 199(1):269--289, 2011.

\bibitem{LV10}
Gr{\'e}goire Loeper and C{\'e}dric Villani.
\newblock Regularity of optimal transport in curved geometry: the nonfocal
  case.
\newblock {\em Duke Math. J.}, 151(3):431--485, 2010.

\bibitem{MTW05}
Xi-Nan Ma, Neil~S. Trudinger, and Xu-Jia Wang.
\newblock Regularity of potential functions of the optimal transportation
  problem.
\newblock {\em Arch. Ration. Mech. Anal.}, 177(2):151--183, 2005.

\bibitem{McC01}
Robert~J. McCann.
\newblock Polar factorization of maps on {R}iemannian manifolds.
\newblock {\em Geom. Funct. Anal.}, 11(3):589--608, 2001.

\bibitem{Trudinger}
Neil~S. Trudinger. 
\newblock Personal communication.

\bibitem{TW09a}
Neil~S. Trudinger and Xu-Jia Wang.
\newblock On strict convexity and continuous differentiability of potential
  functions in optimal transportation.
\newblock {\em Arch. Ration. Mech. Anal.}, 192(3):403--418, 2009.

\bibitem{TW09}
Neil~S. Trudinger and Xu-Jia Wang.
\newblock On the second boundary value problem for {M}onge-{A}mp\`ere type
  equations and optimal transportation.
\newblock {\em Ann. Sc. Norm. Super. Pisa Cl. Sci. (5)}, 8(1):143--174, 2009.

\bibitem{Urb97}
John Urbas.
\newblock On the second boundary value problem for equations of
  {M}onge-{A}mp\`ere type.
\newblock {\em J. Reine Angew. Math.}, 487:115--124, 1997.

\bibitem{Vil08}
C{\'e}dric Villani.
\newblock Stability of a 4th-order curvature condition arising in optimal
  transport theory.
\newblock {\em J. Funct. Anal.}, 255(9):2683--2708, 2008.

\bibitem{Vil09}
C{\'e}dric Villani.
\newblock {\em Optimal transport: Old and new}, volume 338 of {\em Grundlehren
  der Mathematischen Wissenschaften [Fundamental Principles of Mathematical
  Sciences]}.
\newblock Springer-Verlag, Berlin, 2009.

\bibitem{Wan95}
Xu-Jia Wang.
\newblock Some counterexamples to the regularity of {M}onge-{A}mp\`ere
  equations.
\newblock {\em Proc. Amer. Math. Soc.}, 123(3):841--845, 1995.

\end{thebibliography}
\bibliographystyle{plain}
\end{document}